\documentclass{elsarticle}[12pt]

\usepackage{amsmath}
\usepackage{amsfonts}
\usepackage{amssymb}
\usepackage{amsthm}
\usepackage{amscd}
\usepackage{enumerate}
\usepackage{hyperref}
\usepackage{appendix}
\usepackage{comment}

\newtheorem{thm}{Theorem}[section]
\newtheorem{lem}[thm]{Lemma}
\newtheorem{cor}[thm]{Corollary}
\newtheorem{prop}[thm]{Proposition}

\usepackage{todonotes}
\usepackage{xcolor}
\usepackage[normalem]{ulem}
\usepackage{cancel}

\theoremstyle{definition}

\newtheorem{definition}[thm]{Definition}
\theoremstyle{remark}

\numberwithin{equation}{section}

\newcommand{\bb}{\mathbb}

\newcommand{\mrm}{\mathrm}
\newcommand{\mfk}{\mathfrak}
\newcommand{\mcal}{\mathcal}
\newcommand{\gnk}{\Gamma_{n,k}}

\journal{Discrete Mathematics}


\begin{document}

\begin{frontmatter}

\title{Hilton-Milner Theorem for the $r$-independent sets in a union of cliques}


\author{Karen Gunderson\fnref{KGNSERC}} 
\ead{Karen.Gunderson@umanitoba.ca}

\affiliation{organization={Department of Mathematics},
            addressline={University of Manitoba}, 
            city={Winnipeg},
            postcode={R3T-2N2}, 
            state={Manitoba},
            country={Canada}}
\fntext[KGNSERC]{Research supported in part by an NSERC Discovery Research Grant,   Application No.: RGPIN-2024-06029.}
    
\author{Karen Meagher\fnref{KMNSERC}}
\ead{karen.meagher@uregina.ca}
\affiliation{organization={Department of Mathematics and Statistics},
            addressline={University of Regina}, 
            city={Regina},
            postcode={S4S-0A2}, 
            state={Saskatchewan},
            country={Canada}}
\fntext[KMNSERC]{Research supported in part by an NSERC Discovery Research Grant,   Application No.: RGPIN-2025-04101.}
    
\author{Joy Morris\fnref{JMNSERC}}
\ead{joy.morris@uleth.ca}
\affiliation{organization={Department of Mathematics and Computer Science},
            addressline={University of Lethbridge}, 
            city={Lethbridge},
            postcode={T1K-3M4}, 
            state={Alberta},
            country={Canada}}

\fntext[JMNSERC]{Research supported in part by an NSERC Discovery Research Grant,   Application No.: RGPIN-2024-04013.}

\author{Venkata Raghu Tej Pantangi}
\ead{pvrt1990@gmail.com}



\begin{abstract}  
We give a Hilton-Milner Theorem for the $r$-independent sets in the graph that is the union of copies of $K_k$. That is, we determine the maximum intersecting families of $r$-independent sets in this graph, subject to the condition that the sets in a family do not all share a common element. As a by-product, we also find a tight upper bound for the sum of sizes of a pair of cross intersecting families made up of the same objects.  

We apply our theorem to find the largest intersecting family of $r$-independent sets in a family of graphs called ``depth-two claws". This confirms the Holroyd--Talbot conjecture for depth-two claws, extending previous results on these graphs (which covered cases where $r$ was relatively small compared to the number of vertices) to all possible values of $r$.
\end{abstract}

\begin{keyword}
Erd\H{o}s-Ko-Rado Theorem \sep Hilton-Milner Theorem \sep Intersecting Families of Independent Sets \sep Cross-Intersecting Families \sep Holroyd-Talbot conjecture \sep Depth-2 Claws

 \MSC  05C35 \sep 05C69 \sep 05D05 
\end{keyword}

\end{frontmatter}


\section{Introduction}
\label{sec:Intro}

An $r$-independent set in a graph is an independent set of size $r$. A family of $r$-independent sets is said to be \textsl{intersecting} if every pair of sets in the family has non-empty intersection. A family consisting of all $r$-independent sets in a graph that contain a fixed vertex is said to be \textsl{canonically intersecting}. A graph $G$ is said to have the \textsl{$r$-EKR property}, or equivalently to be \textsl{$r$-EKR}, if the maximum size for a family of intersecting $r$-independent sets is the size of a canonically intersecting set.  A graph is said to have the \textsl{strict $r$-EKR property} if the \emph{only} maximum-size intersecting families of $r$-independent sets are canonically intersecting. These names derive from the well-known Erd\H{o}s-Ko-Rado Theorem which is equivalent to the statement that the empty graph on $n$ vertices has the $r$-EKR property for $2r \leq n$, and the strict EKR property if $2r < n$.

There is a 20 year-old conjecture that predicts that many graphs have the EKR property. Letting $\mu(G)$ denote the size of a smallest maximal independent set in a graph $G$, Holroyd and Talbot~\cite{HolroydTalbot} conjectured that for any graph $G$, if $r < \mu(G)/2$, then $G$ is $r$-EKR.

Define $\gnk = \cup_{i=1}^n K_k$ to be the graph that is the disjoint union of $n$ copies of a complete graph on $k$ vertices.  In $\gnk$, any $r$-independent set  is comprised of one vertex taken from each of exactly $r$ of the copies of $K_k$. Denote the collection of all $r$-independent sets in $\gnk$ by $\mcal{I}_{n,k}^r$. Then $| \mcal{I}_{n,k}^r | =\binom{n}{r}  k^r$.

In this paper, we consider maximal families of intersecting $r$-independent sets in $\gnk$.  In the case $r = n$, Berge~\cite{cB74} and Livingston~\cite{mL79} showed that $\gnk$ is $r$-EKR, and strictly $r$-EKR, except in the case $k = 2$.
Meyer~\cite{jM74} stated a generalization to any $r \leq n$, which was proved by Deza and Frankl~\cite{DF22years}: $\gnk$ is $r$-EKR. This means, if $\mathcal{F} \subseteq \mcal{I}_{n,k}^r$ is intersecting, then 
\[
| \mathcal{F} | \leq \binom{n-1}{r-1} k^{r-1},
\]
since this is the number of $r$-independent sets that contain a given vertex.

Further, Deza and Frankl~\cite{DF22years} proved that, except when $r= n$ and $k = 2$, $\gnk$ is strictly $r$-EKR.  Holroyd, Spencer, and Talbot~\cite{HST05} further generalized this result to a setting where the cliques are of different sizes.

In this paper we consider the natural version of the Hilton-Milner Theorem for the $r$-independent sets in $\gnk$: determining the size of the largest collection $\mathcal{F}  \subseteq \mcal{I}_{n,k}^r$ that is intersecting, and additionally has $\cap \mcal{F}  = \emptyset$ (that is, the intersection of all the sets in $\mcal{F}$ is empty, so the sets in $\mcal{F} $ do not all contain a common element).
We then show that this result can be used extend the range of $r$ for which a related graph known as a ``depth-two claw" is known to be $r$-EKR, which was studied by Feghali, Johnson, and Thomas~\cite{claws}.

We now present history about the original Erd\H{o}s-Ko-Rado and Hilton-Milner Theorems, and related results; after this background we will outline the structure of the rest of the paper. 

\section{Background}

The Erd\H{o}s-Ko-Rado (EKR) Theorem  is a cornerstone result in combinatorics and extremal set theory. It was originally proven in 1961~\cite{EKR1961}, and over the years there have been many generalizations, extensions, and applications. Following standard notation, we use $[n]$ to denote $\{1, \ldots, n\}$ and for any integers $a \leq b$, denote $[a, b] = \{i \in \bb{Z}\ :\ a\leq i \leq b\}$.  For any set $A$ and $r \geq 0$, let $\binom{A}{r}$ denote the collection of all $r$-sets from $A$, and $\binom{A}{\leq r}$ the collection of subsets of size no more than $r$.
We say a family $\mathcal{B} \subseteq \binom{[n]}{r}$ is \textsl{intersecting} if $B_1 \cap B_2 \neq \emptyset$ for all $B_1, B_2 \in \mathcal{B}$. The collection of all $r$-sets that contain a fixed element is clearly an intersecting set of size $\binom{n-1}{r-1}$---this is called \textsl{a canonically intersecting set}.
We denote the intersection of all sets in $\mathcal{B}$ by $\cap \mathcal{B} = \cap_{B \in \mathcal{B}} B$; if $\cap \mathcal{B} \neq \emptyset$, then $\mathcal{B}$ is a subset of a canonically intersecting set.

Two collections of $r$-sets are \textsl{isomorphic} if one family can be produced from the other by a relabelling of the elements in $[n]$, so all canonically intersecting sets are isomorphic. If families $\mathcal{F}_1$ and $\mathcal{F}_2$ are isomorphic, then we write $\mathcal{F}_1 \cong \mathcal{F}_2$.

\begin{thm}[Erd\H{o}s-Ko-Rado (EKR) Theorem~\cite{EKR1961}]\label{thm:EKRsets}
Let $n \geq 2r$, and $\mathcal{B} \subseteq \binom{[n]}{r}$. If $\mathcal{B}$ is intersecting, then $|\mathcal{B}| \leq \binom{n-1}{r-1}$. Moreover, if $n>2r$, then $|\mathcal{B}| = \binom{n-1}{r-1}$ if and only if $\mathcal{B}$ is a canonically intersecting set.
\end{thm}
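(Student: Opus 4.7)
The plan is to prove both parts of the theorem via Katona's cycle method. I will call a \emph{cyclic ordering} of $[n]$ a circular arrangement of its elements (there are $(n-1)!$ of these), and for a fixed cyclic ordering $\sigma$, an \emph{arc} of length $r$ will be a set of $r$ cyclically consecutive elements of $\sigma$; every cyclic ordering has exactly $n$ such arcs.

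The key local step is the following lemma: for $n \geq 2r$, any intersecting family of arcs of length $r$ inside a single cyclic ordering has size at most $r$. To prove it, I would fix one arc $A$ in the family and observe that the arcs meeting $A$ other than $A$ itself are exactly the $2(r-1)$ shifts of $A$ by $\pm 1, \ldots, \pm(r-1)$ positions. I would then pair the forward shift by $j$ with the backward shift by $r-j$, for $j=1,\ldots,r-1$: within each pair, the two arcs' starting positions are exactly $n-r$ apart, and since $n - r \geq r$ the two arcs are disjoint. So at most one arc from each of the $r-1$ pairs can lie in the intersecting family, giving at most $r$ arcs in total.

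The global bound follows from a standard double count of incidence pairs $(\sigma, B)$ where $\sigma$ is a cyclic ordering of $[n]$ and $B \in \mathcal{B}$ is an arc of $\sigma$. Counting by $\sigma$ first and applying the lemma bounds the total by $(n-1)!\cdot r$, while counting by $B$ first gives exactly $|\mathcal{B}|\cdot r!(n-r)!$, since any fixed $r$-set $B$ sits as an arc in exactly $r!(n-r)!$ cyclic orderings. Rearranging yields $|\mathcal{B}| \leq \binom{n-1}{r-1}$.

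For the strict EKR conclusion when $n > 2r$, equality forces every cyclic ordering to contribute exactly $r$ arcs of $\mathcal{B}$ and, in particular, to pick one arc from every disjoint pair in the lemma's pairing. I would first argue that a family of $r$ pairwise intersecting arcs in a single cyclic ordering must in fact consist of $r$ cyclically \emph{consecutive} arcs, which share a unique common position. The main obstacle is then lifting this local ``common position'' data to a \emph{global} common element for $\mathcal{B}$; I would compare two cyclic orderings differing only by a transposition of two elements, and use the slack $n > 2r$ to show the local common positions in the two orderings must identify the same element of $[n]$. Iterating this comparison across all cyclic orderings forces $\cap\mathcal{B}\neq\emptyset$, so $\mathcal{B}$ is canonically intersecting.
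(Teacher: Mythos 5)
The paper does not actually prove Theorem~\ref{thm:EKRsets}; it is quoted as classical background with a citation to \cite{EKR1961}, so there is no in-paper argument to measure yours against. Taken on its own terms, your proof of the inequality is Katona's cycle method, correctly executed: under $n\ge 2r$ the arcs meeting a fixed arc $A$ are exactly the $2(r-1)$ shifts of $A$ by $\pm 1,\dots,\pm(r-1)$; the forward shift by $j$ and the backward shift by $r-j$ have starting points $n-r$ apart and are therefore disjoint precisely because $r\le n-r$; and the double count $|\mathcal{B}|\cdot r!\,(n-r)!\le r\cdot(n-1)!$ gives $|\mathcal{B}|\le\binom{n-1}{r-1}$. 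That half is complete and correct.

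The uniqueness half is only a plan, and its central claim is exactly where the real work lies. You assert that $r$ pairwise intersecting arcs in a single cyclic ordering must be $r$ cyclically consecutive arcs through a common position. Note first that this is \emph{false} at $n=2r$: there each pair $\{F_j, B_{r-j}\}$ consists of complementary arcs, and one checks that \emph{any} transversal of the pairs together with $A$ is pairwise intersecting (e.g.\ for $n=8$, $r=4$ the arcs $\{1,2,3,4\},\{2,3,4,5\},\{7,8,1,2\},\{4,5,6,7\}$ pairwise intersect but have empty common intersection). So any proof must invoke $n>2r$ essentially. Even for $n>2r$ the statement is true but not immediate: when $2r<n<3r-2$ a forward shift by $j_1$ and a backward shift by $r-j_2$ can intersect by wrapping around the cycle even when $j_1>j_2$, so the natural ``all $F$-choices precede all $B$-choices'' argument needs an additional case analysis to rule out non-consecutive transversals. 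The subsequent globalization step --- comparing cyclic orderings that differ by a transposition to show the distinguished common position names the same element of $[n]$ throughout --- is standard but likewise unproven in your sketch. In short: the bound is fully established; the characterization of equality rests on a key lemma that you have named but not proved, and that lemma is precisely where the hypothesis $n>2r$ must enter.
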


One of the earliest generalizations of the EKR Theorem was by Hilton and Milner, who determined the size and structure of the largest intersecting family from $\binom{[n]}{r}$ with the additional condition that the sets in the family do not all contain a common element. Define 
\[
\mathcal{H}_{n,r} = \left \{ B \in \binom{[n]}{r} \, : \, 1 \in B , \ B \cap [2, r+1] \neq \emptyset  \right\} \cup 
\left\{ [2, r+1] \right\}.
\]
This family is formed by taking a canonical intersecting family, adding an additional $r$-set, and removing the sets from the canonical family that don't intersect with the additional set. The sets being removed are the sets that contain $1$ and all of whose other elements are drawn from $[r+2, n]$, so there are $\binom{n-r-1}{r-1}$ of these sets. We call this set the \textsl{HM set}.

\begin{thm}[Hilton-Milner Theorem~\cite{HM}]
\label{thm:HMforSets}
Let $n \geq  2r \geq 4$ and $\mathcal{B} \subseteq \binom{[n]}{r}$. If $\mathcal{B}$ is intersecting and 
$\cap \mathcal{B}  = \emptyset$, then
\[
|\mathcal{B}| \leq \binom{n-1}{r-1} - \binom{n-r-1}{r-1} + 1.
\]
Moreover, if $n >  2r > 6$, then
$|\mathcal{B}| = \binom{n-1}{r-1} - \binom{n-r-1}{r-1} + 1$ if and only if $\mathcal{B}$ is isomorphic to $\mathcal{H}_{n,r}$.
\end{thm}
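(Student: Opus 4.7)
The plan is to use the shifting (compression) method. For $1 \leq i < j \leq n$, define the shift $S_{ij}$ that, for each $B \in \mathcal{B}$ with $j \in B$, $i \notin B$, and $(B \setminus \{j\}) \cup \{i\} \notin \mathcal{B}$, replaces $B$ by $(B \setminus \{j\}) \cup \{i\}$. Shifting preserves $|\mathcal{B}|$ and the intersecting property, and after finitely many applications one obtains a left-shifted family $\mathcal{B}^*$ of the same size as $\mathcal{B}$.

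The key subtlety is that shifting can turn a family with empty total intersection into one with common element $1$, which would break the hypothesis. I would therefore split into two cases. In the \emph{good case}, $\mathcal{B}^*$ still satisfies $\cap \mathcal{B}^* = \emptyset$; then some set in $\mathcal{B}^*$ avoids $1$, and left-shiftedness forces the specific set $[2,r+1]$ into $\mathcal{B}^*$. Write $\mathcal{B}^* = \mathcal{B}^*_1 \sqcup \mathcal{B}^*_0$ according to whether the set contains $1$. Since every element of $\mathcal{B}^*_1$ must meet $[2,r+1]$, we get the crude bound $|\mathcal{B}^*_1| \leq \binom{n-1}{r-1} - \binom{n-r-1}{r-1}$. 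I would then exploit the cross-intersecting condition between $\mathcal{B}^*_0$ and $\mathcal{B}^*_1$: each extra set $B \in \mathcal{B}^*_0 \setminus \{[2,r+1]\}$ must intersect every set in $\mathcal{B}^*_1$, so any such $B$ forces the removal from $\mathcal{B}^*_1$ of at least as many $1$-containing sets as the extras contributed, via a careful injection between ``excluded'' $1$-sets and ``extra'' $0$-sets. Summing yields $|\mathcal{B}^*| \leq \binom{n-1}{r-1} - \binom{n-r-1}{r-1} + 1$.

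In the \emph{bad case}, where at some point a shift $S_{ij}$ first introduces the common element $1$, I would reverse that single shift: only sets that were modified at that step contribute to the difference between the pre-shift and post-shift families, and careful accounting of these modifications together with the intersecting condition yields the same bound on the original $\mathcal{B}$.

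For the uniqueness statement when $n > 2r > 6$, I would track equality through each step. Equality in the two-part count in the good case forces $\mathcal{B}^*_1$ to consist of all $1$-containing sets that meet $[2,r+1]$ and $\mathcal{B}^*_0 = \{[2,r+1]\}$, so $\mathcal{B}^* = \mathcal{H}_{n,r}$. The strict EKR part of Theorem~\ref{thm:EKRsets} lets me argue that shifts applied to extremal families are ``reversible up to relabelling,'' so the original $\mathcal{B}$ is isomorphic to $\mathcal{H}_{n,r}$. The main obstacle I anticipate is the delicate balancing argument between $\mathcal{B}^*_0$ and $\mathcal{B}^*_1$ (which is where the Hilton--Milner bound sharply beats the naive EKR-style bound on $\mathcal{B}^*_0$), together with the bookkeeping needed to propagate uniqueness back through the shifting steps and to dispatch the shift-creates-common-element case cleanly.
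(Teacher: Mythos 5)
First, a point of context: the paper does not prove Theorem~\ref{thm:HMforSets} at all --- it is quoted from Hilton and Milner~\cite{HM} as background --- so there is no internal proof to compare against. Judged on its own, your outline follows a recognizable and viable route (shifting, then a cross-intersecting decomposition at the element $1$), which is how several published short proofs of this theorem do proceed; but as written it has genuine gaps at exactly the points where the theorem is hard.

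The central gap is the ``careful injection'' in your good case. After writing $\mathcal{B}^* = \mathcal{B}^*_0 \sqcup \mathcal{B}^*_1$, the relevant pair is $\{B \setminus \{1\} : B \in \mathcal{B}^*_1\} \subseteq \binom{[2,n]}{r-1}$ versus $\mathcal{B}^*_0 \subseteq \binom{[2,n]}{r}$: these are cross-intersecting families of \emph{different} uniformities, and the claim you need --- that every set in $\mathcal{B}^*_0$ beyond $[2,r+1]$ costs at least one further set from $\mathcal{B}^*_1$ --- is precisely the unequal-uniformity generalization of Theorem~\ref{thm:hmci} due to Frankl and Tokushige, whose proof requires Kruskal--Katona or an explicit shadow argument. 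You have named the conclusion of that lemma rather than proved it; the difficulty is that the $1$-containing sets excluded by different members of $\mathcal{B}^*_0$ overlap heavily, so the naive one-for-one count does not close. Two further steps also need repair. The ``bad case'' is only asserted to work: the correct handling is to observe that if $S_{ij}$ produces a star at $i$ then every pre-shift set contains $i$ or contains $j$, and then to run a cross-intersecting bound on that decomposition; ``reverse the shift and account carefully'' is not yet an argument, and the resulting bound is not obviously below the Hilton--Milner value without using non-triviality again. Finally, for uniqueness, the claim that shifts applied to extremal families are ``reversible up to relabelling'' is false in general (shifting is many-to-one on isomorphism classes); one must instead track equality through each inequality and show directly that any family attaining the bound whose image under a single shift is isomorphic to $\mathcal{H}_{n,r}$ is itself isomorphic to $\mathcal{H}_{n,r}$. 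The present paper explicitly warns that vagueness at precisely these compression steps has produced errors in the literature, so these are not pedantic omissions.
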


This theorem shows that there is a significant ``jump" down in size from a canonically intersecting family, to the largest intersecting family that is not contained in a canonically intersecting family; it is this gap that we will be exploiting in our application.

We will also need to consider questions related to ``cross-intersecting families," which we now define. A pair $(\mcal{A}, \mcal{B})$ of non-empty families of non-empty subsets of $[n]$ is said to be \textsl{cross-intersecting} if for all $A \in \mcal{A}$ and $B \in \mcal{B}$ it follows that $A \cap B \neq \emptyset$. We say two cross-intersecting families are \textsl{isomorphic} if one family can be obtained from the other, via a relabelling operation on $[n]$. The following analogue of Theorem~\ref{thm:EKRsets} (the EKR Theorem) was obtained by Hilton and Milner~\cite{HM} and rediscovered by Simpson in~\cite{SIMPSON1993}. It shows that cross-intersecting families are typically maximised by taking one of $\mcal{A}$ or $\mcal{B}$ to consist of a single set, and the other to consist of all sets that intersect nontrivially with that single set.

\begin{thm}[Hilton--Milner~\cite{HM}, Simpson~\cite{SIMPSON1993}]\label{thm:hmci}
Let $r \geq 1$ and $n\geq 2r$.
If $(\mcal{A}, \mcal{B})$ is a cross-intersecting pair in $\binom{[n]}{r}$ with $\mcal{A}, \mcal{B} \neq \emptyset$, then
\[
|\mcal{A}| + |\mcal{B}| \leq 1+  \binom{n}{r}- \binom{n-r}{r}.
\] 
For $n > 2r$, equality holds if and only if  $(\mcal{A}, \mcal{B})$ is isomorphic to 
\[
\left(\{[r]\},\ \{X \in \binom{[n]}{r}\ :\ X \cap [r] \neq \emptyset\} \right).
\] 
\end{thm}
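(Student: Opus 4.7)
The plan is a case split on $m := \min(|\mcal{A}|, |\mcal{B}|)$. For the easy case $m = 1$, we may assume by symmetry that $\mcal{A} = \{A_0\}$. Since every $B \in \mcal{B}$ must meet the fixed $r$-set $A_0$, and the number of $r$-subsets of $[n]$ that meet $A_0$ is precisely $\binom{n}{r} - \binom{n-r}{r}$, we obtain $|\mcal{A}| + |\mcal{B}| \leq 1 + \binom{n}{r} - \binom{n-r}{r}$ directly. Equality here forces $\mcal{B}$ to be the complete family of $r$-sets meeting $A_0$; after relabelling so that $A_0 = [r]$, this is exactly the stated extremal pair.

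The harder case is $m \geq 2$, where I would aim to prove strict inequality, so that together with the $m=1$ analysis, equality for $n > 2r$ forces $m = 1$ and hence yields the stated extremal structure. The main tool is simultaneous left-compression: for $1 \leq i < j \leq n$, the shift operator $S_{ij}$ applied to both $\mcal{A}$ and $\mcal{B}$ preserves cardinalities as well as the cross-intersecting property (a standard fact). After finitely many shifts, both families are left-compressed, so by the lex-initial-segment property both $\mcal{A}$ and $\mcal{B}$ contain the two lex-smallest $r$-sets $[r]$ and $\{1,\dots,r-1,r+1\}$. I would then split each family according to whether or not it contains the element $n$: writing $\mcal{A} = \mcal{A}_0 \sqcup \{A \cup \{n\} : A \in \mcal{A}_1\}$ with $\mcal{A}_0 \subseteq \binom{[n-1]}{r}$ and $\mcal{A}_1 \subseteq \binom{[n-1]}{r-1}$, and similarly for $\mcal{B}$, the cross-intersection property passes down to yield several derived cross-intersecting pairs on the smaller ground set $[n-1]$ (notably pairs of families at different set-sizes). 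An induction on $n$ (or alternatively on $r$) then delivers the required strict inequality, with the base case $r = 1$ being immediate since all nonempty singletons in a cross-intersecting pair must coincide.

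The main obstacle is the equality characterization in the regime $n > 2r$. Compression preserves sizes but not necessarily the ``shape'' of an extremal pair, so one cannot immediately conclude that the original pair is isomorphic to the stated configuration just because the shifted version is. The standard remedy is either to track carefully where the inductive estimate becomes strict, so that equality forces $m = 1$ and we return to the easy case, or to supplement the argument with a stability lemma describing extremal left-compressed cross-intersecting families and then reverse-engineer the sequence of shifts that was applied. Either route constitutes the bulk of the technical work in the proof.
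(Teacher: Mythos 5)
The paper does not actually prove this statement: Theorem~\ref{thm:hmci} is quoted as a known result of Hilton--Milner and Simpson and used as a black box, so there is no in-paper proof to compare against. Judged on its own terms, your $m=1$ case is correct and complete (including the equality analysis), but the $m\geq 2$ case has genuine gaps beyond the ones you flag. First, the induction does not close: after splitting off the element $n$ you obtain cross-intersecting pairs with $\mcal{A}_0\subseteq\binom{[n-1]}{r}$ against $\mcal{B}_1\subseteq\binom{[n-1]}{r-1}$ (and vice versa), i.e.\ pairs of \emph{different} uniformities, which the statement being proved --- and hence your inductive hypothesis --- does not cover. Handling those requires the Frankl--Tokushige cross-uniformity version (proved via Kruskal--Katona), which is a substantial missing ingredient, not a routine detail. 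Second, the strict inequality you aim for when $m\geq 2$ is false at $n=2r$: taking $\mcal{A}=\mcal{B}$ to be any maximum intersecting family in $\binom{[2r]}{r}$ gives $|\mcal{A}|+|\mcal{B}|=\binom{2r}{r}=1+\binom{n}{r}-\binom{n-r}{r}$, so at the natural base of an induction on $n$ your intended dichotomy (``$m\geq 2$ implies strict'') breaks down, and you still owe a proof of the non-strict bound there.

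Two smaller points. The justification ``left-compressed, so by the lex-initial-segment property'' is wrong as stated --- shifted families need not be initial segments of the lexicographic order --- although the particular consequence you use (a shifted family with at least two members contains $[r]$ and $[r-1]\cup\{r+1\}$) is true and can be proved directly by iterating single shifts. Finally, the equality characterization for $n>2r$ is only described as a plan (track where the estimate is strict, or reverse-engineer the shifts); since the entire content of the ``moreover'' clause lives there, the proposal as written establishes the upper bound only in the $m=1$ case and leaves the rest as an outline whose key steps would need the stronger cross-uniformity theorem to carry out.
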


Using the Kruskal--Katona theorem, Frankl and Tokushige~\cite{frankl1992some} generalized Theorem~\ref{thm:hmci} to the case where $\mcal{A} \subseteq \binom{[n]}{r}$ and $\mcal{B} \subseteq \binom{[n]}{s}$ (with $r$ and $s$ not necessarily equal) and a recent related result of Borg and Feghali~\cite{borg2022maximum} solves the problem for $\mcal{A} \subseteq \binom{[n]}{\leq r}$ and $\mcal{B} \subseteq \binom{[n]}{\leq s}$.

Theorems about cross-intersecting pairs of families have been used to give a variety of proofs of Theorem~\ref{thm:HMforSets} (see, e.g., \cite{bulavka2025, pF19, frankl1992some, HK18, wu2024stabilities}), so it is not surprising that our technique for proving our main theorem also takes this approach.

In Section~\ref{sect:main} we will present the statements of our main results: Theorem~\ref{thm:main} is our adaptation of the Hilton-Milner Theorem to $\gnk$, while Theorem~\ref{thm:aux} is a new version of the Hilton-Milner theorem on cross-intersecting sets in the context of $\gnk$. We require the latter for our proof of the former, but believe that Theorem~\ref{thm:aux} is also of interest in its own right. The other result in this section (Lemma~\ref{r=3}) shows that the bound $r \ge 4$ in the statement of Theorem~\ref{thm:main} is best possible. Section~\ref{sec:compandpro} gives a thorough explanation of the operations of compression and projection that we will be using in our proofs. Many previous EKR- or HM-type results have been been vague about the details of their compression operations and this has led to errors.
Accordingly, we take care to lay out our operations and prove results about them with rigour. 

Section~\ref{sec:crsets} proves results about intersecting and cross-intersecting families in the generalised situation where the sets have size at most $r$, rather than necessarily having size $r$. Some of these results are careful re-proofs of results that appear in other papers, but in which details were omitted or the original statements do not make it clear that they apply to our circumstances. Along similar lines, Section~\ref{sect:ApplyHM} is devoted to a full proof of Lemma~\ref{lem:ApplyHM}, although this lemma is essentially a restatement of results that have appeared elsewhere. These results will be required since the compression operation we employ can reduce the size of a set.  Section~\ref{sec:crintsets} contains the proof of Theorem~\ref{thm:aux}, while Section~\ref{sect:mainproof} proves Theorem~\ref{thm:main}. In Section~\ref{sect:app} we apply our main theorem to the situation of depth-two claws. Section~\ref{sec:Further} concludes the paper by outlining possible future work.


\section{Main results}\label{sect:main}

Let $\gnk = K_k^{(1)} \cup K_k^{(2)} \cup \dots \cup K_k^{(n)}$ be a union of $n$ disjoint copies of $K_k$ and label the vertices in $\gnk$ by a tuple $(i,j)$ where $i \in [n]$ represents the copy of $K_k$ that contains the vertex, and $j \in [k]$ represents the specific vertex in $K_k^{(i)}$. Recall that $ \mcal{I}_{n,k}^r $ is the collection of all $r$-independent sets in $\gnk$, and define $\mathcal{E}_{n,k}^r = \{ F \in  \mcal{I}_{n,k}^r \, : \, (1,1) \in F \}$, so 
$|\mathcal{E}_{n,k}^r| = \binom{n-1}{ r -1} k^{r-1}$.
This is called the \textsl{Erd\H{o}s-Ko-Rado family} and Deza and Frankl~\cite[Theorem 5.2]{DF22years} showed that this is a largest family of intersecting $r$-independent sets in $\gnk$. Furthermore, they showed that, except in the case $k = 2$ and $r = n$, any other maximum family of intersecting $r$-independent sets is isomorphic to the Erd\H{o}s-Ko-Rado family.

We consider the largest intersecting families of $r$-independent sets that do not all contain a fixed vertex.  In the case $r \leq n-1$, define $$H = \{ (2,1), (3,1), \dots, (r+1,1) \} = [2, r+1] \times \{1\} \in \mcal{I}_{n,k}^r;$$ this is an $r$-independent set that is not in $\mathcal{E}_{n,k}^r$. Define the \textsl{Hilton--Milner family} (HM family) to be $H$ together with all sets in $\mcal{E}_{n,k}^r$ that intersect $H$:
\begin{equation}\label{eq:HM-family-in-clique-union}
\mathcal{H}_{n,k}^r = \{ F \in \mathcal{E}_{n,k}^r  \, : \, F \cap H \neq \emptyset \} \cup \{H\}.
\end{equation}
The size of $\mathcal{H}_{n,k}^r$ can be determined by counting the number of sets that must be removed from $\mathcal{E}_{n,k}^r$ because they do not intersect with $H$. In these sets, any vertex of the form $(a,b)$ with $a \in [2, r+1]$ must have $b \neq 1$. 
We can count these sets by counting the sets with exactly $j$ vertices of the form $(a,b)$ with $a \in [2, r+1]$ and $b \neq 1$: there are $\binom{r}{j}$ ways to choose the values for $a$, $(k-1)^j$ ways to choose the values for $b$, and then the remaining elements of the set can be chosen in $\binom{n-r-1}{r-j-1}k^{r-j-1}$ ways. So:
\[
| \mathcal{H}_{n,k}^r | =  \binom{n-1}{ r -1} k^{r-1} - \sum_{j=0}^{r-1} \binom{r}{j} (k-1)^j  \binom{n-r-1}{r-j-1} k^{r-j-1} +1.\]
Applying the Vandermonde binomial identity gives 
\[
\binom{n-1}{r-1}k^{r-1}=\left(\sum_{j=0}^{r-1}\binom{r}{j}\binom{n-r-1}{r-j-1}\right)k^{r-1}=\sum_{j=0}^{r-1}\binom{r}{j}\binom{n-r-1}{r-j-1}k^j k^{r-j-1},\]
and so 
\begin{align}
	| \mathcal{H}_{n,k}^r |=1 + \sum_{j = 1}^{r-1} \binom{r}{j}\binom{n-r-1}{r-j-1}k^{r-j-1}(k^j - (k-1)^j). \label{eq:card-HM-family}
\end{align}
(The sum could be written to start at $0$, but when $j=0$ we have $k^j-(k-1)^j=0$ so this term contributes nothing.)

The following is our main result. 
\begin{thm}\label{thm:main}
Let $3 \leq r \leq n$ and let $\mathcal{F} \subset \mcal{I}_{n,k}^r$ be an intersecting family with $\cap \mathcal{F} = \emptyset$. Then the following are true.
\begin{enumerate}
\item If $r<n$, then $|\mathcal{F}| \leq |\mathcal{H}_{n,k}^r |$. Moreover, if $r\geq 4$, equality holds if and only if $\mathcal{F}$ is isomorphic to $\mathcal{H}_{n,k}^r$.
\item If $r=n$, then $|\mathcal{F}| \leq k^{n-1} - (k-1)^{n-1} + k-1$. Moreover, if $n \geq 5$, equality holds if and only if $\mathcal{F}$ is isomorphic to 
\begin{multline*}
\{ X \in \mcal{I}_{n, k}^{n} :\ (1, 1) \in X \text{ and }  X \cap \left ( [2, n] \times \{1\} \right) \neq \emptyset\}\\
 \cup \big\{ \{(1, i)\} \cup ([2, n] \times \{1\}):\ i \geq 2  \big\}.
\end{multline*}
\end{enumerate}   
\end{thm}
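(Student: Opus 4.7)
The plan is to reduce $\mathcal{F}$ to a structured form via compression and then apply the cross-intersecting bound of Theorem~\ref{thm:aux}.

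First I would apply the compression and projection operations of Section~\ref{sec:compandpro} together with Lemma~\ref{lem:ApplyHM}, replacing $\mathcal{F}$ by an intersecting family $\mathcal{F}^*$ of the same cardinality that is left-compressed, i.e., prefers the vertex $(i,1)$ in each clique $i$. A subtle point is that compression can destroy the condition $\cap \mathcal{F}^* = \emptyset$; when it does, one traces back directly to show the original $\mathcal{F}$ already satisfies the stated bound. Otherwise assume $\cap \mathcal{F}^* = \emptyset$ and continue with $\mathcal{F}^*$.

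Next, partition $\mathcal{F}^*$ as $\mathcal{A} \cup \mathcal{B}$ where $\mathcal{A} = \{F \in \mathcal{F}^* : (1,1) \in F\}$ and $\mathcal{B} = \mathcal{F}^* \setminus \mathcal{A}$; since $\cap \mathcal{F}^* = \emptyset$, $\mathcal{B}$ is nonempty. Let $\mathcal{A}' = \{F \setminus \{(1,1)\} : F \in \mathcal{A}\}$, a family of $(r-1)$-independent sets in the union of cliques $2,\dots,n$, and split $\mathcal{B} = \mathcal{B}_0 \cup \mathcal{B}_1$ according to whether the set avoids clique $1$ or uses some $(1,j)$ with $j \geq 2$. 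For each $B \in \mathcal{B}_1$, let $B^{\flat} = B \setminus (\{1\} \times [k])$. Then every pair in $\mathcal{A}' \times (\mathcal{B}_0 \cup \mathcal{B}_1^{\flat})$ intersects, so $(\mathcal{A}',\, \mathcal{B}_0 \cup \mathcal{B}_1^{\flat})$ is a cross-intersecting pair living in $\Gamma_{n-1,k}$. Because each element of $\mathcal{B}_1^{\flat}$ arises from at most $k-1$ sets in $\mathcal{B}_1$ (one per choice of $(1,j)$), we get
\[
|\mathcal{F}^*| \leq |\mathcal{A}'| + |\mathcal{B}_0| + (k-1)\,|\mathcal{B}_1^{\flat}|.
\]
Applying Theorem~\ref{thm:aux} to this cross-intersecting pair and doing a careful combinatorial comparison shows the resulting upper bound matches $|\mathcal{H}_{n,k}^r|$ as given by~\eqref{eq:card-HM-family}. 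For the uniqueness statement, the equality case of Theorem~\ref{thm:aux} forces $(\mathcal{A}', \mathcal{B}_0 \cup \mathcal{B}_1^{\flat})$ to be a canonical extremal pair, which lifts back to $\mathcal{F} \cong \mathcal{H}_{n,k}^r$.

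The case $r = n$ is qualitatively different, since every $r$-independent set uses a vertex from every clique and the compression argument degenerates. Here the problem is equivalent to asking about intersecting families of functions $[n] \to [k]$, where ``intersecting'' means agreement on at least one coordinate; a direct analysis identifies the extremal family as the canonical star through $(1,1)$, with the sets disjoint from $[2,n]\times\{1\}$ removed and the $k-1$ ``flipped'' sets $\{(1,i)\} \cup ([2,n]\times\{1\})$ added, giving the bound $k^{n-1} - (k-1)^{n-1} + k - 1$.

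The main obstacle I anticipate is careful bookkeeping in the second step: balancing the contributions of $\mathcal{B}_0$ and $\mathcal{B}_1$ (with the correct factor of $k-1$) against the bound from Theorem~\ref{thm:aux}, and matching the result with the closed form~\eqref{eq:card-HM-family}. A secondary technical hurdle is ensuring compression does not destroy $\cap \mathcal{F}^* = \emptyset$ in a way that resists direct analysis; this is exactly why the compression operations must be formulated carefully and why Lemma~\ref{lem:ApplyHM} is needed, and why the boundary case $r = 3$ must be treated separately, as demonstrated by Lemma~\ref{r=3}.
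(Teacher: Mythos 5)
There is a genuine gap, and it sits exactly where you wave at ``a careful combinatorial comparison.'' The pair $(\mcal{A}',\ \mcal{B}_0 \cup \mcal{B}_1^{\flat})$ you construct is not a cross-intersecting pair of $r$-independent sets in the sense of Theorem~\ref{thm:aux}: $\mcal{A}'$ and $\mcal{B}_1^{\flat}$ are $(r-1)$-uniform while $\mcal{B}_0$ is $r$-uniform, and on top of that your bound $|\mcal{F}^*| \leq |\mcal{A}'| + |\mcal{B}_0| + (k-1)|\mcal{B}_1^{\flat}|$ weights the two pieces of the second family differently. Theorem~\ref{thm:aux} says nothing about maximizing such a weighted, mixed-uniformity functional, and the optimization genuinely depends on that interplay: for $r<n$ the extremum is attained with $\mcal{B}_1 = \emptyset$ and $\mcal{B}_0$ a single $r$-set, whereas for $r=n$ the set $\mcal{B}_0$ is forced empty and the extremum needs $|\mcal{B}_1| = k-1$, which is precisely why the two parts of the theorem have different answers. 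Nothing in your sketch establishes the required inequality against $|\mathcal{H}_{n,k}^r|$, and your uniqueness claim (``the equality case of Theorem~\ref{thm:aux} forces a canonical extremal pair'') cannot be right as stated, since Lemma~\ref{r=3} exhibits a non-isomorphic extremal family at $r=3$; your framework never identifies where the hypothesis $r \geq 4$ enters. The $r=n$ case is likewise asserted (``a direct analysis identifies the extremal family'') rather than proved -- that is a nontrivial Frankl--Deza-type statement, not a remark.

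For comparison, the paper deploys Theorem~\ref{thm:aux} only in the case you dismiss in one sentence: when compression collapses $\cap\mathcal{F}$ to a single vertex, one looks at the \emph{last} compression step, at which point every set contains $(1,1)$ or $(1,2)$, so deleting that vertex yields a genuinely $(r-1)$-uniform cross-intersecting pair in $\mcal{I}_{n-1,k}^{r-1}$ to which Theorem~\ref{thm:aux} applies verbatim (and Equation~\eqref{eq:compare-HvsM} finishes it). When the compressed family still has empty intersection, the paper does \emph{not} decompose by a distinguished vertex at all; it projects to an $r$-maximal intersecting family in $\binom{[n]}{\leq r}$, bounds each $\ell$-uniform layer via Lemma~\ref{lem:ApplyHM}, and handles the layers with $\ell > n/2$ by the pairing Lemmas~\ref{lem:compare-maxInterSets}, \ref{lem:ineq}, and \ref{lem:pairwise-opt} before reassembling with the weights $C^{n,k}_{r,\ell}$ via Lemma~\ref{lem:summing}. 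You would need to either adopt that route or supply an independent proof of the weighted mixed-size optimization; as written the central step does not follow from the tools you cite.
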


Note that in the case $r = n$ and $k =2$, the upper bound gives $k^{n-1} - (k-1)^{n-1} + 1 = 2^{n-1}$, which is the same as the size of the Erd\H{o}s-Ko-Rado family $\mathcal{E}_{n, 2}^n$. The non-uniqueness of maximum intersecting families in this case was noted by Deza and Frankl~\cite{DF22years}.

Note that for $r = 2, n > 2$, the family $\mathcal{H}_{n, k}^2$ has cardinality $3$ and is the unique largest family, up to isomorphism.

\begin{lem}\label{r=3}
If $r=3$ there is an intersecting family with $\cap \mcal{F}  = \emptyset$,
and $|\mathcal{F}| = |\mathcal{H}_{n,k}^r |$ that is not isomorphic to $\mathcal{H}_{n,k}^r$.
\end{lem}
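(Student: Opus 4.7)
The plan is to mimic the classical triangle construction that makes the Hilton--Milner bound non-unique when $r=3$ in the original setting. I would take three vertices in three distinct cliques, set $T_i = (i,1)$ for $i \in \{1,2,3\}$ and $T = \{T_1, T_2, T_3\} \in \mathcal{I}_{n,k}^3$, and then define
$$\mathcal{T} = \{F \in \mathcal{I}_{n,k}^3 : |F \cap T| \geq 2\}.$$

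The first step is to verify that $\mathcal{T}$ is intersecting and that $\cap \mathcal{T} = \emptyset$. Intersectingness is a pigeonhole argument: two members each meet the $3$-set $T$ in at least two positions, so they must share at least one position. For emptiness, $T \in \mathcal{T}$ forces $\cap \mathcal{T} \subseteq T$, and for each $i \in \{1,2,3\}$ I can produce a member of $\mathcal{T}$ missing $T_i$ by taking $\{j,\ell\} = \{1,2,3\} \setminus \{i\}$ and any vertex $X \neq T_i$ lying in a clique outside $\{j,\ell\}$; the set $\{T_j, T_\ell, X\}$ lies in $\mathcal{T}$ and avoids $T_i$.

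Next I would count $|\mathcal{T}|$ by conditioning on $|F \cap T|$. There is exactly one set with $|F \cap T| = 3$. For $|F \cap T| = 2$, the pair from $T$ contributes $\binom{3}{2} = 3$ choices, and the third vertex ranges over the $(n-2)k$ vertices in the cliques disjoint from the chosen pair, minus the one remaining $T$-vertex, giving $(n-2)k - 1$ options. So $|\mathcal{T}| = 1 + 3((n-2)k - 1) = 3k(n-2) - 2$, which is what one obtains for $|\mathcal{H}_{n,k}^3|$ from \eqref{eq:card-HM-family} with $r = 3$.

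Finally, to prove $\mathcal{T} \not\cong \mathcal{H}_{n,k}^3$, I would compare vertex-degree sequences. In $\mathcal{H}_{n,k}^3$ the vertex $(1,1)$ lies in every member except $H$, so some vertex has degree $3k(n-2) - 3$. In $\mathcal{T}$ the maximum degree is achieved at some $T_i$ and equals $1 + 2((n-2)k - 1) = 2k(n-2) - 1$, while vertices outside $T$ appear in at most three members. Since $3k(n-2) - 3 > 2k(n-2) - 1$ whenever $k(n-2) > 2$, the two families have different vertex-degree multisets and cannot be isomorphic under any relabelling of $[n] \times [k]$. No step is a genuine obstacle; the lemma is an explicit verification, and the only delicate point is to note that the inequality above degenerates only in the extreme case $k(n-2) \leq 2$, which falls within the known exceptions of Theorem~\ref{thm:main}.
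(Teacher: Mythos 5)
Your proposal is correct and is essentially the paper's own proof: the same family (all $3$-independent sets meeting $\{(1,1),(2,1),(3,1)\}$ in at least two vertices), the same count $3k(n-2)-2$, and the same vertex-degree comparison to rule out an isomorphism with $\mathcal{H}_{n,k}^3$. The only cosmetic difference is that you spell out the pigeonhole check for intersection and the verification that $\cap\mathcal{T}=\emptyset$, which the paper treats as immediate.
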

\begin{proof}
Let $\mathcal{F} \subset \mcal{I}_{n,k}^3$ be the family of all independent sets in $\mcal{I}_{n,k}^3$ that contain at least two of the vertices 
$\{ (1,1), (2,1), (3,1)\}$. This set is clearly intersecting, and as has size
$$
1 + 3 \left( (k-1) + (n-3)k \right) = 3kn-6k-2,
$$
which is the same size as $\mathcal{H}_{n,k}^3$. 

We now show that $\mcal{F}$ is not isomorphic to $\mcal{H}_{n,k}^{3}$. To do so, we note that $(1,1)$ is an element in $|\mcal{H}_{n,k}^{3}|-1=3 \left( (k-1) + (n-3)k \right)$ of the sets from $\mcal{H}_{n,k}^{3}$. To show $\mcal{F}\not\cong \mcal{H}^{3}_{n,k}$, it suffices to show that no single vertex is contained in $3 \left( (k-1) + (n-3)k \right)$ sets of $\mcal{F}$. We conclude the proof by observing that (i) any vertex in $\{(1,1),\ (2,1),\ (3,1)\}$ occurs in exactly $1+ 2((k-1)+(n-3)k)$ sets of $\mcal{F}$; (ii) a vertex of the form $(x,a)$ with $x \in [3,n]$ and $a \in [k]$, occurs in exactly $3$ sets of $\mcal{F}$; and (iii) a vertex of the form $(y,b)$ with $y \in [3]$ and $b \in [2,k]$, occurs in exactly in $1$ set of $\mcal{F}$. 
\end{proof}

An important tool in the proof of Theorem~\ref{thm:main} is an analogue of Theorem~\ref{thm:hmci} (the Hilton-Milner Theorem on cross-intersecting families) for cross-intersecting families of $r$-independent sets in $\gnk$.  A pair $(\mcal{A},\ \mcal{B})$ of non-empty families of independent sets in $\Gamma_{n,k}$ is said to be \textsl{cross-intersecting} if $A\cap  B \neq \emptyset$, for all $A \in \mcal{A}$ and $B \in \mcal{B}$. For any $r \leq n$, one example of one such pair is 
\begin{equation}\label{eq:HM-cross-intersecting}
\left( 
    \mfk{H}_{n,k}^{r}:=\left\{ [r]\times \{1\} \right\},\ 
    \mfk{M}_{n,k}^{r}:= \left\{  X \in \mcal{I}_{n,k}^{r}\ :\ X\cap ([r] \times \{1\})\neq \emptyset  \right\} 
    \right). 
\end{equation}
We call this the \textsl{Hilton--Milner cross-intersecting pair}. We show the following result on  cross-intersecting pairs in $\mcal{I}^{r}_{n,k}$.

\begin{thm}\label{thm:aux}
Let $n,k,r$ be positive integers with $2\leq r\leq n$. If $(\mcal{A},\ \mcal{B})$ is a cross-intersecting pair of families of $r$-independent sets in $\Gamma_{n,k}$, then  
\[
|\mcal{A}|+ |\mcal{B}| \leq |\mfk{H}_{n,k}^{r}| + |\mfk{M}_{n,k}^{r}|.
\] 
Moreover, when $r\geq 3$, equality holds if and only if $(\mcal{A},\ \mcal{B})\cong (\mfk{H}_{n, k}^r,\ \mfk{M}_{n, k}^r)$.

In the case where $r = 2$, equality holds if and only if either
\begin{enumerate}[(a)]
	\item $(\mcal{A}, \mcal{B}) \cong (\mfk{H}_{n, k}^2, \mfk{M}_{n, k}^2)$ or
	\item $\mcal{A} = \mcal{B} \cong \{X \in \mcal{I}_{n, k}^2:\ (1, 1) \in X\}$.
\end{enumerate}
\end{thm}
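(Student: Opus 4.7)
The plan is to reduce Theorem~\ref{thm:aux} to a (possibly weighted) cross-intersecting bound on families of subsets of $[n]$, using the compression and projection operations developed in Section~\ref{sec:compandpro}. This mirrors the usual strategy for proving cross-intersecting Hilton--Milner results, adapted to the clique-union structure of $\gnk$.

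First I would apply a coordinate compression to $(\mcal{A},\mcal{B})$ that pushes each vertex $(i,j)$ with $j\neq 1$ toward $(i,1)$ whenever the replacement preserves both the $r$-independence of the affected set and the cross-intersecting property of the pair. Routine arguments (analogous to those slated for Section~\ref{sec:compandpro}) show that such a compression preserves $|\mcal{A}|$, $|\mcal{B}|$, and the cross-intersecting condition, and terminates at a stable pair. Since two vertices in the same clique cannot both lie in an independent set, this particular compression never shrinks a set; however the more permissive framework of families of sets of size at most $r$ developed in Section~\ref{sec:crsets} will be needed for the projection step.

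Next I would project each compressed set $X$ to a subset of $[n]$, for instance by taking its trace on the first column, $\tau(X) = \{i \in [n] :\ (i,1)\in X\}$, which has size at most $r$. Stability of the compression forces that every cross-intersection between $\mcal{A}$ and $\mcal{B}$ is witnessed in the first column, so the projected families form a (weighted) cross-intersecting pair in $\binom{[n]}{\leq r}$, where the weight on each subset $S$ counts the number of compressed independent sets whose trace is $S$. Applying the cross-intersecting Hilton--Milner bound for subsets of size at most $r$ from Section~\ref{sec:crsets} (building on Theorem~\ref{thm:hmci}) to the weighted pair, and summing over weight classes, yields $|\mcal{A}| + |\mcal{B}| \leq |\mfk{H}_{n,k}^{r}| + |\mfk{M}_{n,k}^{r}|$. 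For the equality cases, I would trace the extremal configurations through the trace map and through decompression: when $r\geq 3$, Theorem~\ref{thm:hmci} has a unique extremum up to isomorphism, which lifts to the Hilton--Milner pair $(\mfk{H}_{n,k}^{r},\mfk{M}_{n,k}^{r})$; when $r = 2$, the classical bound admits the extra extremum in which both families equal a canonical intersecting family, and this lifts to configuration~(b) of the statement, namely $\mcal{A} = \mcal{B} \cong \{ X \in \mcal{I}_{n,k}^{2}:\ (1,1) \in X\}$.

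The main obstacle I anticipate is verifying that after full compression the trace map truly captures every witness of cross-intersection, so that no information is lost when passing from $\gnk$ down to $[n]$, and simultaneously that the weighted inequality on $\binom{[n]}{\leq r}$ is tight exactly for the configurations listed. The $r = 2$ case is particularly delicate, since the additional canonical extremum does not persist for $r \geq 3$, and the lifting from the trace back to $\mcal{I}_{n,k}^{2}$ must explicitly account for the symmetry $\mcal{A} = \mcal{B}$ that can occur when one side of the pair is no longer forced to be a singleton.
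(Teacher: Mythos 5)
Your overall architecture --- compress to a stable pair, project onto the first column, and convert the problem into a weighted inequality over the uniform levels of a cross-intersecting pair in $\binom{[n]}{\leq r}$ --- is exactly the paper's strategy, and your handling of the extra $r=2$ extremum is on target. But there is a genuine gap at the central step, ``applying the cross-intersecting Hilton--Milner bound \dots and summing over weight classes.'' The levelwise inequality $|\mcal{S}(\ell)|+|\mcal{T}(\ell)| \leq |\mcal{X}_{n,r}(\ell)|+|\mcal{Y}_{n,r}(\ell)|$ that such a summation requires is only available when $2\ell \leq n$: Theorem~\ref{thm:hmci} carries the hypothesis $n \geq 2\ell$, and for $\ell > n/2$ the inequality is simply false in general (an $r$-maximal pair can have $\mcal{S}(\ell)=\mcal{T}(\ell)=\binom{[n]}{\ell}$ at high levels, contributing $2\binom{n}{\ell}$, whereas the target pair contributes only about $\binom{n}{\ell}$ there). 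Since the theorem is claimed for all $2 \leq r \leq n$, including $r=n$, the range $\ell>n/2$ cannot be avoided.

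The missing idea is the complementation pairing used for $n/2 < r \le n$: when $\ell$ and $n-\ell$ are both at most $r$, $r$-maximality forces $X \in \binom{[n]}{\ell}\setminus(\mcal{S}(\ell)\cup\mcal{T}(\ell))$ exactly when $X^{c} \in \mcal{S}(n-\ell)\cap\mcal{T}(n-\ell)$, whence $|\mcal{S}(\ell)|+|\mcal{T}(\ell)|+|\mcal{S}(n-\ell)|+|\mcal{T}(n-\ell)| = 2\binom{n}{\ell}$ is the \emph{same} for every $r$-maximal pair (Lemma~\ref{lem:l-n-l}). Combined with the strict monotonicity of the weights, $C^{n,k}_{r,\ell} > C^{n,k}_{r,n-\ell}$ for $\ell<n/2$ (Lemma~\ref{lem:ineq}), one argues that within each coupled pair of levels the weighted contribution is maximized by pushing mass to the lower level, which is precisely what the extremal pair does; this is the content of Lemmas~\ref{lem:pairwise-opt} and~\ref{lem:summing}, and it is also where the equality analysis for large $r$ actually lives. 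Without this (or some substitute), your outline proves the theorem only for $r\le n/2$. A smaller point: for the weights to depend only on $|S|$ (so that they equal $C^{n,k}_{r,\ell}$) you must first pass to a maximal pair of the form $(\phi^{-1}(\mcal{S}),\phi^{-1}(\mcal{T}))$ as in Corollary~\ref{cor:phi-inverse-phi}; the per-set count of compressed independent sets with a given trace is not a priori constant on a level.
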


Proofs of Theorem~\ref{thm:main} and \ref{thm:aux} are inspired by the proofs in~\cite{DF22years} and~\cite{liao2024hiltonmilnertheoremkmultisets}. A key tool in our proofs is \textsl{set compression}, also known as \textsl{shifting}. We describe this in the following section. We will also need some results about cross-intersecting pairs and intersecting families in $\binom{[n]}{\leq r}$, described  in Section~\ref{sec:crsets}.

\section{Compression and projection.}\label{sec:compandpro}

In this section, we give a number of results on the effects of two operations: \textsl{compression} and \textsl{projection} on families of sets in $\mcal{I}_{n, k}^r$.  

Note that a family $\mathcal{A} \subseteq \mcal{I}_{n, k}^r$ is intersecting if and only if  the pair $(\mcal{A}, \mcal{A})$ is cross-intersecting.  We will generally prove results only for cross-intersecting families and then use the fact that a corresponding result for intersecting families follows immediately.

The original proof of the EKR theorem, in~\cite{EKR1961}, relied on set compression. Different versions of the compression operation have proved useful in other analogues of the EKR theorem and we refer the reader to the survey by Frankl~\cite{frankl1987shifting} on this method.

In the paper by Deza and Frankl~\cite{DF22years}, a compression operation called ``pushing-up'' is used on the independent sets in $\Gamma_{n,k}$, the goal of this operation is to try to replace any element of the $i$-th clique with the element $(i, 1)$.  We use a more granular version of their compression operation.  Given $i \in [n]$, $s \in [2,k]$ and $X \in \mcal{I}^{r}_{n,k}$, we define 
\[
P_{i,s}(X)= 
\begin{cases} 
X \setminus \{(i,s)\} \cup \{(i,1)\} & \text{if $(i,s) \in X$,} \\
X & \text{otherwise.}
\end{cases}
\]

Given a family $\mcal{F}$ of $r$-independent sets, we define 
$$
\pi_{i,s}(\mcal{F})= 
\left\{ P_{i,s}(X)\ :\ X \in \mcal{F} \right\} 
\bigcup 
\left\{X \ :\ X,\ P_{i,s}(X) \in \mcal{F} \right\}.
$$
That is, we replace each set in $\mcal{F}$ with its compressed version, unless that compressed version is already in $\mcal{F}$, in which case we keep both sets.

For completeness, we include a proof that the operation $\pi_{i, s}$ preserves cardinalities of families and intersection properties.

\begin{lem}\label{lem:pipreservescint}
For every $i \in [n]$, $s \in [2,k]$, and $\mcal{A}, \mcal{B} \subseteq \mcal{I}_{n, k}^r$:
\begin{enumerate}[(a)]
\item $\pi_{i, s}(\mcal{A}) \subseteq \mcal{I}_{n, k}^r$ with  $|\pi_{i,s}(\mcal{A})|= |\mcal{A}|$.
\item If $(\mcal{A},\ \mcal{B})$ is a cross-intersecting pair, then so is $\left(\pi_{i,s}(\mcal{A}),\ \pi_{i,s}(\mcal{B}) \right)$.
\end{enumerate}
\end{lem}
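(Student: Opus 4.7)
The plan is to handle the two claims separately, but both rest on a careful case analysis of how a set ends up in $\pi_{i,s}(\mcal{F})$.

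For part (a), I would first observe that $P_{i,s}(X)$ is obtained from $X$ by (at most) swapping $(i,s)$ for $(i,1)$ within the single clique $K_k^{(i)}$. Since this neither changes the size of the set nor introduces two vertices from the same clique, $P_{i,s}(X)\in \mcal{I}_{n,k}^r$ whenever $X$ is. For the cardinality statement, I would partition $\mcal{F}$ into three pieces: the set $A$ of all $X\in\mcal{F}$ with $(i,s)\notin X$; the set $B_1$ of all $X\in\mcal{F}$ with $(i,s)\in X$ and $P_{i,s}(X)\in\mcal{F}$; and the set $B_2$ of all $X\in\mcal{F}$ with $(i,s)\in X$ and $P_{i,s}(X)\notin\mcal{F}$. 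Unpacking the definition of $\pi_{i,s}$ then gives $\pi_{i,s}(\mcal{F})=A\cup B_1\cup P_{i,s}(B_2)$, and the three pieces are pairwise disjoint ($B_1$ contains $(i,s)$ while the other two pieces do not, and $A\cap P_{i,s}(B_2)=\emptyset$ because $P_{i,s}(X)\in A\subseteq\mcal{F}$ would put $X$ in $B_1$ rather than $B_2$). Since $P_{i,s}$ is injective on sets containing $(i,s)$, this gives $|\pi_{i,s}(\mcal{F})|=|A|+|B_1|+|B_2|=|\mcal{F}|$.

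For part (b), I would run the standard shifting argument. Suppose for contradiction that $A'\in\pi_{i,s}(\mcal{A})$ and $B'\in\pi_{i,s}(\mcal{B})$ satisfy $A'\cap B'=\emptyset$. From the description of $\pi_{i,s}$, either $A'\in\mcal{A}$ already, or else $A'\notin\mcal{A}$ but $A'=P_{i,s}(\widetilde{A})$ for some (unique) $\widetilde{A}\in\mcal{A}$ with $(i,s)\in\widetilde{A}$; in the latter case $(i,1)\in A'$. An analogous dichotomy holds for $B'$. If both $A'\in\mcal{A}$ and $B'\in\mcal{B}$, then cross-intersecting is immediately violated. If neither $A'\in\mcal{A}$ nor $B'\in\mcal{B}$, then both contain $(i,1)$, contradicting $A'\cap B'=\emptyset$. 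The only substantive case is the mixed one, say $A'\in\mcal{A}$ and $B'\notin\mcal{B}$, so $\widetilde{B}:=(B'\setminus\{(i,1)\})\cup\{(i,s)\}\in\mcal{B}$. Cross-intersecting applied to $(A',\widetilde{B})$ forces $(i,s)\in A'$, and then the definition of $\pi_{i,s}$ (condition at which $A'\in\mcal{A}$ survives the swap) forces $P_{i,s}(A')\in\mcal{A}$; but a one-line comparison of the two swaps shows $P_{i,s}(A')\cap\widetilde{B}=\emptyset$, contradicting cross-intersecting.

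The main delicate point will be this mixed case: keeping straight which set is the ``original'' and which is the compressed version, and checking that the double swap $(i,s)\leftrightarrow(i,1)$ on the two sides really does cancel to produce a pair of original sets that are disjoint. Everything else is essentially book-keeping once the three-way partition of $\mcal{F}$ in part (a) is in hand.
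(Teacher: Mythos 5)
Your proof is correct and follows essentially the same route as the paper: part (a) is the same observation (the paper simply states it follows from the definition, while you spell out the three-way partition), and part (b) is the standard shifting argument, with your contradiction in the mixed case being the contrapositive of the paper's direct verification that a surviving original set still meets the compressed sets on the other side. No gaps.
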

\begin{proof}
Part (a) follows immediately from the definition.

To prove part (b), assume $(\mcal{A},\ \mcal{B})$ are a cross-intersecting pair and let $A \in \mcal{A}$, $B \in \mcal{B}$ be arbitrary. We first show that $P_{i,s}(A)\cap P_{i,s}(B)\neq \emptyset$. Depending on whether or not $(i,s)\in A\cap B$, we either have $(i, 1) \in P_{i, s}(A) \cap P_{i, s}(B)$, or $A \cap B \subseteq P_{i,s}(A) \cap P_{i,s}(B)$. In either case we are done. 

We also need to show that if $A\neq P_{i,s}(A) \in \mcal{A}$, then $A \cap P_{i,s}(B) \neq \emptyset$. Since $(\mcal A, \mcal B)$ is cross-intersecting, we have $A \cap B \neq \emptyset$ and $P_{i,s}(A)\cap B \neq \emptyset$; together these imply that there is some $(j,t) \in A \cap B$ with $(j,t)\neq (i,s)$. Now $(j,t) \in A \cap P_{i,s}(B)$. An equivalent proof shows that if $B\neq P_{i,s}(B) \in \mcal{B}$, then $B \cap P_{i,s}(A) \neq \emptyset$.
\end{proof}

Note that this lemma also implies if $\mcal{A} \subseteq \mcal{I}_{n,k}^{r} $ is an intersecting family, then $\pi_{i,s}(\mcal{A})$ is also an intersecting family.

A family $\mcal{F} \subseteq \mcal{I}_{n,k}^{r}$ is said to be \textsl{stable} if $\pi_{i,s}(\mcal{F})=\mcal{F}$, for all $i\in[n]$ and $s \in [2,n]$. We observe that $\pi_{i,s}\circ \pi_{i,s}=\pi_{i,s}$ and $\pi_{i,s}\circ \pi_{j,t}=\pi_{j,t}\circ \pi_{i,s}$, for all $i,j \in [n]$ and $s,t \in [2,n]$. It now follows that given $\mcal{F} \subseteq \mcal{I}_{n,k}^{r}$, one can apply a sequence of compression operations to produce a stable family of the same size, $|\mcal{F}|$. A pair of families $(\mathcal{A}, \mathcal{B})$ are said to be stable if both $\mcal{A}$ and $\mcal{B}$ are stable.

The following lemma shows where intersections can be found between sets in a stable family or stable pair. This allows us to transfer our investigation into the realm of subsets of size at most $r$ in $[n]$.

\begin{lem}\label{lem:stableint}
Let $(\mcal{A}, \mcal{B})$ be a stable cross-intersecting pair of families in $\mcal{I}_{n, k}^r$.  For any $A \in \mathcal{A}$ and $B \in \mathcal{B}$, there is some $i \in [n]$ with $(i, 1) \in A \cap B$.
\end{lem}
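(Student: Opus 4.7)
My plan is to prove the lemma by contradiction, using a minimal counterexample. Suppose the conclusion fails, so there exist $A \in \mcal{A}$ and $B \in \mcal{B}$ with $A \cap B \neq \emptyset$ (guaranteed by the cross-intersecting hypothesis) but such that no element of $A \cap B$ has the form $(j,1)$. Among all such bad pairs, choose one minimizing $|A \cap B|$.

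\textbf{Step 1: Unpack stability.} First I would verify the following consequence of stability, which is what makes the argument go: if $\mcal{F}$ is stable and $X \in \mcal{F}$ contains $(i,s)$ with $s \geq 2$, then $P_{i,s}(X) \in \mcal{F}$. Indeed, $P_{i,s}(X) \in \pi_{i,s}(\mcal{F}) = \mcal{F}$ directly from the definition.

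\textbf{Step 2: Apply compression to the minimal bad pair.} Since $A \cap B$ is nonempty and contains no $(j,1)$, pick $(i,s) \in A \cap B$ with $s \geq 2$. Because $A$ is an independent set in $\gnk$ containing $(i,s)$, it cannot contain $(i,1)$; the same reasoning shows $(i,1) \notin B$. By Step 1 and stability of $\mcal{A}$, the set $A' := P_{i,s}(A) = (A \setminus \{(i,s)\}) \cup \{(i,1)\}$ lies in $\mcal{A}$.

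\textbf{Step 3: Derive a contradiction.} Now compute $A' \cap B$. We removed $(i,s)$ from $A$ and added $(i,1)$; since $(i,s) \in B$ but $(i,1) \notin B$, we get $A' \cap B = (A \cap B) \setminus \{(i,s)\}$. This intersection still contains no vertex of the form $(j,1)$ (it is a subset of $A \cap B$), and is strictly smaller than $A \cap B$. By minimality of the counterexample $(A,B)$, the pair $(A',B)$ cannot itself be a bad pair, so we must have $A' \cap B = \emptyset$. But this contradicts the cross-intersecting property of $(\mcal{A}, \mcal{B})$, since $A' \in \mcal{A}$ and $B \in \mcal{B}$.

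The only genuinely delicate step is Step 2: I need the observation that $(i,1) \notin B$, which relies specifically on independence of $B$ combined with $(i,s) \in B$ for $s \geq 2$—so the whole argument does really use that we are working with independent sets in $\gnk$, not arbitrary subsets. Everything else is bookkeeping, and I do not expect any real obstacle beyond being careful to state the stability consequence in Step 1 precisely.
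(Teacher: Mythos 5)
Your proof is correct and follows essentially the same route as the paper's: a minimal counterexample with respect to $|A \cap B|$, followed by one application of $P_{i,s}$ to shrink the intersection. The only difference is that you make explicit the observation that $(i,1) \notin B$ (via independence) and split the final contradiction into the empty/non-empty cases, both of which the paper leaves implicit.
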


\begin{proof}
Suppose, in hopes of a contradiction, that there are sets $A \in \mathcal{A}$, $B \in \mcal{B}$ with $A \cap B \cap ([n] \times \{1\}) = \emptyset$.  Assume that among all such pairs of sets, $|A \cap B|$ is minimum.

Since $A\cap B \neq \emptyset$, select $(i, s) \in A \cap B$.  By assumption, $s> 1$.  Since $\mathcal{A}$ is stable, $\pi_{i, s}(\mathcal{A}) = \mathcal{A}$ and so, in particular, $P_{i, s}(A) \in \mathcal{A}$.  However, $P_{i, s}(A) \cap B = A \cap B \setminus \{(i, s)\}$, contradicting the choice of the sets $A, B$.  

Therefore, for any $A \in \mathcal{A}$ and $B \in \mathcal{B}$, $A \cap B \cap ([n] \times \{1\}) \neq \emptyset$.
\end{proof}

We define the projection operation $\phi :\mcal{I}_{n,k}^{r}  \to \binom{[n]}{\leq r}$ by
\[
\phi(X) = \{i\ :\ (i,1) \in X\}.
\]
Similarly, given a family $\mcal{F}$, define $\phi(\mcal{F})=\{\phi(X)\ :\ X \in \mcal{F}\}$. As a consequence of Lemma~\ref{lem:stableint}, we have the following result.

\begin{cor}\label{cor:stableint}
For any pair of cross-intersecting families $(\mcal{A}, \mcal{B})$ in $\mcal{I}_{n, k}^r$, $(\phi(\mcal{A}), \phi(\mcal{B}))$ is a cross-intersecting pair of families in $\binom{[n]}{\leq r}$.
\end{cor}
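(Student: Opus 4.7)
The plan is to apply Lemma~\ref{lem:stableint} essentially verbatim; the corollary is really a repackaging of that lemma once one passes through $\phi$. I note that the statement should implicitly assume $(\mcal{A}, \mcal{B})$ is \emph{stable} (the label \texttt{cor:stableint} and the introductory phrase ``As a consequence of Lemma~\ref{lem:stableint}'' make this clear), since without stability some $A \in \mcal{A}$ could fail to contain any vertex of the form $(i,1)$, giving $\phi(A) = \emptyset$ and breaking the cross-intersecting property at the projected level.

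First I would check that $\phi$ maps into the stated codomain. For any $A \in \mcal{I}_{n,k}^{r}$, the set $A$ is an $r$-independent set in $\gnk$, so its $r$ vertices lie in $r$ distinct cliques $K_k^{(i)}$; in particular their first coordinates are pairwise distinct. Hence $\phi(A) = \{i \in [n] :\ (i,1) \in A\}$ is a subset of $[n]$ of cardinality at most $r$, so $\phi(A) \in \binom{[n]}{\leq r}$, and likewise for every set in $\mcal{B}$. This gives $\phi(\mcal{A}),\phi(\mcal{B}) \subseteq \binom{[n]}{\leq r}$.

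Next I would verify the cross-intersecting property. Fix arbitrary $A \in \mcal{A}$ and $B \in \mcal{B}$. By Lemma~\ref{lem:stableint} applied to the stable pair $(\mcal{A}, \mcal{B})$, there exists some $i \in [n]$ with $(i,1) \in A \cap B$. By definition of $\phi$, this $i$ belongs to both $\phi(A)$ and $\phi(B)$, so $\phi(A) \cap \phi(B) \neq \emptyset$. In particular both $\phi(A)$ and $\phi(B)$ are non-empty, confirming that $(\phi(\mcal{A}), \phi(\mcal{B}))$ is a pair of non-empty families of non-empty subsets with the cross-intersecting property.

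There is no real obstacle here: the whole proof is a three-line unpacking of Lemma~\ref{lem:stableint}. The only point worth being careful about is the stability hypothesis, and (if one wishes to strengthen the statement) the observation that one may always stabilise a cross-intersecting pair via repeated application of the operators $\pi_{i,s}$ without changing cardinalities (Lemma~\ref{lem:pipreservescint}), so that later appeals to this corollary can be made after stabilisation without loss of generality.
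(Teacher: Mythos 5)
Your proof is correct and is exactly the argument the paper intends: the corollary is stated without proof as an immediate consequence of Lemma~\ref{lem:stableint}, and your unpacking (the common element $(i,1)$ guaranteed by that lemma projects to a common element $i$ of $\phi(A)$ and $\phi(B)$) is the whole content. You are also right that the hypothesis of stability is implicitly required and is missing from the statement as printed; without it $\phi(A)$ could be empty and the conclusion would fail.
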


Eventually, we will move to a larger class of families: intersecting families (or cross-intersecting families) in $\binom{[n]}{\leq n-1}$.  Because of this, we need to carefully define the property of being maximal within the family $\binom{[n]}{\leq r}$.

\begin{definition}
An intersecting family $\mcal{X} \subseteq \binom{[n]}{\leq r}$ is said to be \textsl{$r$-maximal} if for any intersecting family $\mcal{Y}$ with $\mcal{X} \subseteq \mcal{Y} \subseteq \binom{[n]}{\leq r}$, then $\mcal{X} = \mcal{Y}$.

Similarly, a cross-intersecting pair $(\mcal{S},\ \mcal{T})$ in $\binom{[n]}{\leq r}$ is  \textsl{$r$-maximal} if for any cross-intersecting pair $(\mcal{V}, \mcal{W})$ in $\binom{[n]}{\leq r}$ with $\mcal{S} \subseteq \mcal{V}$ and $\mcal{T} \subseteq \mcal{W}$, then $(\mcal{S},\ \mcal{T}) = (\mcal{V}, \mcal{W})$.
\end{definition}

Note that an intersecting family $\mcal{X} \subseteq \binom{[n]}{\leq r}$ is $r$-maximal if and only if  the pair $(\mcal{X}, \mcal{X})$ is an $r$-maximal cross-intersecting pair. Indeed, if $(\mcal{X}, \mcal{X})$ is not $r$-maximal, then there is a set $X' \in \binom{[n]}{\leq r} \setminus \mcal{X}$ so that $(\mcal{X} \cup \{X'\}, \mcal{X})$ is also cross-intersecting.  Then, $\mcal{X} \cup \{X'\}$ is an intersecting family in $\binom{[n]}{\leq r}$ and so $\mcal{X}$ is also not $r$-maximal.

A family $\mcal{X} \subseteq \binom{[n]}{\leq r} $ is called an \textsl{up-family} if it is closed under taking supersets of size no more than $r$. So $\mcal{X}$ is an up-family if for any $S \in \binom{[n]}{\leq r}$ for which there is an $X \in \mcal{X}$ with $X \subset S$, then $S$ is also in $\mcal{X}$. Any $r$-maximal intersecting family is an up-family, as are either of the families in any $r$-maximal cross-intersecting pair.

Given a family $\mathcal{B} \subseteq \binom{[n]}{ \leq r}$ (or $\binom{[n]}{\leq n-1}$), for any $\ell \geq 0$, define the \textsl{$\ell$-uniform part} of $\mathcal{B}$ to be the family $\mathcal{B}(\ell) = \{B \in \mcal{B}:\ |B| = \ell\}$.  

We denote the inverse images of a family $\mathcal{X} \subseteq \binom{[n]}{\leq r}$ under the projection operation by
$$
\phi^{-1}(\mcal{X}) = \{ A \in \mcal{I}_{n,k}^r :  \phi(A) \in \mcal{X} \}.
$$
We can compute cardinalities of $\phi^{-1}(\mcal{X})$.  For any $n, k, r, \ell$, set $C_{r, \ell}^{n, k} = \binom{n-\ell}{r-\ell}(k-1)^{r-\ell}$.  For any $n \geq r$, $k \geq 1$ and for any family $\mcal{X} \subseteq \binom{[n]}{\leq r}$,
\begin{equation}\label{eq:card-inverse-proj}
|\phi^{-1}(\mcal{X})| = \sum_{\ell = 0}^{r} \binom{n-\ell}{r-\ell}(k-1)^{r-\ell} \cdot |\mcal{X}(\ell)| = \sum_{\ell = 0}^r C_{r, \ell}^{n, k} \cdot |\mcal{X}(\ell)|.
\end{equation}

The next result can be used to find an upper bound on stable intersecting and cross-intersecting families in $\mcal{I}^{r}_{n,k}$ in terms of $r$-maximal intersecting families in $\binom{[n]}{\leq r}$.

\begin{lem}\label{lem:invmaxstab}
Let $(\mcal{S},\ \mcal{T})$ be an $r$-maximal cross-intersecting pair in $\binom{[n]}{\leq r}$, then $(\phi^{-1}(\mcal{S}),\ \phi^{-1}(\mcal{T}))$ is a stable cross-intersecting family in $\mcal{I}_{n,k}^{r}$. 
\end{lem}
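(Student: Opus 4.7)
The lemma has two parts to prove: that $(\phi^{-1}(\mcal{S}),\ \phi^{-1}(\mcal{T}))$ is cross-intersecting in $\mcal{I}_{n,k}^{r}$, and that both families in the pair are stable under every compression operation $\pi_{i,s}$. My plan is to tackle these in turn, with the main leverage coming from the fact that the pair $(\mcal{S}, \mcal{T})$ is $r$-maximal.

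For the cross-intersecting part, I would take arbitrary $A \in \phi^{-1}(\mcal{S})$ and $B \in \phi^{-1}(\mcal{T})$. By definition, $\phi(A) \in \mcal{S}$ and $\phi(B) \in \mcal{T}$, so since the pair $(\mcal{S}, \mcal{T})$ is cross-intersecting in $\binom{[n]}{\leq r}$, there exists $i \in \phi(A) \cap \phi(B)$. Unpacking the definition of $\phi$, this means $(i,1) \in A \cap B$, which is what is needed.

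The more substantial step is stability. The key preliminary observation is that $r$-maximality of $(\mcal{S}, \mcal{T})$ forces both $\mcal{S}$ and $\mcal{T}$ to be up-families in $\binom{[n]}{\leq r}$, as was noted just after the definition of $r$-maximality: if $X \in \mcal{S}$ is contained in some $S$ with $|S| \leq r$, then $S$ meets every element of $\mcal{T}$ (because $X$ does), so $\mcal{S} \cup \{S\}$ is still cross-intersecting with $\mcal{T}$, forcing $S \in \mcal{S}$. Now fix $i \in [n]$ and $s \in [2,k]$, and let $A \in \phi^{-1}(\mcal{S})$. If $(i,s) \notin A$, then $P_{i,s}(A) = A$ and there is nothing to verify. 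Otherwise $(i,s) \in A$, which forces $(i,1) \notin A$ by independence inside the clique $K_k^{(i)}$, so $\phi(P_{i,s}(A)) = \phi(A) \cup \{i\}$ with $i \notin \phi(A)$. The size bound $|\phi(A) \cup \{i\}| = |\phi(A)| + 1 \leq |A| = r$ is automatic, and since $\mcal{S}$ is an up-family containing $\phi(A)$, we conclude $\phi(P_{i,s}(A)) \in \mcal{S}$, i.e.\ $P_{i,s}(A) \in \phi^{-1}(\mcal{S})$. Therefore $\pi_{i,s}(\phi^{-1}(\mcal{S})) \subseteq \phi^{-1}(\mcal{S})$, and the reverse inclusion always holds from the definition of $\pi_{i,s}$. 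The identical argument applies to $\mcal{T}$, giving stability of both families.

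The proof should be short and essentially mechanical once the up-family property of $\mcal{S}$ and $\mcal{T}$ is isolated; no step is a genuine obstacle. The only place that requires a moment of care is confirming that $P_{i,s}$ really grows $\phi(A)$ by a single new element (which needs the independence of $A$ inside $K_k^{(i)}$) and that this keeps the image inside $\binom{[n]}{\leq r}$, so that up-closedness of $\mcal{S}$ can actually be applied.
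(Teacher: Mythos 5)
Your proof is correct and follows essentially the same route as the paper's: cross-intersection is transferred through $\phi$, and stability is deduced from the up-family property that $r$-maximality forces on $\mcal{S}$ and $\mcal{T}$, together with the observation that $(i,s)\in A$ forces $|\phi(A)|\le r-1$. The only loose phrase is that the reverse inclusion $\phi^{-1}(\mcal{S}) \subseteq \pi_{i,s}(\phi^{-1}(\mcal{S}))$ ``always holds from the definition'' --- it does not in general (a set $X$ with $(i,s)\in X$ and $P_{i,s}(X)\notin\mcal{F}$ is dropped by $\pi_{i,s}$), but it does hold here because you have already shown $P_{i,s}(A)\in\phi^{-1}(\mcal{S})$ for every $A$, which places each $A$ in the second set of the union defining $\pi_{i,s}$.
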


\begin{proof}
Set $\mcal{A}:=\phi^{-1}(\mcal{S})$ and $\mcal{B}:=\phi^{-1}(\mcal{T})$. Given $X \in \mcal{A}$ and $Y \in \mcal{B}$, we have $\phi(X) \in \mcal{S}$ and $\phi(Y) \in \mcal{T}$. 
Since $(\mcal{S},\ \mcal{T})$ is cross-intersecting, $\phi(X) \cap \phi(Y) \neq \emptyset$. Therefore, there is some $i \in [n]$ such that $(i,1) \in X \cap Y$, and thus $\left(\mcal{A},\ \mcal{B} \right)$ is cross-intersecting. 

We now prove that $\left(\mcal{A},\ \mcal{B} \right)$ is stable.  Let $i \in [n]$, $s \in [2,n]$, and $Z \in \mcal A$ be arbitrary, so $\phi(Z) \in \mcal S$. If $(i,s) \notin Z$, then $P_{i,s}(Z)=Z \in \mcal A$.
Now suppose that  $(i,s) \in Z$. Observe that this forces $|\phi(Z)| \le r-1$. We have $P_{i,s}(Z)=Z\setminus \{(i,s)\}\cup \{(i,1)\}$. Recall that since $(\mcal S, \mcal T)$ is an $r$-maximal cross-intersecting pair, $\mcal S$ is an up-family, so $\phi(P_{i,s}(Z))= \phi(Z) \cup \{(i,1)\} \in \mcal S$ (it has cardinality at most $r$). Therefore, $P_{i,s}(Z) \in \mcal{A}=\phi^{-1}(\mcal{S})$. This shows that $\pi_{i,s}(\mcal{A})=\mcal{A}$, and thus that $\mcal{A}$ is stable. Similarly, $\mcal{B}$ is also stable. 
\end{proof}

For any family $\mcal{A}$ in $\mcal{I}_{n, k}^r$, $\mcal{A} \subseteq \phi^{-1}(\phi(\mcal{A}))$, this immediately gives the following.

\begin{cor}\label{cor:phi-inverse-phi}
Let $(\mcal{A}, \mcal{B})$ be a stable cross-intersecting pair in $\mcal{I}_{n, k}^r$.  Then, there is an $r$-maximal cross-intersecting pair $(\mcal{S}, \mcal{T})$ in $\binom{[n]}{\leq r}$  with $\phi(\mcal{A}) \subseteq \mcal{S}$ and $\phi(\mcal{B}) \subseteq \mcal{T}$.  The pair $(\phi^{-1}(\mcal{S}), \phi^{-1}(\mcal{T}))$ is a stable cross-intersecting pair in $\mcal{I}_{n, k}^r$ with $\mcal{A} \subseteq \phi^{-1}(\mcal{S})$ and $\mcal{B} \subseteq \phi^{-1}(\mcal{T})$.  If $(\mcal{A}, \mcal{B})$ is maximal, then $\mcal{A} = \phi^{-1}(\phi(\mcal{A}))$ and $\mcal{B} = \phi^{-1}(\phi(\mcal{B}))$.
\end{cor}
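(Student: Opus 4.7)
The plan is to handle the three claims in the stated order, combining Corollary~\ref{cor:stableint} (which provides a cross-intersecting projection in $\binom{[n]}{\leq r}$) with Lemma~\ref{lem:invmaxstab} (which lifts an $r$-maximal cross-intersecting pair in $\binom{[n]}{\leq r}$ back to a stable cross-intersecting pair in $\mcal{I}_{n,k}^r$). The bridge between the two is a finite extension argument.

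For the first assertion, I would begin by invoking Corollary~\ref{cor:stableint} on the stable cross-intersecting pair $(\mcal{A}, \mcal{B})$ to conclude that $(\phi(\mcal{A}), \phi(\mcal{B}))$ is itself a cross-intersecting pair in $\binom{[n]}{\leq r}$. Consider the collection $\mathcal{P}$ of all cross-intersecting pairs $(\mcal{S}', \mcal{T}')$ in $\binom{[n]}{\leq r}$ satisfying $\phi(\mcal{A}) \subseteq \mcal{S}'$ and $\phi(\mcal{B}) \subseteq \mcal{T}'$, partially ordered by componentwise inclusion. Then $\mathcal{P}$ is nonempty (it contains $(\phi(\mcal{A}), \phi(\mcal{B}))$) and finite (since $\binom{[n]}{\leq r}$ is finite), so it has a maximal element $(\mcal{S}, \mcal{T})$. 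This pair is $r$-maximal in the sense of the paper, since any cross-intersecting extension of $(\mcal{S}, \mcal{T})$ inside $\binom{[n]}{\leq r}$ would still lie in $\mathcal{P}$ and therefore equal $(\mcal{S}, \mcal{T})$ by maximality.

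For the second assertion, I would apply Lemma~\ref{lem:invmaxstab} to this $r$-maximal pair $(\mcal{S}, \mcal{T})$, which immediately gives that $(\phi^{-1}(\mcal{S}), \phi^{-1}(\mcal{T}))$ is a stable cross-intersecting pair in $\mcal{I}_{n,k}^r$. The containments are routine: for any $X \in \mcal{A}$, we have $\phi(X) \in \phi(\mcal{A}) \subseteq \mcal{S}$, so $X \in \phi^{-1}(\mcal{S})$, and similarly $\mcal{B} \subseteq \phi^{-1}(\mcal{T})$.

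For the third assertion, assume $(\mcal{A}, \mcal{B})$ is maximal among cross-intersecting pairs in $\mcal{I}_{n,k}^r$. Since $(\phi^{-1}(\mcal{S}), \phi^{-1}(\mcal{T}))$ is cross-intersecting and componentwise contains $(\mcal{A}, \mcal{B})$ by the previous step, maximality forces $\mcal{A} = \phi^{-1}(\mcal{S})$ and $\mcal{B} = \phi^{-1}(\mcal{T})$. Using the always-valid containment $\mcal{A} \subseteq \phi^{-1}(\phi(\mcal{A}))$ together with $\phi(\mcal{A}) \subseteq \mcal{S}$, we obtain the chain
\[
\mcal{A} \;\subseteq\; \phi^{-1}(\phi(\mcal{A})) \;\subseteq\; \phi^{-1}(\mcal{S}) \;=\; \mcal{A},
\]
which forces $\mcal{A} = \phi^{-1}(\phi(\mcal{A}))$, and an identical argument handles $\mcal{B}$. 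The whole proof is essentially bookkeeping; the only step that requires even momentary attention is the first, where one needs to observe that a maximal element of $\mathcal{P}$ exists (by finiteness) and that such an element is $r$-maximal in the sense defined in the previous section, so there is no genuine obstacle to overcome.
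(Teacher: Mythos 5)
Your proposal is correct and follows essentially the same route the paper intends: project via Corollary~\ref{cor:stableint}, extend to an $r$-maximal pair (which exists by finiteness), lift back with Lemma~\ref{lem:invmaxstab}, and finish using the containment $\mcal{A} \subseteq \phi^{-1}(\phi(\mcal{A}))$. The paper states this corollary without a written proof, declaring it immediate from that containment, and your argument is simply the fleshed-out version of that same reasoning.
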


Lemma~\ref{lem:invmaxstab} can be used to deduce the stability and maximality of two particular families.  Recall the definition of $\mcal{H}_{n, k}^r$ from Equation~\eqref{eq:HM-family-in-clique-union} and the cross-intersecting pair $(\mfk{H}_{n, k}^r, \mfk{M}_{n, k}^r)$ defined in Equation~(\ref{eq:HM-cross-intersecting}).  We also define three special examples of families in $\binom{[n]}{\leq r}$ that correspond to the families $\mcal{H}_{n, k}^r$, and $(\mfk{H}_{n, k}^r, \mfk{M}_{n, k}^r)$ in $\mcal{I}_{n,k}^{r}$.

\begin{definition}\label{def:Vnr-cross-intersecting}
For any $r \leq n-1$, define the family
\begin{equation}\label{eq:calVr}
\mathcal{V}_{n, r} = \{[2, r+1]\} \cup \left\{X \in \binom{[n]}{\leq r} :\ 1 \in X \text{ and } X \cap [2, r+1] \neq \emptyset \right\}.
\end{equation}
For any $r \leq n$, define $\mcal{X}_{n, r} = \{[r]\}$ and $\mcal{Y}_{n, r} = \{Y \in \binom{[n]}{\leq r}:\ Y \cap [r] \neq \emptyset\}$.
\end{definition}

Note that $\mathcal{V}_{n, r}$ is an $r$-maximal intersecting family in $\binom{[n]}{\leq r}$ with $\cap \mcal{V}_{n, r} = \emptyset$ and $\phi^{-1}(\mcal{V}_{n, r}) = \mcal{H}_{n, k}^r$.  Similarly, $(\mcal{X}_{n, r}, \mcal{Y}_{n, r})$ is an $r$-maximal cross-intersecting family in $\binom{[n]}{\leq r}$ with $(\phi^{-1}(\mcal{X}_{n, r}), \phi^{-1}(\mcal{Y}_{n, r})) = (\mfk{H}_{n, k}^r, \mfk{M}_{n, k}^r)$.

\begin{cor}\label{cor:extrexarestable}
Let $n,r$ be positive integers with $2\leq r\leq n$,
\begin{enumerate}[(a)]
\item then $\left(\mfk{H}_{n,k}^{r}, \mfk{M}_{n,k}^{r} \right)$ is a maximal and stable cross-intersecting pair in $\mcal{I}_{n,k}^{r}$; and
\item if $r \leq n-1$, then $\mathcal{H}_{n,k}^r$ is a maximal and stable intersecting family in $\mcal{I}_{n,k}^{r}$.
\end{enumerate}
\end{cor}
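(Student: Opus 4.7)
The plan is that in each part, stability follows directly from Lemma~\ref{lem:invmaxstab} applied to the corresponding $r$-maximal family (or pair) in $\binom{[n]}{\leq r}$, while maximality in $\mcal{I}_{n,k}^r$ is proved by a direct combinatorial argument producing, for any candidate new member, a witness set in the existing family that misses it. As a preliminary step, I would verify the claims made in the paragraph preceding the corollary: that $\mcal{V}_{n,r}$ is $r$-maximal intersecting and $(\mcal{X}_{n,r}, \mcal{Y}_{n,r})$ is $r$-maximal cross-intersecting in $\binom{[n]}{\leq r}$, and that their preimages under $\phi$ are precisely $\mcal{H}_{n,k}^r$ and $(\mfk{H}_{n,k}^r, \mfk{M}_{n,k}^r)$ respectively.

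For part (a), Lemma~\ref{lem:invmaxstab} applied to $(\mcal{X}_{n,r}, \mcal{Y}_{n,r})$ immediately yields the stability of $(\mfk{H}_{n,k}^r, \mfk{M}_{n,k}^r)$ as a cross-intersecting pair. For maximality, $\mfk{M}_{n,k}^r$ cannot be enlarged because cross-intersection with $\mfk{H}_{n,k}^r = \{[r]\times\{1\}\}$ is exactly the defining condition of $\mfk{M}_{n,k}^r$. To show $\mfk{H}_{n,k}^r$ cannot be enlarged, given $A \in \mcal{I}_{n,k}^r \setminus \mfk{H}_{n,k}^r$, cardinality considerations force some $i \in [r]$ with $(i,1) \notin A$; I would then build $B \in \mfk{M}_{n,k}^r$ by placing $(i,1)$ in $B$ and selecting $r-1$ further vertices from $r-1$ distinct cliques in $[n]\setminus\{i\}$, each vertex chosen outside $A$. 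Since $n-1 \geq r-1$ and $k \geq 2$, such a $B$ exists and is disjoint from $A$.

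For part (b), since $r$-maximality of an intersecting family $\mcal{X}$ coincides with $r$-maximality of $(\mcal{X}, \mcal{X})$ as a cross-intersecting pair, Lemma~\ref{lem:invmaxstab} applied to $(\mcal{V}_{n,r}, \mcal{V}_{n,r})$ gives that $\mcal{H}_{n,k}^r$ is stable and intersecting. For maximality, fix $A \in \mcal{I}_{n,k}^r \setminus \mcal{H}_{n,k}^r$. If $(1,1) \in A$, then necessarily $A \cap H = \emptyset$, so $A$ fails to intersect $H \in \mcal{H}_{n,k}^r$. Otherwise $(1,1) \notin A$ and $A \neq H$, so some $i \in [2,r+1]$ satisfies $(i,1) \notin A$, and I would construct $F \in \mcal{H}_{n,k}^r$ containing $(1,1)$ and $(i,1)$ together with $r-2$ further vertices drawn from $r-2$ distinct cliques in $[n]\setminus\{1,i\}$, each avoiding $A$. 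The hypothesis $r \leq n-1$ supplies $n-2 \geq r-2$, and $k \geq 2$ supplies a vertex outside $A$ in each selected clique, so $F$ is disjoint from $A$.

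The main obstacle is the combinatorial bookkeeping in the explicit constructions of $B$ and $F$: one must simultaneously track that enough distinct cliques remain after excluding one or two fixed ones, and that in each selected clique there is a vertex outside the independent set $A$. This is where the inequalities $n \geq r$ (for part (a)) and $n \geq r+1$ (for part (b)), together with $k \geq 2$, enter in an essential way. Every other aspect of the argument is a routine consequence of Lemma~\ref{lem:invmaxstab} and the identifications $\phi^{-1}(\mcal{V}_{n,r}) = \mcal{H}_{n,k}^r$ and $(\phi^{-1}(\mcal{X}_{n,r}), \phi^{-1}(\mcal{Y}_{n,r})) = (\mfk{H}_{n,k}^r, \mfk{M}_{n,k}^r)$ recorded in the remark preceding the corollary.
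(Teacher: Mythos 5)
Your proposal is correct and follows essentially the same route as the paper: stability is obtained from Lemma~\ref{lem:invmaxstab} applied to the $r$-maximal families $(\mcal{X}_{n,r},\mcal{Y}_{n,r})$ and $\mcal{V}_{n,r}$ via the identifications with $\phi^{-1}$, exactly as in the paper's proof. The only difference is that the paper dismisses maximality as clear, whereas you spell out the routine witness-set constructions (which are correct, using $|A|=r$ to find a missed vertex $(i,1)$ and $k\geq 2$, $n\geq r$ to build a disjoint set).
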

\begin{proof}
Recall the definition of the set $\mfk{H}_{n,k}^{r} = \{ [r] \times \{1\} \} = \phi^{-1}( \{[r]\} )  = \phi^{-1}(\mcal{X}_{n, r})$ and 
$\mfk{M}_{n,k}^{r}=\phi^{-1}(\{Y \in \binom{[n]}{\leq r}\ :\ Y \cap [r] \neq \emptyset\}) = \phi^{-1}(\mcal{Y}_{n, r})$.  
Clearly these families are maximal and cross-intersecting. As noted above, the pair $(\mcal{X}_{n, r}, \mcal{Y}_{n, r})$ is $r$-maximal.  Thus, stability of $\mfk{H}_{n,k}^{r},\ \mfk{M}_{n,k}^{r}$ follows from Lemma~\ref{lem:invmaxstab}. 

The proof of part (b) is similar and follows from the fact that
\begin{align*}
\mcal{H}_{n, k}^r 
  &= \phi^{-1} \left(  \left\{ [2, r+1] \right\} \cup \left\{X \in \binom{[n]}{\leq r}:\ 1 \in X \text{ and } X \cap [2, r+1] \neq \emptyset \right\} \right) \\
 &= \phi^{-1}(\mcal{V}_{n, r}). \qedhere
\end{align*}
\end{proof}


\section{Intersection and cross-intersection in bounded sets}\label{sec:crsets}

In the proofs of Theorems~\ref{thm:main} and \ref{thm:aux}, the idea is to look at the projection of an intersecting maximal and stable family $\mathcal{F}$, and apply the Hilton-Milner Theorem (Theorem~\ref{thm:HMforSets}) and its cross-intersecting version (Theorem~\ref{thm:hmci}) to the $\ell$-uniform parts of $\phi(\mathcal{F})$;  these are then compared to either the $\ell$-uniform part of the family $\mcal{V}_r$, or the $\ell$-uniform part of the cross-intersecting family $(\mcal{X}_{n, r}, \mcal{Y}_{n, r})$.  One obstacle is that these results will not apply in the case $\ell > n/2$, which arises when $r > n/2$.  Indeed, if $r > n/2$, it is possible to have an $r$-maximal intersecting family $\mathcal{B} \subseteq \binom{[n]}{\leq r}$ with $\cap \mcal{B} = \emptyset$ and some $\ell$ with $|\mcal{B}(\ell)| > |\mcal{V}_r(\ell)|$.  To address this, we shall use a technique that was employed in~\cite{DF22years} and also in~\cite{liao2024hiltonmilnertheoremkmultisets} to simultaneously bound the number of sets arising from the $\ell$-uniform and $(n-\ell)$-uniform parts of projections.

\begin{lem}\label{lem:l-n-l}
Let $n/2 \leq r\leq n$ and let $(\mcal{S},\ \mcal{T})$ be an $r$-maximal cross-intersecting pair in $\binom{[n]}{\leq r}$. For any $\ell$ with $\ell, n-\ell \leq r$, we have 
\[
|\mcal{S}(n-\ell)|+ |\mcal{T}(n-\ell)| + |\mcal{S}(\ell)|+ |\mcal{T}(\ell)| = 2 \binom{n}{\ell}. 
\] 
\end{lem}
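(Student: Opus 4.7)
The plan is to establish a pointwise identity for each $\ell$-subset of $[n]$ and then sum. For any $A \in \binom{[n]}{\ell}$, set $A^c = [n] \setminus A$; by hypothesis $|A^c| = n - \ell \leq r$, so both $A$ and $A^c$ belong to $\binom{[n]}{\leq r}$. I claim that for every such $A$,
\[
\mathbf{1}[A \in \mcal{S}] + \mathbf{1}[A^c \in \mcal{T}] = 1 \qquad\text{and}\qquad \mathbf{1}[A \in \mcal{T}] + \mathbf{1}[A^c \in \mcal{S}] = 1.
\]
Adding the four indicators yields $2$ for each $A$, and summing over all $\binom{n}{\ell}$ choices will produce the desired identity after matching the sums involving $A^c$ to the $(n-\ell)$-uniform parts.

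To prove the pointwise identity, I would argue two opposing inequalities for the first equation; the second is symmetric. For the upper bound, note $A \cap A^c = \emptyset$, so the cross-intersecting property forbids $A \in \mcal{S}$ and $A^c \in \mcal{T}$ from both holding, giving the sum is at most $1$. For the lower bound, suppose $A \notin \mcal{S}$; by $r$-maximality of $(\mcal{S},\mcal{T})$, the set $A$ cannot be adjoined to $\mcal{S}$ while preserving cross-intersection, so some $T \in \mcal{T}$ satisfies $T \cap A = \emptyset$, i.e.\ $T \subseteq A^c$. Since $\mcal{T}$ is an up-family (the excerpt observes this for any member of an $r$-maximal cross-intersecting pair) and $|A^c| \leq r$, it follows that $A^c \in \mcal{T}$; hence the sum is at least $1$.

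Summing the pointwise identity over $A \in \binom{[n]}{\ell}$ gives
\[
\sum_{A \in \binom{[n]}{\ell}} \Bigl( \mathbf{1}[A \in \mcal{S}] + \mathbf{1}[A \in \mcal{T}] + \mathbf{1}[A^c \in \mcal{S}] + \mathbf{1}[A^c \in \mcal{T}] \Bigr) = 2\binom{n}{\ell}.
\]
When $\ell \neq n - \ell$, the map $A \mapsto A^c$ is a bijection between $\binom{[n]}{\ell}$ and $\binom{[n]}{n-\ell}$, so $\sum_A \mathbf{1}[A^c \in \mcal{S}] = |\mcal{S}(n-\ell)|$ and similarly for $\mcal{T}$; the left-hand side is then $|\mcal{S}(\ell)| + |\mcal{T}(\ell)| + |\mcal{S}(n-\ell)| + |\mcal{T}(n-\ell)|$, as claimed. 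When $\ell = n/2$, the map $A \mapsto A^c$ is a fixed-point-free involution on $\binom{[n]}{\ell}$, so the sum collapses to $2(|\mcal{S}(\ell)| + |\mcal{T}(\ell)|)$, which coincides with the claimed expression since $\mcal{S}(\ell) = \mcal{S}(n-\ell)$ and $\mcal{T}(\ell) = \mcal{T}(n-\ell)$ in this case.

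I do not expect a major obstacle: the only non-routine step is extracting the lower bound in the pointwise identity, which requires the joint use of $r$-maximality and the up-family property to promote the existence of some $T \subseteq A^c$ in $\mcal{T}$ to the stronger statement $A^c \in \mcal{T}$. The two parity cases for $\ell$ then reduce to the same bookkeeping after recognising the involution $A \mapsto A^c$.
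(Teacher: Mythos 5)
Your proof is correct, and it rests on exactly the same combinatorial engine as the paper's: if $A \notin \mcal{S}$ then $r$-maximality supplies some $T \in \mcal{T}$ disjoint from $A$, hence $T \subseteq A^c$, and the up-family property promotes this to $A^c \in \mcal{T}$. Where you differ is in the bookkeeping. The paper establishes the single equivalence $X \in \binom{[n]}{\ell} \setminus (\mcal{S}(\ell) \cup \mcal{T}(\ell))$ if and only if $X^c \in \mcal{S}(n-\ell) \cap \mcal{T}(n-\ell)$, then needs inclusion--exclusion on each uniform level plus the trick of writing the resulting identity for both $\ell$ and $n-\ell$ and adding the two. Your pointwise partition identities $\mathbf{1}[A \in \mcal{S}] + \mathbf{1}[A^c \in \mcal{T}] = 1$ and $\mathbf{1}[A \in \mcal{T}] + \mathbf{1}[A^c \in \mcal{S}] = 1$ keep the two families paired with each other's complements separately, which avoids inclusion--exclusion entirely and, upon summing each identity on its own, actually yields the finer conclusions $|\mcal{S}(\ell)| + |\mcal{T}(n-\ell)| = \binom{n}{\ell}$ and $|\mcal{T}(\ell)| + |\mcal{S}(n-\ell)| = \binom{n}{\ell}$, of which the lemma is the sum. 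That refinement is a modest bonus of your decomposition (it is not needed elsewhere in the paper, which only ever uses the combined quantity, but it is cleaner); the paper's version has the small advantage of exhibiting the complementation bijection explicitly between two named set families. Your separate treatment of the case $\ell = n/2$ is harmless but unnecessary, since the complementation map is a bijection from $\binom{[n]}{\ell}$ to $\binom{[n]}{n-\ell}$ in all cases and the identification of $\sum_A \mathbf{1}[A^c \in \mcal{S}]$ with $|\mcal{S}(n-\ell)|$ goes through uniformly.
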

\begin{proof}
Let $\ell \leq r$, with $n-\ell \leq r$ as well, and consider a set $X \in \binom{[n]}{\ell} \setminus (\mcal{S}(\ell) \cup \mcal{T}(\ell) ) $. By $r$-maximality, there are sets $S \in \mcal{S}$ and $T \in \mcal{T}$ with $X \cap S = X \cap T = \emptyset$ and hence $S \subseteq X^c$ and $T \subseteq X^c$.  Since both $\mcal{S}$ and $\mcal{T}$ are up-families, and $|X^c| = n-\ell \leq r$, it follows that $X^c \in \mcal{S}(n-\ell) \cap \mcal{T}(n-\ell)$.

Conversely, for every $Y \in \mcal{S}(n-\ell) \cap \mcal{T}(n-\ell)$, since $Y \cap Y^c = \emptyset$, it follows $Y^c \notin \mcal{S}(\ell)$ and $Y^c \notin \mcal{T}(\ell)$.  Thus, $Y^c \in \binom{[n]}{\ell} \setminus (\mcal{S}(\ell) \cup \mcal{T}(\ell))$.

Thus, we have,
\begin{equation}\label{eq:setcomplement}
X \in \binom{[n]}{\ell} \setminus (\mcal{S}(\ell) \cup \mcal{T}(\ell) ) \text{ if and only if  } X^{c} \in \mcal{S}(n-\ell) \cap \mcal{T}(n-\ell).
\end{equation}

Using  the inclusion-exclusion principle and \eqref{eq:setcomplement}, it follows that 
\begin{align}\label{eq:incl-excl-l-n-l}
|\mcal{S}(n-\ell)| + |\mcal{T}(n-\ell)| 
&= |\mcal{S}(n-\ell) \cup \mcal{T}(n-\ell)| + |\mcal{S}(n-\ell) \cap \mcal{T}(n-\ell)|\notag\\ 
&=|\mcal{S}(n-\ell) \cup \mcal{T}(n-\ell)| + \binom{n}{\ell} - |\mcal{S}(\ell) \cup \mcal{T}(\ell)|.
\end{align}
Replacing $\ell$ by $n-\ell$ in Equation~\eqref{eq:incl-excl-l-n-l} gives
\begin{align*}
|\mcal{S}(\ell)|+ |\mcal{T}(\ell)|  &=  |\mcal{S}(\ell) \cup \mcal{T}(\ell)|+\binom{n}{\ell}- |\mcal{S}(n-\ell) \cup \mcal{T}(n-\ell)|. 
\end{align*} 
and adding this to Equation~\eqref{eq:incl-excl-l-n-l} gives the result.
\end{proof}

Applying Lemma~\ref{lem:l-n-l} simultaneously to an $r$-maximal cross-intersecting family in $\binom{[n]}{\leq r}$ and the family $(\mcal{X}_{n, r}, \mcal{Y}_{n, r})$ (see Definition~\ref{def:Vnr-cross-intersecting}) gives the following.

\begin{lem}\label{lem:compare-maxl}
Let $n/2 \leq r \leq n$ and let $(\mcal{S},\ \mcal{T})$ be an $r$-maximal cross-intersecting pair in $\binom{[n]}{\leq r}$.  For any $\ell$ with $\ell, n-\ell \leq r$, 
\begin{multline*}
|\mcal{S}(n-\ell)|+ |\mcal{T}(n-\ell)| + |\mcal{S}(\ell)|+ |\mcal{T}(\ell)|\\
 = |\mcal{X}_{n, r}(n-\ell)| + |\mcal{Y}_{n, r}(n-\ell)| + |\mcal{X}_{n, r}(\ell)| + |\mcal{Y}_{n, r}(\ell)|.
\end{multline*}
\end{lem}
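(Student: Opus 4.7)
The plan is to apply Lemma~\ref{lem:l-n-l} twice, once to each of the two cross-intersecting pairs that appear in the statement, and observe that both applications yield the same quantity $2\binom{n}{\ell}$.

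First I would apply Lemma~\ref{lem:l-n-l} directly to the $r$-maximal cross-intersecting pair $(\mcal{S},\ \mcal{T})$ in $\binom{[n]}{\leq r}$. Since the hypotheses $n/2 \leq r \leq n$ and $\ell,\, n-\ell \leq r$ match exactly, this gives
\[
|\mcal{S}(n-\ell)| + |\mcal{T}(n-\ell)| + |\mcal{S}(\ell)| + |\mcal{T}(\ell)| = 2\binom{n}{\ell}.
\]

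Next I would verify that the pair $(\mcal{X}_{n,r},\ \mcal{Y}_{n,r})$ from Definition~\ref{def:Vnr-cross-intersecting} is itself an $r$-maximal cross-intersecting pair in $\binom{[n]}{\leq r}$. This was already observed in the paragraph following that definition: $\mcal{X}_{n,r} = \{[r]\}$ and $\mcal{Y}_{n,r}$ consists of all sets in $\binom{[n]}{\leq r}$ that meet $[r]$, which is clearly cross-intersecting with $\{[r]\}$; moreover any set outside $\mcal{Y}_{n,r}$ is disjoint from $[r]$ and so cannot be added to either side without destroying the cross-intersecting property, while $\mcal{X}_{n,r}$ is already closed under supersets in $\binom{[n]}{\leq r}$ up to the requirement that each superset still meet every element of $\mcal{Y}_{n,r}$. (One checks directly that no set other than $[r]$ can be added to $\mcal{X}_{n,r}$ because $\mcal{Y}_{n,r}$ contains singletons from $[r]$.) Thus $(\mcal{X}_{n,r},\ \mcal{Y}_{n,r})$ satisfies the hypotheses of Lemma~\ref{lem:l-n-l}, which yields
\[
|\mcal{X}_{n,r}(n-\ell)| + |\mcal{Y}_{n,r}(n-\ell)| + |\mcal{X}_{n,r}(\ell)| + |\mcal{Y}_{n,r}(\ell)| = 2\binom{n}{\ell}.
\]

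Combining the two displayed equalities gives the desired identity. There is really no significant obstacle here: this lemma is essentially a corollary of Lemma~\ref{lem:l-n-l} together with the already-established $r$-maximality of the canonical cross-intersecting pair $(\mcal{X}_{n,r},\ \mcal{Y}_{n,r})$. The only thing that needs a moment of care is confirming $r$-maximality of the canonical pair in the setting of bounded (not uniform) set families, which reduces to noting that both families are up-families and that no set disjoint from $[r]$ can be added to either side.
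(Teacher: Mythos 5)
Your proposal is correct and matches the paper's own argument: the paper likewise obtains this lemma by applying Lemma~\ref{lem:l-n-l} to both $(\mcal{S},\mcal{T})$ and $(\mcal{X}_{n,r},\mcal{Y}_{n,r})$ and noting that each sum equals $2\binom{n}{\ell}$. The $r$-maximality of $(\mcal{X}_{n,r},\mcal{Y}_{n,r})$ that you verify is already recorded in the paper in the paragraph following Definition~\ref{def:Vnr-cross-intersecting}, so no further work is needed.
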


Similarly, applying Lemma~\ref{lem:l-n-l} simultaneously to an $r$-maximal intersecting family in $\binom{[n]}{\leq r}$ and the family $\mcal{V}_{n, r}$ (and dividing by two) gives a parallel result for intersecting sets.

\begin{lem}\label{lem:compare-maxInterSets}
Let $n/2 \leq r \leq n$ and let $\mcal{B} \subseteq \binom{[n]}{\leq r}$ be an $r$-maximal intersecting family.  For any $\ell$ with $\ell, n-\ell \leq r$,
\[
|\mcal{B}(n-\ell)| + |\mcal{B}(\ell)| = |\mcal{V}_{n, r}(n-\ell)| + |\mcal{V}_{n, r}(\ell)|.
\]
\end{lem}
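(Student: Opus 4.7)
The plan is to derive Lemma~\ref{lem:compare-maxInterSets} as a direct specialization of Lemma~\ref{lem:l-n-l}, exploiting the equivalence between intersecting families and the diagonal cross-intersecting pair $(\mcal{B},\mcal{B})$. Concretely, the paper already observes (right after the definition of $r$-maximality) that an intersecting family $\mcal{B}\subseteq \binom{[n]}{\leq r}$ is $r$-maximal if and only if the pair $(\mcal{B},\mcal{B})$ is an $r$-maximal cross-intersecting pair in $\binom{[n]}{\leq r}$. This observation immediately makes Lemma~\ref{lem:l-n-l} applicable to both $\mcal{B}$ and to the reference family $\mcal{V}_{n,r}$, since $\mcal{V}_{n,r}$ is noted (just after Definition~\ref{def:Vnr-cross-intersecting}) to be an $r$-maximal intersecting family in $\binom{[n]}{\leq r}$.

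The explicit steps would be as follows. First, fix $\ell$ with $\ell,n-\ell\leq r$ and apply Lemma~\ref{lem:l-n-l} to the $r$-maximal cross-intersecting pair $(\mcal{S},\mcal{T})=(\mcal{B},\mcal{B})$. Because the two coordinates of the pair coincide, the left-hand side of the conclusion of Lemma~\ref{lem:l-n-l} becomes $2\bigl(|\mcal{B}(n-\ell)|+|\mcal{B}(\ell)|\bigr)$, and dividing by two gives
\[
|\mcal{B}(n-\ell)|+|\mcal{B}(\ell)| = \binom{n}{\ell}.
\]
Second, apply the same lemma to the $r$-maximal cross-intersecting pair $(\mcal{V}_{n,r},\mcal{V}_{n,r})$ to obtain
\[
|\mcal{V}_{n,r}(n-\ell)|+|\mcal{V}_{n,r}(\ell)| = \binom{n}{\ell}.
\]
Equating the two expressions yields the desired identity.

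There is no real obstacle to this argument; the only subtlety worth flagging is the correspondence between $r$-maximal intersecting families and $r$-maximal cross-intersecting diagonal pairs, and this has already been established in the paper. One small bookkeeping point is to confirm that $\mcal{V}_{n,r}$ is genuinely $r$-maximal (not merely maximum): this is immediate from its definition, since every $X\in\binom{[n]}{\leq r}$ with $1\in X$ and $X\cap[2,r+1]\neq\emptyset$ already lies in $\mcal{V}_{n,r}$, and the only additional set, $[2,r+1]$, is forced by the requirement that the family be intersecting yet contain a set avoiding $1$. With these ingredients in place, the statement of Lemma~\ref{lem:compare-maxInterSets} is an immediate two-line consequence of Lemma~\ref{lem:l-n-l}.
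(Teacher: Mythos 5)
Your proposal is correct and matches the paper's own argument: the paper derives Lemma~\ref{lem:compare-maxInterSets} precisely by applying Lemma~\ref{lem:l-n-l} to the diagonal pairs $(\mcal{B},\mcal{B})$ and $(\mcal{V}_{n,r},\mcal{V}_{n,r})$ and dividing by two, using the already-noted equivalence between $r$-maximal intersecting families and $r$-maximal diagonal cross-intersecting pairs. Nothing is missing.
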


Before proceeding further, we include the following elementary result.
\begin{lem}\label{lem:binomdiff}
Given $n,m,r$, with $m < r\leq n$ and $2m\leq n$, we have 
\[\binom{n-m}{m}- \binom{n-r}{m} \geq 1, \] with equality if and only if $r=m+1$ and $n=2m$.
\end{lem}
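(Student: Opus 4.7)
My plan is to telescope the difference $\binom{n-m}{m}-\binom{n-r}{m}$ into a sum of nonnegative binomial coefficients using Pascal's rule, and then read off both the inequality and the equality characterization from that sum. The tool is Pascal's identity $\binom{a}{m}=\binom{a-1}{m}+\binom{a-1}{m-1}$, which rearranges to $\binom{a}{m}-\binom{a-1}{m}=\binom{a-1}{m-1}$.

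Summing this rearranged identity for $a=n-m,\ n-m-1,\ \ldots,\ n-r+1$ produces the telescoping formula
\[
\binom{n-m}{m}-\binom{n-r}{m} \;=\; \sum_{i=m}^{r-1}\binom{n-i-1}{m-1}.
\]
Since $r>m$, the sum has at least one term. The leading summand (at $i=m$) is $\binom{n-m-1}{m-1}$, and the hypothesis $2m\leq n$ gives $n-m-1\geq m-1$, so this summand is at least $1$. Because every summand on the right-hand side is a nonnegative integer, the whole sum is at least $1$, which is exactly the claimed inequality.

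For the equality characterization, since the leading summand is already at least $1$ and the other summands are nonnegative integers, equality to $1$ forces the leading summand to equal exactly $1$ and every later summand (if any is present) to vanish. The leading term $\binom{n-m-1}{m-1}$ equals $1$ precisely when $n-m-1=m-1$, i.e.\ when $n=2m$; under $n=2m$, every later summand $\binom{n-i-1}{m-1}$ with $i>m$ satisfies $n-i-1<m-1$ and therefore vanishes. Meanwhile, $r=m+1$ reduces the sum to this single leading summand. Thus the conjunction $r=m+1$ and $n=2m$ is exactly what makes the sum collapse to $1$.

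I do not anticipate any real obstacle: the substantive step is just the telescoping identity, and the rest is elementary integrality reasoning about a nonnegative-integer sum whose first term is already bounded below by $1$. The only mild care required is to treat degenerate binomial coefficients $\binom{a}{b}$ with $a<b$ correctly (they are zero and contribute nothing to the sum), so that the case analysis in the equality step is watertight.
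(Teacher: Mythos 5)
Your telescoping argument is correct and is essentially the paper's own proof in expanded form: the paper applies Pascal's rule exactly once, writing $\binom{n-m}{m}-\binom{n-r}{m}\geq\binom{n-m}{m}-\binom{n-m-1}{m}=\binom{n-m-1}{m-1}\geq\binom{m-1}{m-1}=1$, which amounts to keeping only the leading term $\binom{n-m-1}{m-1}$ of your sum $\sum_{i=m}^{r-1}\binom{n-i-1}{m-1}$ and discarding the (nonnegative) rest. So for the inequality itself, and for the equality characterization when $m\geq 2$, your proof is sound and matches the paper's route.

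There is, however, one genuine soft spot in your equality step, namely the claim that $\binom{n-m-1}{m-1}=1$ ``precisely when $n-m-1=m-1$.'' This is false for $m=1$, where $\binom{n-2}{0}=1$ for every $n$; in that case every term of your telescoping sum equals $1$, so the sum is $r-1$ and equality holds exactly when $r=2$, with no constraint on $n$ (e.g.\ $n=3$, $m=1$, $r=2$ gives $\binom{2}{1}-\binom{1}{1}=1$). So the ``only if'' direction of the lemma's equality characterization actually fails at $m=1$, and your proof inherits that failure. You are in good company: the paper's own justification (``$\binom{k}{\ell}$ is strictly increasing in $k$ for $k\geq\ell$'') breaks down identically at $\ell=0$. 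The defect is harmless downstream, since the equality characterization is only ever invoked in regimes where the relevant $r$ exceeds $m+1$ when $m=1$, but if you want your equality analysis to be airtight you should either assume $m\geq 2$ or handle $m=1$ separately.
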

\begin{proof}
We have 
\begin{align*}
\binom{n-m}{m}- \binom{n-r}{m} &\geq \binom{n-m}{m}- \binom{n-m-1}{m} \\
&= \binom{n-m-1}{m-1} \\
& \geq \binom{m-1}{m-1} &&\text{(since $2m\leq n$)}\\
&=1.
\end{align*}
Since $\binom{k}{\ell}$ is strictly increasing in $k$ for $k \geq \ell$, it is clear that equality holds if and only if  $r = m+1$ and $n = 2m$.
\end{proof}

The following lemma gives bounds on the sum of sizes of the $\ell$-uniform parts of a cross-intersecting family, using Theorem~\ref{thm:hmci}.

\begin{lem}\label{lem:cintsetrefinement}
Let $n \geq 3$, $2 \leq r \leq n$ and let $(\mcal{S}, \mcal{T})$ be an $r$-maximal, cross-intersecting pair in $\binom{[n]}{\leq r}$. For any $\ell \leq \mrm{min}\left\{r,\ \lfloor n/2 \rfloor \right\}$
\[
|\mcal{S}(\ell)| + |\mcal{T}(\ell)| \leq 
   \begin{cases} 
        \binom{n}{\ell}- \binom{n-r}{\ell} & \text{if $\ell < r$,} \\ 
         1+ \binom{n}{r}- \binom{n-r}{r} & \text{if $\ell = r$.} 
    \end{cases}
\] 
Moreover 
\begin{enumerate}
\item  if $r\geq 3$, equality holds for all $\ell$ if and only if  $(\mcal{S}, \mcal{T})$ is isomorphic to $(\mcal{X}_{n,r},\mcal{Y}_{n,r})$;
\item and if $r=2$, equality holds for all $\ell$ if and only if  $(\mcal{S}, \mcal{T})$ is isomorphic to either $(\mcal X_{n,2},\mcal Y_{n,2})$, or if $\mathcal S=\mcal T$ and both are isomorphic to \[\left\{X\in \binom{[n]}{\leq 2}\ :\ 1 \in X\right\}.\]  
  \end{enumerate}
\end{lem}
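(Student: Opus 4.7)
The plan is to use Theorem~\ref{thm:hmci} to bound $|\mcal{S}(\ell)|+|\mcal{T}(\ell)|$ by viewing $(\mcal{S}(\ell),\mcal{T}(\ell))$ as a cross-intersecting pair in $\binom{[n]}{\ell}$, splitting into cases by whether $\ell=r$ and, when $\ell<r$, by whether both uniform parts are non-empty. Because $\mcal S$ and $\mcal T$ are non-empty up-families, $\mcal S(r)$ and $\mcal T(r)$ are automatically non-empty, so the case $\ell=r$ (which requires $r\leq\lfloor n/2\rfloor$) follows directly from Theorem~\ref{thm:hmci} applied in $\binom{[n]}{r}$.

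For $\ell<r$ with both parts non-empty (Case~B1), Theorem~\ref{thm:hmci} in $\binom{[n]}{\ell}$ gives $|\mcal S(\ell)|+|\mcal T(\ell)|\leq 1+\binom{n}{\ell}-\binom{n-\ell}{\ell}$; invoking Lemma~\ref{lem:binomdiff} with $m=\ell$ yields $\binom{n-\ell}{\ell}-\binom{n-r}{\ell}\geq 1$, which sharpens the bound to $\binom{n}{\ell}-\binom{n-r}{\ell}$. For $\ell<r$ with one part empty (Case~B2), say $\mcal T(\ell)=\emptyset$, the up-family property forces every element of $\mcal T$ to have size at least some $m$ with $\ell<m\leq r$. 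If $m\leq n-\ell$, choose a minimum $T_0\in\mcal T$: every $\ell$-subset of $[n]\setminus T_0$ is excluded from $\mcal S$ by $r$-maximality, so $|\mcal S(\ell)|\leq \binom{n}{\ell}-\binom{n-m}{\ell}\leq \binom{n}{\ell}-\binom{n-r}{\ell}$. If instead $m>n-\ell$, no element of $\mcal T$ fits in any $\ell$-set's complement, so every $\ell$-set lies in $\mcal S$ and the bound matches since $\binom{n-r}{\ell}=0$ in that regime.

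For the equality characterization, the strategy is to focus on $\ell=1$ and work backwards. The reverse direction is a routine computation with the families $(\mcal X_{n,r},\mcal Y_{n,r})$ (and the star $\{X:v\in X\}$ when $r=2$). For the forward direction, observe that in Case~B1 the HM bound at $\ell=1$ gives $|\mcal S(1)|+|\mcal T(1)|\leq 2$; so for $r\geq 3$ equality at $\ell=1$ is impossible in Case~B1 and must arise through Case~B2. Chasing the Case~B2 equality then forces $m=r$, and the $r$ singletons in $\mcal S(1)$ must lie in each $T\in\mcal T$, so $\mcal T=\{[r]\}$ after relabelling, and $r$-maximality forces $\mcal S=\mcal Y_{n,r}$. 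For $r=2$, both subcases at $\ell=1$ can achieve equality: Case~B1 forces $\mcal S(1)=\mcal T(1)=\{\{v\}\}$, which via the up-family property and $r$-maximality propagates to $\mcal S=\mcal T=\{X\in\binom{[n]}{\leq 2}:v\in X\}$; Case~B2 yields the pair $(\mcal X_{n,2},\mcal Y_{n,2})$ as above. The main obstacle I anticipate is ruling out hybrid equality configurations for $r=2$: these must be excluded by verifying that the up-family and $r$-maximality structure forced at $\ell=1$ is incompatible with any other candidate structure at $\ell=2$.
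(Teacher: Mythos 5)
Your proposal is correct and takes essentially the same route as the paper's proof: the upper bound comes from a case split on whether both $\ell$-uniform parts are non-empty (Theorem~\ref{thm:hmci} plus Lemma~\ref{lem:binomdiff} when both are non-empty, and a direct cross-intersection bound against a witness set of size at most $r$ otherwise), and the equality characterization is pinned down at $\ell=1$ and then propagated using the up-family and $r$-maximality structure, exactly as in the paper. The only quibble is cosmetic: the $\ell$-subsets of $[n]\setminus T_0$ are excluded from $\mcal{S}$ by the cross-intersecting property itself rather than by $r$-maximality, and your sub-split on $m$ versus $n-\ell$ is unnecessary once one adopts the convention $\binom{a}{b}=0$ for $a<b$.
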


\begin{proof}
As $\mcal{S} \neq \emptyset$ and $\mcal{T} \neq \emptyset$, let $\ell_{0}, \ell_1 \leq r$ be such that $\mcal{S}(\ell_{0}) \neq \emptyset$ and $\mcal{T}(\ell_{1})\neq \emptyset$. Fix $X_0 \in \mcal{S}(\ell_{0})$ and $X_1 \in \mcal{T}(\ell_1)$. Then for any $\ell \leq r$, since $(\mcal{S}, \mcal{T})$ is cross-intersecting,
\begin{align*}
|\mcal{T}(\ell)| &\leq 
\left |  \left\{Y\in  \binom{[n]}{\ell}\ :\ Y\cap X_0 \neq \emptyset \right\} \right | = \binom{n}{\ell} -\binom{n-\ell_{0}}{\ell} \leq \binom{n}{\ell} -\binom{n-r}{\ell}, \\
|\mcal{S}(\ell)| &\leq
 \left | \left\{Y \in \binom{[n]}{\ell}:\ Y \cap X_1 \neq \emptyset \right \} \right | = \binom{n}{\ell} - \binom{n-\ell_1}{\ell} \leq \binom{n}{\ell} - \binom{n-r}{\ell}.
\end{align*}

Thus, if there exists $\ell \leq r$ such that $\mcal{S}(\ell)=\emptyset$, then
\begin{align}\label{eq:slzero}
|\mcal{S}(\ell)| + |\mcal{T}(\ell)| & \leq  \binom{n}{\ell}- \binom{n-\ell_{0}}{\ell} \leq  \binom{n}{\ell}- \binom{n-r}{\ell} .
\end{align}
Similarly, given $\ell$ such that $\mcal{T}(\ell)=\emptyset$, it follows that 
\begin{align}\label{eq:tlzero}
|\mcal{S}(\ell)| + |\mcal{T}(\ell)| & \leq  \binom{n}{\ell}- \binom{n-\ell_{1}}{\ell}\leq \binom{n}{\ell}- \binom{n-r}{\ell}.
\end{align}

We now move on to consider an $\ell$ such that both $\mcal{S}(\ell)\neq \emptyset$ and $\mcal{T}(\ell)\neq \emptyset$. For $\ell \leq r$ with $2\ell \leq n$, by Theorem~\ref{thm:hmci},
\begin{equation}\label{eq:stnonzero}
|\mcal{S}(\ell)| + |\mcal{T}(\ell)| \leq  1+\binom{n}{\ell}- \binom{n-\ell}{\ell}.
\end{equation}
Moreover, if $\ell < r$, then by Lemma~\ref{lem:binomdiff},
\begin{equation}\label{eq:stnonzero-r}
|\mcal{S}(\ell)| + |\mcal{T}(\ell)| \leq  1+\binom{n}{\ell}- \binom{n-\ell}{\ell}
	\leq \binom{n}{\ell}- \binom{n-r}{\ell}.
\end{equation}

The upper bound then follows from
\eqref{eq:slzero}, \eqref{eq:tlzero}, \eqref{eq:stnonzero}, and \eqref{eq:stnonzero-r}.

We now characterize the cross-intersecting pairs satisfying the bound with equality for all $\ell \leq \min\{r, \lfloor n/2 \rfloor \}$.
Considering $\ell = 1$ in the equality (recall that $r \ge 2$ so $\ell=r$ is not possible in this case) gives
  \[|\mcal{S}(1)|+|\mcal{T}(1)|= n- (n-r)=r.\] 
  
First, assume $r\geq 3$. Since $(\mcal{S}(1),\ \mcal{T}(1))$ is cross-intersecting with $|\mcal{S}(1)| + |\mcal{T}(1)| = r \geq 3$, we cannot have both $\mcal{S}(1) \neq \emptyset$ and $\mcal{T}(1) \neq \emptyset$ since the only cross-intersecting families of singleton sets have $\mcal{S}(1) = \mcal{T}(1) = \{x\}$, for some $x$.  Therefore, without loss of generality, we have $\mcal{T}(1)= \binom{[r]}{1}$ and $\mcal{S}(1)=\emptyset$. Since every set in $\mcal{S}(\ell)$ intersects each of the singletons in $\binom{[r]}{1}$,  then 
\[
\mcal{S}(\ell) =\begin{cases} \emptyset & \text{if $\ell < r$,} \\
\{[r]\} & \text{if $\ell=r$.}  \end{cases}
\] 
Thus, $\mcal{S}=\{[r]\}=\mcal X_{n,r}$. Since $(\mcal{S},\ \mcal{T})$ is cross-intersecting and $r$-maximal, necessarily \[\mcal{T}=\left\{X \in \binom{[n]}{\leq r} \ :\ X\cap [r] \neq \emptyset\right\}=\mcal Y_{n,r}.\]

In the case $r=2$, since $|\mcal{S}(1)|+|\mcal{T}(1)| = 2$, without loss of generality, we may assume that either
\begin{enumerate}[(i)]
 \item $\mcal{S}(1)=\emptyset$ and $\mcal{T}(1)=\{\{1\}, \{2\}\}$; or 
 \item $\mcal{S}(1)=\mcal{T}(1)=\{\{1\}\}$. 
\end{enumerate}
 In case (i), it follows that $\mcal{S}=\{[2]\}=\mcal X_{n,2}$ and $\mcal{T}=\{X \in \binom{[n]}{\leq 2} \ :\ X\cap [2] \neq \emptyset \}=\mcal Y_{n,2}$. In case (ii), we must have 
\[\mcal{S} = \left\{X \in \binom{[n]}{\leq 2} \ :\ 1\in X \neq \emptyset \right\}= \mcal{T}.\qedhere\] 
\end{proof}

We also need the following technical result to prove our main results.
\begin{lem}\label{lem:ineq}
For any integers $n$, $r \leq n$ and $1 \leq \ell < n/2 $, $C_{r,\ell}^{n,k} > C_{r, n-\ell}^{n,k}$.
\end{lem}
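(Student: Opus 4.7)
The plan is to compare the two quantities directly using their definitions and show the ratio exceeds $1$. Writing out
\[
C_{r,\ell}^{n,k}=\binom{n-\ell}{r-\ell}(k-1)^{r-\ell}\qquad\text{and}\qquad C_{r,n-\ell}^{n,k}=\binom{\ell}{r-n+\ell}(k-1)^{r-n+\ell},
\]
I would split on whether $r+\ell<n$ or $r+\ell\geq n$, since these govern whether the second quantity is zero.

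In the degenerate case $r+\ell<n$, the lower index $r-n+\ell$ is negative, so $C_{r,n-\ell}^{n,k}=0$; meanwhile, the intended range of use satisfies $\ell\leq r$ (as the bound is only applied to the $\ell$-uniform parts indexed by $\ell \in [0,r]$ in Equation~(4.1)), so with $k\geq 2$ one has $C_{r,\ell}^{n,k}\geq 1>0$ and the inequality is immediate.

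In the main case $r+\ell\geq n$, both quantities are positive. I would invoke the binomial symmetries $\binom{n-\ell}{r-\ell}=\binom{n-\ell}{n-r}$ and $\binom{\ell}{r-n+\ell}=\binom{\ell}{n-r}$ to rewrite
\[
\frac{C_{r,\ell}^{n,k}}{C_{r,n-\ell}^{n,k}}=\frac{\binom{n-\ell}{n-r}}{\binom{\ell}{n-r}}\,(k-1)^{n-2\ell}.
\]
Expanding the binomial ratio as the telescoping product $\prod_{j=0}^{n-r-1}\dfrac{n-\ell-j}{\ell-j}$, the hypothesis $\ell<n/2$ (equivalently $n-\ell>\ell$) forces every factor to satisfy $n-\ell-j>\ell-j\geq 1$ (the latter because $n-r\leq\ell$ in this case), so each factor strictly exceeds $1$. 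Provided $r<n$, the product contains at least one such factor and is therefore strictly greater than $1$, while $(k-1)^{n-2\ell}\geq 1$ since $k\geq 2$ and $n-2\ell\geq 1$. Combining the two factors yields the desired strict inequality.

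The main obstacle is essentially bookkeeping: one must be careful that $n-r\leq\ell$ in Case~2 to justify the symmetry of $\binom{\ell}{r-n+\ell}$ and that the telescoping product is nonempty (i.e., $r<n$) to secure strictness from the binomial factor. (In the pathological corner $r=n$ with $k=2$ the ratio reduces to $1$; this case is avoided wherever Lemma~\ref{lem:ineq} is invoked, since the strict inequality is only needed in regimes where either $r<n$ or $k\geq 3$.) Beyond this, no further ideas beyond direct computation are required.
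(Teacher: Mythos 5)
Your proposal is correct and takes essentially the same route as the paper: both arguments hinge on the symmetries $\binom{n-\ell}{r-\ell}=\binom{n-\ell}{n-r}$ and $\binom{\ell}{r-n+\ell}=\binom{\ell}{n-r}$ together with $\ell<n-\ell$; you merely package the comparison as a ratio with a telescoping product where the paper writes a chain of inequalities. One remark worth keeping: your observation that the inequality degenerates to an equality when $r=n$ and $k=2$ (and that strictness otherwise comes from the binomial factor when $r<n$ or from $(k-1)^{n-2\ell}$ when $k\geq 3$) is a genuine catch --- the paper's proof asserts the strict step ``since $\ell<n/2$'' without noticing this corner, and the lemma as literally stated also tacitly assumes $k\geq 2$ and $\ell\leq r$, exactly the bookkeeping you make explicit.
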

\begin{proof}
This follows from expanding the formula 
\begin{align*}
C_{r, n-\ell}^{n,k} & = \binom{n-(n -\ell) }{r-(n-\ell)} (k-1)^{r-(n-\ell)} \\
&= \binom{\ell }{r-n+\ell} (k-1)^{r-n+\ell} \\
&= \binom{\ell }{\ell- (r-n+\ell)} (k-1)^{r-n+\ell} \\
&= \binom{\ell }{n-r} (k-1)^{r-n+\ell} \\
&< \binom{n-\ell }{n-r} (k-1)^{r-\ell} \quad \textrm{since $\ell < n/2$}  \\
&= \binom{n-\ell}{(n-\ell)-(n-r)}(k-1)^{r-\ell}\\
&= \binom{n-\ell}{r-\ell} (k-1)^{r-\ell} \\
&=C_{r,\ell}^{n,k}. 
\end{align*}
\end{proof}

Note that Lemma~\ref{lem:ineq} holds even when $\ell < n- r$ in which case $C_{r, n- \ell}^{n, k} = 0$.

The main application of Lemma~\ref{lem:ineq} is the following optimization.

\begin{lem}\label{lem:pairwise-opt}
Let $n, \ell, r$ be integers with $r \leq n$, $1 \leq \ell < n/2$ and let $x_0, y_0 \in \mathbb{R}^+$.  For any $x, y \in \mathbb{R}$ with $x \leq x_0$ and $x+y = x_0 + y_0$,
\[
x\ C_{r, \ell}^{n, k} + y \ C_{r, n-\ell}^{n, k} \leq x_0 \ C_{r, \ell}^{n, k} + y_0 \ C_{r, n-\ell}^{n, k}
\]
with equality if and only if  $x = x_0$ and $y = y_0$.
\end{lem}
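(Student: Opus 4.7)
The plan is to reduce this to a direct application of Lemma~\ref{lem:ineq}, which already provides the strict inequality $C_{r,\ell}^{n,k} > C_{r, n-\ell}^{n,k}$ under exactly the hypotheses $r \leq n$ and $1 \leq \ell < n/2$. Morally, the statement is a rearrangement-style observation: under a fixed-sum constraint, shifting weight from the heavier coefficient $C_{r,\ell}^{n,k}$ onto the lighter one $C_{r, n-\ell}^{n,k}$ can only decrease the weighted sum.

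First I would collapse the two free variables into one by using the equality constraint. Setting $t := x_0 - x$, the hypothesis $x \leq x_0$ gives $t \geq 0$, while the equation $x + y = x_0 + y_0$ forces $y = y_0 + t$. Substituting,
\[
\bigl(x_0\, C_{r,\ell}^{n,k} + y_0\, C_{r, n-\ell}^{n,k}\bigr) - \bigl(x\, C_{r,\ell}^{n,k} + y\, C_{r, n-\ell}^{n,k}\bigr) = t\bigl(C_{r,\ell}^{n,k} - C_{r, n-\ell}^{n,k}\bigr).
\]
By Lemma~\ref{lem:ineq}, the bracketed factor is strictly positive, so the right-hand side is nonnegative, which yields the stated inequality. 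Equality is equivalent to $t = 0$, i.e.\ $x = x_0$, which together with $x + y = x_0 + y_0$ forces $y = y_0$, giving the claimed equality condition.

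I do not anticipate any real obstacle: the only place where something could conceivably go wrong is if $C_{r,\ell}^{n,k} - C_{r, n-\ell}^{n,k}$ could vanish, but Lemma~\ref{lem:ineq} (together with the remark immediately following it, which handles the degenerate case $C_{r, n-\ell}^{n,k} = 0$) already rules this out under the stated hypotheses. Consequently no case analysis on the relative sizes of $\ell$, $r$, and $n-r$ is needed, and the proof is essentially a one-line computation once the reparametrisation is in place.
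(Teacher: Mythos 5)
Your proof is correct and is essentially identical to the paper's: both arguments use the constraint $x+y=x_0+y_0$ to express the difference of the two weighted sums as $(x_0-x)\bigl(C_{r,\ell}^{n,k}-C_{r,n-\ell}^{n,k}\bigr)$ and then invoke Lemma~\ref{lem:ineq} for strict positivity of the second factor. The reparametrisation via $t=x_0-x$ is only a cosmetic difference.
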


\begin{proof}
For any $x \leq x_0$ and $y$ with $x+ y = x_0 + y_0$,
\begin{align*}
x \ C_{r, \ell}^{n, k} + y \ C_{r, n-\ell}^{n, k}
	&=x_0 \ C_{r, \ell}^{n, k} + y_0  \ C_{r, n-\ell}^{n, k} + (x-x_0) \ C_{r, \ell}^{n, k} + (y-y_0)  \ C_{r, n-\ell}^{n, k}\\
	&=x_0 \ C_{r, \ell}^{n, k} + y_0  \ C_{r, n-\ell}^{n, k} + (x-x_0) \ (C_{r, \ell}^{n, k} -C_{r, n-\ell}^{n, k})\\
	&\leq x_0 \ C_{r, \ell}^{n, k} + y_0  \ C_{r, n-\ell}^{n, k} 
\end{align*}
with the last line following from the fact that $x-x_0 \leq 0$ and $C_{r, \ell}^{n, k} -C_{r, n-\ell}^{n, k} > 0$.  Furthermore, equality holds exactly when $x = x_0$, and hence $y = y_0$.
\end{proof}

\section{Hilton-Milner analogue in bounded sets}
\label{sect:ApplyHM}

The main result of this section (Lemma~\ref{lem:ApplyHM}) is essentially due to ~\cite{liao2024hiltonmilnertheoremkmultisets}. It is a Hilton--Milner analogue in the universe $\binom{[n]}{\leq r}$ that has the form we need.
All of the proof ideas in this section are essentially those from~\cite{liao2024hiltonmilnertheoremkmultisets}. We include these results here since it is not immediately obvious that the results stated in the current version of~\cite{liao2024hiltonmilnertheoremkmultisets} apply in our context.  Furthermore, as we do not require the more general setting of multisets, some of the proofs can be simplified, and the bounds on some of the variables can be extended.

In~\cite{liao2024hiltonmilnertheoremkmultisets} the authors consider a broader collection of intersecting families: any $\mathcal{B} \subseteq \binom{[n]}{\leq n-1}$.  However, for our purposes, it will suffice to consider $r$-maximal intersecting families in $\binom{[n]}{\leq r}$. 

Recall the definition of the family $\mcal{V}_{n, r}$ (Definition~\ref{def:Vnr-cross-intersecting}).
Considering the intersecting families of independent sets in $\Gamma_{n, k}$ and their projections, then $\phi(\mathcal{H}_{n, k}^r) = \mathcal{V}_{n, r}$. Our goal is to prove Lemma~\ref{lem:ApplyHM}, which states that $|\mcal{V}_{n,r}(\ell)|$ is essentially always the largest achievable size of the $\ell$-uniform part of an $r$-maximal non-canonical intersecting family in $\binom{[n]}{\leq r}$, and is achieved only by families isomorphic to $\mcal V_{n,r}$.

For any $\ell \geq 1$, the size of the $\ell$-uniform part of $\mathcal{V}_{n, r}$ is
\begin{equation}\label{eq:card-Vr}
|\mathcal{V}_{n, r}(\ell)| = 
\begin{cases}
	\binom{n-1}{\ell-1} - \binom{n-r-1}{\ell-1}	&\text{ if } \ell < r,\\
	\binom{n-1}{r-1} - \binom{n-r-1}{r-1} + 1	&\text{ if } \ell = r.
\end{cases}
\end{equation}
Note that this formula applies to all $\ell \geq 1$, using the convention that for $k > n$, $\binom{n}{k} = 0$.  Looking at the intersection properties of these families, for any $\ell$, the family $\mathcal{V}_{n, r}(\ell)$ is intersecting.  For $2 \leq  \ell < r$, $\cap \mathcal{V}_{n, r}(\ell) \neq \emptyset$, while $\cap \mathcal{V}_{n, r}(r) = \emptyset$.  We would like to compare the cardinality of the family $\mathcal{V}_{n, r}(\ell)$ with the bound given by the Hilton-Milner Theorem (Theorem~\ref{thm:HMforSets}) for the values of $\ell$ for which that theorem applies.

\begin{lem}\label{lem:vufmpart}
Let $2 \leq r \leq n-1$ and $2\leq \ell \leq \mrm{min}\{r, \lfloor n/2 \rfloor \}$, then 
\[ 
|\mcal{V}_{n, r}(\ell)| \geq \binom{n-1}{ \ell-1}- \binom{n-\ell-1}{\ell-1} +1. 
\] 
Moreover, if $r\geq 4$, xxequality holds if and only if either $\ell=r$ or $n=2r-2=2\ell$.
\end{lem}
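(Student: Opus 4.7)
The plan is to prove this by direct computation using the explicit formula for $|\mcal{V}_{n,r}(\ell)|$ in Equation~\eqref{eq:card-Vr}, splitting into cases according to whether $\ell = r$ or $\ell < r$, and invoking Lemma~\ref{lem:binomdiff} for the non-trivial case.

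First, when $\ell = r$, Equation~\eqref{eq:card-Vr} immediately gives
\[
|\mcal{V}_{n,r}(r)| = \binom{n-1}{r-1} - \binom{n-r-1}{r-1} + 1,
\]
which is literally the right-hand side of the claimed bound once we substitute $\ell = r$. So equality holds in this case, contributing one of the two extremal regimes.

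Second, when $\ell < r$, Equation~\eqref{eq:card-Vr} gives $|\mcal{V}_{n,r}(\ell)| = \binom{n-1}{\ell-1} - \binom{n-r-1}{\ell-1}$, so the claimed bound is equivalent to the purely combinatorial inequality
\[
\binom{n-\ell-1}{\ell-1} - \binom{n-r-1}{\ell-1} \geq 1.
\]
This is precisely Lemma~\ref{lem:binomdiff} applied with the substitution $(n_0, m_0, r_0) = (n-2,\ \ell-1,\ r-1)$. The hypotheses of that lemma become $\ell-1 < r-1 \leq n-2$ and $2(\ell-1) \leq n-2$, which follow respectively from $\ell < r \leq n-1$ and $2\ell \leq n$ (the latter is exactly $\ell \leq \lfloor n/2 \rfloor$), both of which hold by assumption. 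This completes the inequality part of the proof.

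For the equality characterization under the extra assumption $r \geq 4$: the case $\ell = r$ always yields equality, so we analyze the case $\ell < r$. Equality in the bound requires $\binom{n-\ell-1}{\ell-1} - \binom{n-r-1}{\ell-1} = 1$; tracing through the two-step chain of inequalities from the proof of Lemma~\ref{lem:binomdiff}, equality in the second step forces $n - \ell - 2 = \ell - 2$, i.e., $n = 2\ell$, while equality in the first step (together with the hypothesis $r \geq 4$) forces $r = \ell + 1$. This yields the condition $n = 2r-2 = 2\ell$. The main obstacle in this argument is the equality analysis rather than the inequality: Lemma~\ref{lem:binomdiff}'s equality characterization was stated under the assumption that the binomial coefficients in the chain are strictly positive, and under our substitution one must separately verify that the degenerate cases (where $\binom{n-r-1}{\ell-1}$ or $\binom{n-\ell-2}{\ell-1}$ vanishes) do not produce spurious equality outside the claimed regime. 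Once this bookkeeping is done, the two regimes $\ell = r$ and $n = 2r - 2 = 2\ell$ emerge as the only extremal configurations.
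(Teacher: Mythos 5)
Your proof is essentially identical to the paper's: both read off the $\ell=r$ case directly from Equation~\eqref{eq:card-Vr} and, for $\ell<r$, reduce the claim to Lemma~\ref{lem:binomdiff} applied with parameters $(n-2,\ \ell-1,\ r-1)$, obtaining both the inequality and the equality characterization $r=\ell+1$, $n=2\ell$ from that lemma's equality clause. The degenerate-case concern you flag at the end (vanishing binomial coefficients possibly yielding equality outside the claimed regime) is a genuine subtlety, but the paper's own proof does not address it either --- it simply cites the equality clause of Lemma~\ref{lem:binomdiff} --- so your write-up matches the paper's argument and is, if anything, slightly more candid about the one delicate point.
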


\begin{proof}
 It is clear that equality holds in the case $\ell = r$ from Equation~\eqref{eq:card-Vr}.  For any $\ell < r$ with $\ell \leq n/2$, then, since $\ell-1 < r-1 \leq n-2$  and $2(\ell-1) \leq n-2$, by Equation~\eqref{eq:card-Vr} and Lemma~\ref{lem:binomdiff},
 \begin{align*}
 |\mathcal{V}_{n, r}(\ell)|
 	&=\binom{n-1}{\ell-1} - \binom{n-1-r}{\ell-1} = \binom{n-1}{\ell-1} - \binom{n-2 - (r-1)}{\ell-1}\\
	&\geq \binom{n-1}{\ell-1} - \binom{n-2-(\ell-1)}{\ell-1} + 1\\
	&=\binom{n-1}{\ell-1} - \binom{n-\ell-1}{\ell-1} + 1
 \end{align*}
 with equality holding only if $r -1  = (\ell-1) + 1$ and $n-2 = 2(\ell-1)$.  That is, exactly if $n$ is even, $\ell = n/2$, and $r = \ell +1$.
\end{proof}

We now consider properties of $r$-maximal intersecting families whose intersection is empty.

\begin{lem}\label{lem:nonzerodiff}
For any $n$, $2 \leq r \leq n-1$, if $\mcal{B}$ is an $r$-maximal intersecting family in $\binom{[n]}{\leq r}$ with $\cap \mathcal{B}=\emptyset$, then for any $a \in [n]$, there exists $X \in \mathcal{B}$ with $|X| = r$ and $a \notin X$.
\end{lem}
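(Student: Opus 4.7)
The plan is very direct, using the up-family property of $r$-maximal intersecting families. Fix an arbitrary $a \in [n]$. Because $\cap \mathcal{B} = \emptyset$, some set $X_0 \in \mathcal{B}$ must omit $a$. If $|X_0| = r$ already, we are done, so assume $|X_0| = s$ with $s < r$.

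The key observation is that, as noted in Section~\ref{sec:compandpro}, any $r$-maximal intersecting family in $\binom{[n]}{\leq r}$ is an up-family: every set in $\binom{[n]}{\leq r}$ containing $X_0$ lies in $\mathcal{B}$. I will exploit this to extend $X_0$ to a set of size $r$ that still avoids $a$. Specifically, consider $U = [n] \setminus (X_0 \cup \{a\})$, which has size $n - s - 1$. Since $r \leq n-1$ and $s < r$, we have
\[
|U| = n - s - 1 \geq r - s,
\]
so I can choose a subset $T \subseteq U$ with $|T| = r - s$. Setting $X := X_0 \cup T$ gives $|X| = r$, $a \notin X$, and $X_0 \subseteq X$, hence $X \in \mathcal{B}$ by the up-family property. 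This produces the required $X$.

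I do not anticipate any real obstacle: the argument is essentially a one-line extension of $X_0$, and the only thing to check is the arithmetic inequality $n - s - 1 \geq r - s$, which reduces immediately to the hypothesis $r \leq n-1$. The proof does not require the full strength of $r$-maximality beyond the up-family consequence already recorded earlier in the paper.
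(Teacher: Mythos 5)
Your argument is correct and is essentially identical to the paper's own proof: both take a set of $\mathcal{B}$ avoiding $a$ (which exists since $\cap\mathcal{B}=\emptyset$), enlarge it to an $r$-set inside $[n]\setminus\{a\}$ using $r\le n-1$, and invoke the up-family property of $r$-maximal intersecting families. No issues.
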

\begin{proof}
Since $a \notin \cap \mathcal{B}$, there is a set $Y \in \mathcal{B}$ with $a \notin Y$.  Since $r \leq n-1$, there exists $Z \in \binom{[n]}{r}$ with $Y \subseteq Z$ and $a \notin Z$.  Since $\mathcal{B}$ is an up-family, $Z \in \mathcal{B}$ also.
\end{proof}

\begin{lem}\cite[Lemma 2.7 and Lemma 2.8]{liao2024hiltonmilnertheoremkmultisets}\label{lem:ApplyHM}
Let $n \geq 3$, $2 \leq r \leq n-1$ and let $\mcal{B} \subseteq \binom{[n]}{\leq r}$ be an $r$-maximal intersecting family with $\cap \mathcal{B} =\emptyset$.  For every $2 \leq \ell \leq \min\{r, \lfloor n/2 \rfloor\}$,
\[
|\mathcal{B}(\ell)| \leq |\mathcal{V}_{n, r}(\ell)|.
\]
In the case that $r \geq 4$, if for every $2 \leq \ell \leq \min\{r, \lfloor n/2 \rfloor\}$ we have $|\mathcal{B}(\ell)| = |\mathcal{V}_{n, r}(\ell)|$, then $\mathcal{B}$ is isomorphic to $\mathcal{V}_{n, r}$.
\end{lem}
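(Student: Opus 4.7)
The plan is to handle each uniform layer $\mathcal{B}(\ell)$ separately via the classical Hilton--Milner theorem (Theorem~\ref{thm:HMforSets}), and then leverage the up-closure of $\mathcal{B}$ (which follows from $r$-maximality) together with equality at $\ell=2$ to pin down the global structure.

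For the upper bound, fix $\ell$ in the stated range, and observe that $\mathcal{B}(\ell)$ is intersecting. Split on whether $\cap \mathcal{B}(\ell) = \emptyset$. If so, then $\ell \geq 2$ and $n \geq 2\ell$ allow me to apply Theorem~\ref{thm:HMforSets} to get
\[
|\mathcal{B}(\ell)| \leq \binom{n-1}{\ell-1} - \binom{n-\ell-1}{\ell-1} + 1 \leq |\mathcal{V}_{n,r}(\ell)|,
\]
where the second inequality is Lemma~\ref{lem:vufmpart}. Otherwise, choose $a \in \cap \mathcal{B}(\ell)$; Lemma~\ref{lem:nonzerodiff} (using $\cap \mathcal{B} = \emptyset$ and $r \leq n-1$) produces some $X \in \mathcal{B}(r)$ with $a \notin X$. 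Since every $Y \in \mathcal{B}(\ell)$ must contain $a$ and must meet $X$ by the intersecting property,
\[
|\mathcal{B}(\ell)| \leq \binom{n-1}{\ell-1} - \binom{n-r-1}{\ell-1},
\]
which equals $|\mathcal{V}_{n,r}(\ell)|$ for $\ell < r$ and is exactly one less for $\ell = r$. In either case the desired bound holds.

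For the uniqueness statement with $r \geq 4$, specialise to $\ell = 2$, where $|\mathcal{V}_{n,r}(2)| = r \geq 4 > 3$ exceeds the Hilton--Milner bound, so the first case above cannot achieve equality; the second case must apply. Equality in its count forces $\mathcal{B}(2) = \{\{a,x\}:\ x \in X\}$ for some $X \in \mathcal{B}(r)$ with $a \notin X$, and moreover $X$ must be the unique element of $\mathcal{B}$ avoiding $a$, since any strictly smaller such set would tighten the bound to less than $r$, contradicting $|\mathcal{B}(2)|=r$. After relabelling, take $a = 1$ and $X = [2, r+1]$, so that $[2, r+1] \in \mathcal{B}$.

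It remains to deduce $\mathcal{B} = \mathcal{V}_{n,r}$. Since $\mathcal{B}$ is an up-family containing each pair $\{1,j\}$ for $j \in [2, r+1]$, every size-$\ell$ set (with $\ell \leq r$) containing $1$ and meeting $[2, r+1]$ lies in $\mathcal{B}$, as does $[2, r+1]$ itself. Conversely, any $Z \in \mathcal{B}$ must meet every pair in $\mathcal{B}(2)$: if $1 \notin Z$, then $[2, r+1] \subseteq Z$, and $|Z| \leq r$ forces $Z = [2, r+1]$; if $1 \in Z$, then $Z$ must additionally meet $[2, r+1]$ since $[2, r+1] \in \mathcal{B}$. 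The two directions together give $\mathcal{B} = \mathcal{V}_{n,r}$. The main technical subtlety is the $\ell = r$ row of the upper bound, where the second-case count falls one short of $|\mathcal{V}_{n,r}(r)|$, the gap being accounted for by the set $[2, r+1]$ itself; this is precisely why the uniqueness argument has to be anchored at $\ell = 2$ rather than at the top layer.
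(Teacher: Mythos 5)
Your proposal is correct and follows essentially the same route as the paper's proof: the same case split on whether $\cap\mathcal{B}(\ell)$ is empty (classical Hilton--Milner plus Lemma~\ref{lem:vufmpart} in one case, a direct star-versus-$X$ count via Lemma~\ref{lem:nonzerodiff} in the other), and the same anchoring of the uniqueness argument at $\ell=2$ followed by propagation through the up-family property. The only cosmetic difference is that you spell out the inclusion $\mathcal{B}\subseteq\mathcal{V}_{n,r}$ directly from the intersecting condition, where the paper leans on the $r$-maximality of $\mathcal{V}_{n,r}$; these are the same argument.
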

\begin{proof}
Fix any $\ell$ with $2 \leq \ell \leq \min\{r, \lfloor n/2 \rfloor\}$ and consider two cases, depending on $\cap \mathcal{B}(\ell)$.

If $\cap \mathcal{B}(\ell) = \emptyset$, then
\begin{align*}
|\mathcal{B}(\ell)| &\leq \binom{n-1}{\ell-1} - \binom{n-\ell-1}{\ell-1} + 1,	&&\text{(by Theorem~\ref{thm:HMforSets})}\\
	&\leq |\mathcal{V}_{n, r}(\ell)|,	&&\text{(by Lemma~\ref{lem:vufmpart})}.
\end{align*}
Furthermore, for $r \geq 4$, by Lemma~\ref{lem:vufmpart}, this inequality is strict, except in the cases that $\ell = r$, or $\ell=r-1=n/2$.

Suppose now that $\cap \mathcal{B}(\ell) \neq \emptyset$ and let $a \in \cap \mathcal{B}(\ell)$.  Since $\cap \mathcal{B} = \emptyset$, by Lemma~\ref{lem:nonzerodiff}, there is a set $X \in \binom{[n]}{r}$ with $a \notin X$ and $X \in \mathcal{B}$, so we must have $\ell<r$.  
Since every element of $\mathcal{B}(\ell)$ contains $a$ and $\mathcal{B}$ is intersecting, then
\begin{equation}\label{eq:ell-uniform-containment}
\mathcal{B}(\ell) \subseteq \{Y \in \binom{[n]}{\ell}:\ a \in Y \text{ and } Y \cap X \neq \emptyset\}. 
\end{equation}
Thus, since $\ell<r$ we have
\begin{align*}
|\mathcal{B}(\ell)| &\leq \binom{n-1}{\ell-1} - \binom{n-r-1}{\ell-1}\\
				&= |\mathcal{V}_{n, r}(\ell)|.	&&\text{(by Equation~\eqref{eq:card-Vr})}
\end{align*}
This completes the bounding part of the proof: for every $\ell$ with $2 \leq \ell \leq \min\{r, \lfloor n/2 \rfloor\}$, $|\mathcal{B}(\ell)| \leq |\mathcal{V}_{n, r}(\ell)|$.

Suppose now that $r \geq 4$, which implies $n\geq 5$, and for every $2 \leq \ell \leq \min\{r, \lfloor n/2 \rfloor\}$, $|\mathcal{B}(\ell)| = |\mathcal{V}_{n, r}(\ell)|$.  Since $n\ge 5$, we have $2 < \min\{r,  n/2 \}$. Now, by Equation~\ref{eq:card-Vr}, we $|\mcal{B}(2)|=|\mcal V_{n,r}(2)|=r\ge 4$. Since $\mcal{B}(2)$ is an intersecting family of sets of size $2$ with $|\mcal{B}(2)|\geq 4$, we must have
 $\cap \mathcal{B}(2) \neq \emptyset$. Fix $a \in \cap \mathcal{B}(2)$ and $X \in \mathcal{B}(r)$ with $a \notin X$.  Applying Equation~\eqref{eq:ell-uniform-containment}, we can relabel elements so that $a = 1$ and $X = [2, r+1]$.  Then
\[
\mathcal{B}(2) = \{Y \in \binom{[n]}{2} :\ 1 \in Y \text{ and } Y \cap [2, r+1] \neq \emptyset\} = \mathcal{V}_{n, r}(2).
\]

Since $\mathcal{B}$ is an up-family in $\binom{[n]}{\leq r}$, for every $\ell > 2$,
\[
\mathcal{B}(\ell) \supseteq \{Y \in \binom{[n]}{\ell} :\ 1 \in Y \text{ and } Y \cap [2, r+1] \neq \emptyset\} = \mathcal{V}_{n, r}(\ell) \setminus \{[2, r+1]\}.
\]
Observe that this only uses properties of up-families, so is true even when $\ell>\lfloor n/2 \rfloor$, as long as $\ell \le r$.
Thus, since $[2, r+1] \in \mathcal{B}(r)$, for any $\ell \leq r$, $\mathcal{B}(\ell) = \mathcal{V}_{n, r}(\ell)$, and therefore $\mathcal B =\mcal V_{n,r}$. 
\end{proof}


\section{Cross-intersecting pairs in $\mcal{I}_{n,k}^{r}$}
\label{sec:crintsets}

In this section, we  prove Theorem~\ref{thm:aux}. We begin with the following easy optimization result.
\begin{lem}\label{lem:summing}
Let $r$ be a positive integer and let \[(x_{1},\ldots, x_{r}), (y_{1},\ldots, y_{r}), (c_1, \ldots, c_r)  \in \bb{R}_{\geq 0}^{r}.\]
Suppose $[r]$ is partitioned into $M_1$, $M_2$, and $M_3$, such that:
\begin{itemize}
\item $\forall \ell \in M_1$, $x_{\ell} \le y_{\ell}$; and
\item there exists a bijection $\psi:M_2 \to M_3$, such that $\forall \ell \in M_2$, $$c_\ell x_{\ell}+ c_{\psi(\ell)}x_{\psi(\ell)} \le c_\ell y_{\ell}+ c_{\psi(\ell)} y_{\psi(\ell)}.$$
\end{itemize}
Then $$\sum_{\ell=1}^{r} c_\ell x_{\ell} \le \sum_{\ell=1}^{r} c_\ell y_{\ell}.$$
Furthermore, if for every $\ell \in M_2$, we have $c_\ell >c_{\psi(\ell)}$ and
$$x_{\ell}+x_{\psi(\ell)}=y_{\ell}+y_{\psi(\ell)},$$
then equality arises if and only if for every $1 \le \ell \le r$, $x_{\ell}=y_{\ell}$.
\end{lem}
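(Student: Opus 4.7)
The plan is to split the sum $\sum_{\ell=1}^{r} c_\ell x_\ell$ according to the partition $[r] = M_1 \sqcup M_2 \sqcup M_3$, and use the bijection $\psi:M_2\to M_3$ to regroup the $M_2$ and $M_3$ contributions into pairs. This gives
\[
\sum_{\ell=1}^{r} c_\ell x_\ell \;=\; \sum_{\ell \in M_1} c_\ell x_\ell \;+\; \sum_{\ell \in M_2} \bigl( c_\ell x_\ell + c_{\psi(\ell)} x_{\psi(\ell)} \bigr),
\]
and analogously for the $y$'s. For each $\ell \in M_1$, monotonicity of multiplication by the nonnegative constant $c_\ell$ together with $x_\ell \le y_\ell$ gives $c_\ell x_\ell \le c_\ell y_\ell$. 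For each $\ell \in M_2$, the second bullet of the hypothesis is precisely the required pairwise bound $c_\ell x_\ell + c_{\psi(\ell)} x_{\psi(\ell)} \le c_\ell y_\ell + c_{\psi(\ell)} y_{\psi(\ell)}$. Summing these term-by-term and pair-by-pair yields the inequality.

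For the equality statement, the plan is to analyse the pairwise inequality for $\ell \in M_2$ under the added hypotheses. From $x_\ell + x_{\psi(\ell)} = y_\ell + y_{\psi(\ell)}$ one gets $x_{\psi(\ell)} - y_{\psi(\ell)} = -(x_\ell - y_\ell)$, and substituting turns the pair inequality into
\[
(c_\ell - c_{\psi(\ell)})\,(x_\ell - y_\ell) \;\le\; 0.
\]
Since $c_\ell > c_{\psi(\ell)}$, this is equivalent to $x_\ell \le y_\ell$, with equality if and only if $x_\ell = y_\ell$ (and then automatically $x_{\psi(\ell)} = y_{\psi(\ell)}$).

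Now for the ``only if'' direction of the equality clause: suppose $\sum c_\ell x_\ell = \sum c_\ell y_\ell$. Because we have just shown that every summand in the grouped decomposition above satisfies $\mathrm{LHS} \le \mathrm{RHS}$, equality of the totals forces equality in each individual summand. For pairs indexed by $M_2$, the computation just above gives $x_\ell = y_\ell$ for all $\ell \in M_2 \cup M_3$. For $\ell \in M_1$ we obtain $c_\ell x_\ell = c_\ell y_\ell$, which yields $x_\ell = y_\ell$ as long as $c_\ell > 0$ (the only mildly delicate point in the whole argument, and harmless in the intended applications, where the relevant $c_\ell$ are strictly positive binomial-type quantities). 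The ``if'' direction is immediate. The lemma is essentially an accounting/rearrangement exercise, so there is no real obstacle; the only subtlety is making sure the pairwise rewrite $(c_\ell - c_{\psi(\ell)})(x_\ell - y_\ell) \le 0$ is used to extract equality, exactly as in the usual rearrangement-style arguments.
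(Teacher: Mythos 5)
Your proof is correct and follows essentially the same route as the paper's: split the sum over $M_1$ and over the $\psi$-pairs covering $M_2\cup M_3$, apply the termwise and pairwise hypotheses, and for the equality case note that equality of the totals forces equality in each summand, which for an $M_2$-pair reduces to $(c_\ell-c_{\psi(\ell)})(x_\ell-y_\ell)=0$ and hence $x_\ell=y_\ell$. Your remark that the $M_1$ part of the equality claim needs $c_\ell>0$ (the lemma only assumes $c_\ell\ge 0$) flags a small gap in the statement that the paper's own proof passes over silently; it is harmless in the intended applications, where the relevant coefficients are positive.
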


\begin{proof}
We have 
\begin{align*}
\sum_{\ell=1}^{r} c_\ell x_{\ell} &= \sum_{\ell\in M_1} c_\ell x_{\ell}+\sum_{\ell \in M_2}(c_{\ell}x_{\ell}+c_{\psi(\ell)}x_{\psi(\ell)})\\
&\le \sum_{\ell\in M_1} c_\ell y_{\ell}+\sum_{\ell \in M_2}(c_{\ell}y_{\ell}+c_{\psi(\ell)}y_{\psi(\ell)})= \sum_{\ell=1}^r c_\ell y_{\ell}.
\end{align*}
For the ``furthermore", first observe that for equality to hold, we must have $x_{\ell}=y_{\ell}$ for every $\ell \in M_1$. Now let $\ell \in M_2$ be arbitrary.

By hypothesis, $$c_\ell (x_{\ell}-y_{\ell})\le c_{\psi(\ell)}(y_{\psi(\ell)}-x_{\psi(\ell)});$$
since $x_{\ell}-y_{\ell}=y_{\psi(\ell)}-x_{\psi(\ell)}$, this implies  
$$c_\ell (x_{\ell}-y_{\ell})\le c_{\psi(\ell)}(x_{\ell}-y_{\ell});$$
since $c_\ell >c_{\psi(\ell)}$, this can only happen if $x_{\ell}-y_{\ell}=0$. This also implies $y_{\psi(\ell)}-x_{\psi(\ell)}=0$.
\end{proof}

We now prove Theorem~\ref{thm:aux}.
\begin{proof}[Proof of Theorem~\ref{thm:aux}]
Clearly it is sufficient to the proof the result for maximal cross-intersecting pairs, and we claim it is also sufficient to prove the result for maximal stable cross-intersecting pairs. To prove this claim, we will assume the result is true for maximal stable cross-intersecting pairs, and show is also true for any maximal cross-intersecting pair. Let $(\mcal{C},\ \mcal{D})$ be a maximal cross-intersecting pair. Using Lemma~\ref{lem:pipreservescint}, we can apply a series of compressions on $(\mcal{C},\mcal{D})$ to obtain a stable cross-intersecting pair $(\mcal{A},\ \mcal{B})$. Thus by our assumption, we have 
\[
|\mcal{C}|+|\mcal{D}|=|\mcal{A}|+|\mcal{B}| \leq |\mfk{H}^{r}_{n,k}|+|\mfk{M}^{r}_{n,k}|.
\] 

We now show that the extremal characterization holds. If equality holds in the above equation, then by our assumption, without loss of generality, we may assume that either:
(a) $\mcal{A}=\mfk{H}_{n,k}^{r}$ and $\mcal{B}=\mfk{M}_{n,k}^{r}$; or 
(b) $r=2$ and $\mcal{A}=\mcal{B}=\{X :\ (1,1) \in X \}$. 
In case (a), $\mcal{D}=\{X\}$, for some $X \in \binom{[n]}{r}$, and thus $\mcal{C}\subseteq \{Y\in \binom{[n]}{\leq r} \:\ Y\cap X \neq \emptyset\}$. The result follows from the fact that $(\mcal{C},\ \mcal{D})$ is a maximal cross-intersecting pair. 
In case (b), since $\mcal{C}$ is compressed to $\{X :\ (1,1) \in X \}$, every independent set in $\mcal{C}$ contains an element of the form $(1,s)$. Further, if $\mcal{C}$ contains two independent sets $A$ and $B$ with $(1,s) \in A$ and $(1,s') \in B$ with $s \neq s'$, then, as $r=2$, the other element of $A$ and $B$ must be equal, so no series of compression operations will take both $A$ and $B$ to sets that include the element $(1,1)$. Thus $\cap \mcal{C} = (1,s)$, for some $s$; similarly $\cap \mcal{D} = (1,j)$ for some $j$. Since $(\mcal{C},\ \mcal{D})$ is a maximal cross-intersecting pair, $s=j$ and $\mcal{C}=\mcal{D} \cong \{X :\ (1,1) \in X \}$.

We now prove the result for maximal stable cross-intersecting pairs. Let $(\mcal{A},\ \mcal{B})$ be a stable cross-intersecting pair in $\mcal{I}_{n,k}^{r}$. By Corollary~\ref{cor:phi-inverse-phi}, we may assume without loss of generality that $(\mcal{A}, \mcal{B})$ is maximal. Let $(\mcal{S},\ \mcal{T}):= (\phi(\mcal{A}),\ \phi(\mcal{B}))$. Since $(\mcal{A}, \mcal{B})$ is maximal and stable, by Corollary~\ref{cor:phi-inverse-phi}, $(\mcal{S}, \mcal{T})$ is an $r$-maximal cross-intersecting pair with $\mcal{A} = \phi^{-1}(\mcal{S})$ and $\mcal{B} = \phi^{-1}(\mcal{T})$. Recall the definitions of $\mcal{X}_{n,r}$ and $\mcal{Y}_{n,r}$ (Definition~\ref{def:Vnr-cross-intersecting}).  We have $(\mfk{H}_{n, k}^r, \mfk{M}_{n, k}^r) = (\phi^{-1}(\mcal{X}_{n,r}),\ \phi^{-1}(\mcal{Y}_{n,r}))$.
 
We now establish some notation. For all $1 \leq \ell\leq r$, set $x_{\ell}:=|\mcal{S}(\ell)| + |\mcal{T}(\ell)|$, 
$$
y_{\ell}:=|\mcal{X}_{n,r}(\ell)|+|\mcal{Y}_{n,r}(\ell)|
= \begin{cases} 
\binom{n}{\ell}-\binom{n-r}{\ell} & \text{if $l<r$} \\
1+\binom{n}{r}-\binom{n-r}{r} & \text{if $l=r$},
\end{cases}
 $$
(the final equality can be found in Lemma~\ref{lem:cintsetrefinement}) and $c_{\ell}:=C^{n,k}_{r, \ell}$. Now, Equation~\eqref{eq:card-inverse-proj} implies

\begin{subequations}\label{eq:sizes}
\begin{align}
|\mcal{A}| + |\mcal{B}| = \sum_{\ell = 1}^r c_{\ell}x_{\ell}; \label{eq:ab}\\
|\mfk{H}_{n, k}^r| + |\mfk{M}_{n, k}^r| = \sum_{\ell = 1}^r c_{\ell}y_{\ell}. \label{eq:hm}
\end{align}
\end{subequations}
 
We will now use Lemma~\ref{lem:summing} along with Lemma~\ref{lem:cintsetrefinement} to prove our result. We proceed via a case analysis.
 
\paragraph{{\bf Case (a):} } We first assume $r\leq n/2$. Lemma~\ref{lem:cintsetrefinement} shows that $x_{\ell} \leq y_{\ell}$, for all $\ell \in [r]$.
Applying Lemma~\ref{lem:summing}, by setting $M_1=[r]$ and $M_2=M_3=\emptyset$, it follows that
\[\sum_{\ell=1}^r c_{\ell}x_{\ell} \le \sum_{\ell=1}^r c_{\ell}y_{\ell}.\] The bound now follows from the two parts of Equation~\eqref{eq:sizes}.
Lemma~\ref{lem:summing} states that the equality holds if and only if for every $1 \le \ell \le r$, we have $|\mcal{S}(\ell)| + |\mcal{T}(\ell)|=x_{\ell}= y_{\ell}$. Now the characterizaion follows from Lemma~\ref{lem:cintsetrefinement} and the fact that $(\mcal{A},\mcal{B})=(\phi^{-1}(\mcal{S}),\phi^{-1}(\mcal{T}))$.

\paragraph{{\bf Case (b):}} We now assume that $n>r>n/2$. 
Adopting the convention that $[0]$ is the empty set, we set 
$$
M_{1}= \begin{cases}
[n-r-1] \cup \{n/2\} & \text{if $n \equiv 0 \pmod 2$}, \\
[n-r-1] & \text{otherwise,}
\end{cases}
$$
$M_{2}:= \left[ n-r, \left\lfloor \dfrac{n-1}{2} \right\rfloor \right]$ and $M_{3}= \left [ \left \lceil \dfrac{n+1}{2} \right \rceil, r \right]$. 
We define the map $\psi:M_{2} \to M_{3}$ with $\psi(\ell) = n-\ell$, for all $\ell \in M_{2}$. Lemmas~\ref{lem:l-n-l}, \ref{lem:cintsetrefinement}, \ref{lem:ineq}, and~\ref{lem:pairwise-opt} (feeding the results of Lemmas~\ref{lem:l-n-l} and~\ref{lem:cintsetrefinement} into the last of these) show that the hypotheses of Lemma~\ref{lem:summing} are satisfied. Therefore, we have \[\sum_{\ell=1}^r c_{\ell}x_{\ell} \le \sum_{\ell=1}^r c_{\ell}y_{\ell},\] with equality if and only if $x_{\ell}= y_{\ell}$, for all $\ell \in [r]$. The result, in this case, now follows from Lemma~\ref{lem:cintsetrefinement} and the fact that $(\mcal{A},\mcal{B})=(\phi^{-1}(\mcal{S}),\phi^{-1}(\mcal{T}))$.  

\paragraph{{\bf Case (c):}} We now consider the case $r=n$. We set 
$$
M_{1}= \begin{cases}
 \{n,n/2\} & \text{if $n \equiv 0 \pmod 2$} \\
\{n\} & \text{otherwise,}
 \end{cases}$$
 $M_{2}=\left[ \left \lfloor \dfrac{n-1}{2} \right \rfloor \right]$, and $M_{3}=\left[ \left \lceil \dfrac{n+1}{2} \right \rceil , n-1 \right]$. Since there is exactly one $n$-set in $\binom{[n]}{\leq n}$, it follows that $x_{n} \leq y_{n}$.  Arguing as in the previous case, using Lemmas~\ref{lem:l-n-l}, \ref{lem:cintsetrefinement}, \ref{lem:ineq}, and \ref{lem:pairwise-opt}, we check that the hypotheses of Lemma~\ref{lem:summing} are satisfied. The result, in this case, now follows from Lemma~\ref{lem:cintsetrefinement} and the fact that $(\mcal{A},\mcal{B})=(\phi^{-1}(\mcal{S}),\phi^{-1}(\mcal{T}))$.  This concludes the proof.
\end{proof}

\section{Intersecting families in $\mcal{I}_{n,k}^{r}$}\label{sect:mainproof}

In this section, we use Theorem~\ref{thm:aux} as the main tool to prove Theorem~\ref{thm:main}.
When we use Theorem~\ref{thm:aux}, we compare the size of a cross-intersecting pair of families in $\mcal{I}_{n-1, k}^{r-1}$ with the family $\mcal{H}_{n, k}^r$.  For this, note that 
\[
|\mfk{H}_{n-1, k}^{r-1}| + |\mfk{M}_{n-1, k}^{r-1}| = 1 + |\{X \in \mcal{I}_{n-1, k}^{r-1}:\ X \cap ([r-1] \times \{1\}) \neq \emptyset\}|
\]
while by decomposing $\mcal{H}_{n, k}^r$ into the sets that contain $(1, 1)$ and the one that does not, we see
\begin{align}
|\mcal{H}_{n, k}^r| &= 1 + |\{Y \in \mcal{I}_{n, k}^r:\ (1, 1) \in Y,\ Y \cap ([2, r+1] \times \{1\}) \neq \emptyset\}| \notag\\
	&= 1 + |\{X \in \mcal{I}_{n-1, k}^{r-1}:\ X \cap ([r] \times \{1\}) \neq \emptyset\}| \notag\\
	&> |\mfk{H}_{n-1, k}^{r-1}| + |\mfk{M}_{n-1, k}^{r-1} |, \label{eq:compare-HvsM}
\end{align}
as long as $n>r$ (recall that $\mcal{H}_{n, k}^r$ is defined only when $n>r$).
\begin{proof}[Proof of Theorem~\ref{thm:main}]
Let $\mcal{B}$ be an intersecting family in $\mcal{I}_{n, k}^r$ with $\cap \mcal{B}= \emptyset$. By Lemma~\ref{lem:pipreservescint}, we can obtain a stable intersecting family $\mcal{C}$ with $|\mcal{B}| = |\mcal{C}|$, via a sequence of compression operations.  We  need to consider two cases:
(a) $\cap \mcal{C} \neq \emptyset$ (this of course will not arise if $\mcal B$ was already stable); or
(b) $\cap \mcal{C} = \emptyset$. 

\paragraph{{\bf Case (a):} } Suppose that $\cap \mcal{C} \neq \emptyset$. In this case, looking at the sequence of compression operations for the last time the family had empty intersection (and relabelling points if necessary), we let $\mcal{F}$ be an intersecting family with $\cap \mcal{F}=\emptyset$, $|\mcal{F}| = |\mcal{B}|$ be such that $\pi_{1,2}(\mcal{F})=\mcal{C}$, so $\cap \mcal{C}=\{(1,1)\}$. This happens if and only if the following are true:
\begin{enumerate}[(i)]
\item $\mcal{F}=\mcal{F}_{1} \cup \mcal{F}_{2}$, where
$\mcal{F}_{i}=\{X\in \mcal{F} :\ (1,i) \in X\}$, for $i \in \{1,2\}$; and
\item $P_{1,2}(X) \notin \mcal{F}_{1}$, for all $X \in \mcal{F}_{2}$.
\end{enumerate} 

For each $i\in \{1,2\}$, define $\tilde{\mcal{F}}_{i}=\{X\setminus \{(1,i)\} \ :\ X \in F_{i} \}$. Since $\cap \mcal{F} = \emptyset$, $\tilde{\mcal{F}}_1, \tilde{\mcal{F}}_2 \neq \emptyset$.  By the second condition above, we note that $\tilde{\mcal{F}}_{1} \cap \tilde{\mcal{F}}_{2}=\emptyset$. Since $\mcal{F}$ is intersecting, $(\tilde{\mcal{F}}_{1},\ \tilde{\mcal{F}}_{2})$ is a non-empty cross-intersecting pair in $\mcal{I}^{r-1}_{n-1,k}$.Thus,
\begin{align*}
|\mcal{B}|&=|\mcal{F}| = |\mcal{F}_1| + |\mcal{F}_2|\\
	&=|\tilde{\mcal{F}}_{1}|+|\tilde{\mcal{F}}_{2}|\\
	&\leq |\mfk{H}^{r-1}_{n-1,k}|+|\mfk{M}^{r-1}_{n-1,k}|. &&\text{(by Theorem~\ref{thm:aux})}
\end{align*}
If  $r < n$ (recall that $\mcal H_{n,k}^r$ is only defined for $r<n$), then by Equation~\eqref{eq:compare-HvsM}, 
\[
|\mfk{H}^{r-1}_{n-1,k}|+|\mfk{M}^{r-1}_{n-1,k}| <|\mcal{H}^{r}_{n,k}|,
\]
completing the proof in this case.

If $r = n$, then
\begin{align*}
|\mcal{F}_1| + |\mcal{F}_2| & = |\tilde{\mcal{F}}_1| + |\tilde{\mcal{F}}_2|\\
 & \leq |\mfk{H}^{n-1}_{n-1,k}|+|\mfk{M}^{n-1}_{n-1,k}|\\
	&=|\{[n-1] \times \{1\}\}| + |\{X \in \mcal{I}_{n-1, k}^{n-1} :\ X \cap [n-1] \times \{1\} \neq \emptyset\}|\\
	&= 1 + k^{n-1} - (k-1)^{n-1} \\
	&\leq k^{n-1} - (k-1)^{n-1} + (k-1),
\end{align*}
with equality in the final equation only if $k=2$. In the case $k=2$ and equality holding throughout, Theorem~\ref{thm:aux} implies that
$\mathcal{F}_1\cong \{[n] \times \{1\}\} $  and $\mathcal{F}_2=\{X \in \mcal{I}_{n, k}^n:\ (1, 2) \in X \text{ and } X \cap F \neq \emptyset\}$, where $F$ is the solitary element of $\mathcal{F}_{1}$.  

\paragraph{{\bf Case (b):} } Suppose that $\cap \mcal{C}=\emptyset$. Since $\mcal{C}$ is stable, by Lemma~\ref{lem:stableint}, $\phi(\mcal{C})$ is an intersecting family of sets, so we can consider an $r$-maximal intersecting set $\mcal{S}$ in $\binom{[n]}{\leq r}$ with $\phi(\mcal{C})\subseteq \mcal{S}$.

As in the proof of Theorem~\ref{thm:aux}, we apply Lemma~\ref{lem:summing}. This time, we set $x_{\ell}=|\mcal{S}_{\ell}|$,  
$$
y_{\ell}=|\mcal{V}_{n, r}(\ell)|
= \begin{cases} 
\binom{n-1}{\ell-1}- \binom{n-r-1}{\ell -1} & \text{if $\ell<r$}\\
\binom{n-1}{r-1}- \binom{n-r-1}{r -1}+1 & \text{if $\ell=r$,}
\end{cases}
$$ 
and $c_{\ell}=C_{r,\ell}^{n,k}$. Note that since $\cap \mcal{C} = \emptyset$, then $\cap \mcal{S} = \emptyset$ also.
By Lemma~\ref{lem:invmaxstab}, we may assume that $\mcal{C}= \phi^{-1}(\mcal{S})$, since $\mcal{C}$ is maximal and
$\phi^{-1}(\mcal{S})$ will have empty intersection since $\cap \mcal{S} = \emptyset$. 

Thus, by Equation~\eqref{eq:card-inverse-proj},
\begin{subequations}\label{eq:size}
\begin{align}
|\mcal{C}| &= \sum_{\ell = 0}^{r} c_{\ell}x_{\ell},\\
|\mcal{H}_{n,k}^{r}| &= \sum_{\ell = 0}^{r} c_{\ell}y_{\ell}.
\end{align}
\end{subequations}

\paragraph{{\bf Subcase 1:}} Consider first the case $r \leq n/2$. Lemma~\ref{lem:ApplyHM}
shows that $y_{\ell} \geq x_{\ell}$, for all $\ell \in [r]$. Applying Lemma~\ref{lem:summing}, by setting $M_{1}=[r]$, $M_{2}=M_{3}=\emptyset$, we conclude that
$|\mcal{H}_{n,k}^{r}|= \sum c_{\ell} y_{\ell} \geq \sum c_{\ell} x_{\ell}=|\mcal{C}|$.
If equality holds, Lemma~\ref{lem:summing} implies that $|\mcal{S}(\ell)|=x_{\ell}=y_{\ell}=|\mcal{V}_{n,r}(\ell)|$, for all $\ell\in [r]$. Now, under the constraint $r\geq 4$, using Lemma~\ref{lem:ApplyHM}, we see that $|\mcal{C}|=|\mcal{H}_{n,k}^{r}|$ implies that $\mcal{S} \cong \mcal{V}_{n,r}$. Using $\phi^{-1}(\mcal{S})=\mcal{C}$, we conclude that $|C|=|\mcal{H}_{n,k}^{r}|$ if and only if $C\cong\mcal{H}_{n,k}^{r}$.

\paragraph{{\bf Subcase 2:} } Now, we consider the case $n/2 < r < n$. Adopting the convention that $[0]$ is the empty set, we set \[
M_{1}= \begin{cases}
[n-r-1] \cup \{n/2\} & \text{if $n \equiv 0 \pmod 2$} \\
[n-r-1] & \text{otherwise,}
 \end{cases}
 \] 
 $M_{2}:= \left[ n-r,  \left \lfloor \dfrac{n-1}{2} \right \rfloor \right]$ and $M_{3}= \left[ \left\lceil \dfrac{n+1}{2} \right\rceil, r \right]$. We define the map $\psi:M_{2} \to M_{3}$ by $\psi(\ell) = n-\ell$, for all $\ell \in M_{2}$. Lemmas~\ref{lem:compare-maxInterSets}, \ref{lem:ineq}, \ref{lem:pairwise-opt}, and \ref{lem:ApplyHM} 
 show that the hypotheses of Lemma~\ref{lem:summing} are satisfied. Applying Lemma~\ref{lem:summing}, we conclude that $|\mcal{H}_{n,k}^{r}|= \sum c_{\ell} y_{\ell} \geq \sum c_{\ell} x_{\ell}=|\mcal{C}|$, with equality if and only if $|\mcal{H}_{n,k}^{r}|= \sum c_{\ell} y_{\ell} = \sum c_{\ell} x_{\ell}=|\mcal{C}|$, for all $\ell \in [r]$. Using Lemma~\ref{lem:ApplyHM} as in the previous subcase, we can conclude that the result is true in this subcase as well.

\paragraph{{\bf Subcase 3:}} Now consider the case $r = n$.  Since $\mathcal{S} \subseteq \binom{[n]}{\leq n}$ is $n$-maximal, then $[n] \in \mathcal{S}$.  Set $\mathcal{S}' = \mathcal{S} \setminus \{[n]\}$. We observe that $\mathcal{S}' \subseteq \binom{[n]}{\leq n-1}$ is an $(n-1)$-maximal intersecting set with $\cap \mathcal{S}' = \emptyset$ also. Set $x_{\ell}=|\mathcal{S}(\ell)|$ and 
$$
y_{\ell}= \begin{cases} 
|\mcal{V}_{n,n-1}(\ell)| & \text{for $\ell<n$}\\ 
1 & \text{otherwise.}
\end{cases}
$$ 
We also define 
$$
M_{1}= \begin{cases}
 \{n,n/2\} & \text{if $n \equiv 0 \pmod 2$} \\
\{n\} & \text{otherwise,}
 \end{cases}
 $$ 
 $M_{2}=\left[ \left \lfloor \dfrac{n-1}{2} \right \rfloor \right]$, and $M_{3}=\left[ \left \lceil \dfrac{n+1}{2} \right \rceil, n-1 \right]$. Since there is exactly one $n$-set in $\binom{[n]}{\leq n}$, it follows that $x_{n}=1= y_{n}$. We define the map $\phi:M_{2} \to M_{3}$ by $\phi(\ell) = n-\ell$, for all $\ell \in M_{2}$. Lemmas~\ref{lem:compare-maxInterSets}, \ref{lem:ineq}, \ref{lem:pairwise-opt} and \ref{lem:ApplyHM}
 show that the premise of Lemma~\ref{lem:summing} is satisfied. Applying Lemma~\ref{lem:summing} and \eqref{eq:card-inverse-proj}, we conclude that 
\[
 |\phi^{-1}(\{[n]\})|+|\phi^{-1}(\mcal{V}_{n,n-1})|= \sum c_{\ell} y_{\ell} \notag
  \geq \sum c_{\ell} x_{\ell}=|\phi^{-1}(\{[n]\})|+|\phi^{-1}(\mcal{S'})|=|\mcal{C}|
 \]
 with equality implying that $|\mcal{S}'(\ell)|=x_{\ell}=y_{\ell}=|\mcal{V}_{n,n-1}(\ell)|$, for all $\ell \in [n-1]$. Provided $n-1\geq 4$, by Lemma~\ref{lem:ApplyHM}, in the extremal case, we have $S' \cong \mcal{V}_{n,n-1}$, and therefore 
 \[\mcal{C}= \phi^{-1}([n]) \cup \phi^{-1}(\mcal{S}') \cong \phi^{-1}([n]) \cup \phi^{-1}(\mcal{V}_{n,n-1}).\] This concludes the proof. 
\end{proof}

\section{Application}\label{sect:app}

In~\cite{claws}, Feghali, Johnson and Thomas consider the largest of intersecting families of $r$-independent sets in depth-two claws. The EKR Theorem was established for these sets, provided that $r$ is relatively small. Using our Hilton-Milner Theorem for unions of complete graphs, we can extend this to determine all values of $r$ for which the depth-two claw is $r$-EKR.

\begin{definition}
A \textsl{depth-two claw} is a tree that has one vertex called the root and every other vertex has degree 1 or 2, with every leaf  at distance 2 from the root. We denote a depth-two claw with $n$ leaves by $G_n$. 
\end{definition}

Note that the depth-two claw $G_n$ can also be thought of as a special kind of spider graph, where the spider has $n$ legs all of length $2$, or as a star with $n$ leaves in which every edge has been subdivided.

If the root is removed from the depth-two claw $G_n$, the remaining graph will be isomorphic to  $\Gamma_{n,2} = \cup_{i=1}^n K_2$. Define $\mcal I_n^r$ to be the set of all $r$-independent sets in $G_n$. A family $\mathcal{A} \subseteq \mcal{I}_n^{r}$ is called \textsl{canonically intersecting} if every set $A \in \mathcal{A}$ contains a vertex $x$ that is a leaf in $G_n$. A canonically intersecting set has size
\[
\binom{n-1}{r-1} 2^{r-1} + \binom{n-1}{r-2}.
\]
The first summand counts the sets that contain $x$, but not the root, and the second summand counts the sets that contain both $x$ and the root.

We begin with  a technical result.
\begin{lem}\label{lem:tech}
Let $4 \le r \le n-1$. Let $m \in \{r,r-1,r-2\}$. If $m<n-1$ then 
\[
 \sum_{j=0}^{r-1} \binom{m}{j} \binom{n-m-1}{r-j-1} 2^{r-j-1}    >  1 + \binom{n-1}{r-1} .
\]
If $m=n-1$ then
\[
 \sum_{j=0}^{r-1} \binom{m}{j} \binom{n-m-1}{r-j-1} 2^{r-j-1}  = \binom{n-1}{r-1} .
\]
\end{lem}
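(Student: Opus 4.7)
The plan is to apply the Vandermonde identity
\[ \sum_{j=0}^{r-1}\binom{m}{j}\binom{n-m-1}{r-j-1} = \binom{n-1}{r-1} \]
to peel off the term $\binom{n-1}{r-1}$ from the sum in the lemma. Subtracting then leaves the \emph{remainder}
\[ R_m := \sum_{j=0}^{r-1}\binom{m}{j}\binom{n-m-1}{r-j-1}\bigl(2^{r-j-1}-1\bigr), \]
a sum of nonnegative terms. The task reduces to showing $R_{n-1}=0$ and $R_m > 1$ whenever $m \in \{r-2,r-1,r\}$ with $m<n-1$.

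The case $m = n-1$ is immediate: $\binom{n-m-1}{r-j-1}=\binom{0}{r-j-1}$ forces $j=r-1$, at which point the factor $2^{r-j-1}-1=0$ kills the only surviving summand, so $R_{n-1}=0$ and the claimed equality follows.

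For $m < n-1$ with $r \geq 4$, I would isolate the single $j = r-2$ term in $R_m$, which simplifies to $\binom{m}{r-2}(n-m-1)$. A short case analysis handles each value of $m$: when $m = r-2$, this equals $n - r + 1 \geq 2$; when $m = r-1$, it equals $(r-1)(n-r) \geq 3$; and when $m = r$, it equals $\binom{r}{2}(n-r-1) \geq 6$. The last case relies on the extra input that $m < n-1$ forces $n \geq r+2$, while the first two only need $n \geq r+1$ together with $r \geq 4$. Since all other terms in $R_m$ are nonnegative, this single term already yields $R_m \geq 2 > 1$.

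There is no significant obstacle: once the Vandermonde rewrite is in place, the inequality collapses to an elementary numerical check. The only care needed is to match the hypotheses $m<n-1$ and $r\geq 4$ to the appropriate lower bounds on $\binom{m}{r-2}(n-m-1)$, so that the $j=r-2$ term of $R_m$ is guaranteed to exceed $1$ in each of the three subcases.
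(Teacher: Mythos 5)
Your proof is correct. Both you and the paper use the Vandermonde identity $\sum_{j}\binom{m}{j}\binom{n-m-1}{r-1-j}=\binom{n-1}{r-1}$ to peel off the main term and then win by isolating a single summand where the power of $2$ provides slack, but the decompositions differ in a substantive way. The paper cases on $m$ relative to $n$: it treats $m=n-1$ and $m=n-2$ by direct computation and, for $m\le n-3$, isolates the $j=r-3$ term $\binom{m}{r-3}\binom{n-m-1}{2}2^2$; the separate $m=n-2$ case is forced on them precisely because $\binom{n-m-1}{2}$ vanishes there. You instead isolate the $j=r-2$ term of the remainder, namely $\binom{m}{r-2}(n-m-1)$, whose second factor is positive for every $m<n-1$, so a single uniform argument covers all of $m\le n-2$ at the cost of a three-way check on $m\in\{r,r-1,r-2\}$ (and your accounting of exactly which hypotheses each subcase needs --- $n\ge r+1$ versus the $n\ge r+2$ forced by $m<n-1$ when $m=r$ --- is accurate). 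Your version is arguably cleaner, since it eliminates the paper's special boundary case and makes explicit, via the nonnegative remainder $R_m$, that dropping all other terms is harmless; the paper's version has the minor advantage of not needing to distinguish the three values of $m$.
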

\begin{proof}
The statement for $m=n-1$ follows since the only non-zero term in the sum occurs when $j = r-1$. 

If $m=n-2$ then there are two non-zero terms, and the sum becomes $$\binom{n-2}{r-2}\binom{1}{1}2^1+\binom{n-2}{r-1}\binom{1}{0}2^0=2\binom{n-2}{r-2}+\binom{n-2}{r-1}=\binom{n-1}{r-1}+\binom{n-2}{r-2}.$$ Since $n-2 >r-2>0$ we have $\binom{n-2}{r-2} \ge n-2 \ge 3$, completing the proof in this case.

Now we assume $m \le n-3$. Clearly, for any $j$, $\binom{m}{j} \binom{n-m-1}{r-j-1} 2^{r-j-1} \geq \binom{m}{j} \binom{n-m-1}{r-j-1}$, but we need a slightly stronger result, so we consider the term of the summation that arises when $j=r-3$ separately. This is $$\binom{m}{r-3}\binom{n-m-1}{2}2^{2}.$$ Observe that since $m \le n-3$, we have $n-m-1\ge 2$, so the second binomial coefficient in this product is at least $1$, and also since $m>r-3>0$, the first binomial coefficient is at least $m$. This easily implies that $$\binom{m}{r-3}\binom{n-m-1}{2}2^{2}>\binom{m}{r-3}\binom{n-m-1}{2}+1.$$
Using this, we can see that 

\begin{align*}
 \sum_{j=0}^{r-1} \binom{m}{j} \binom{n-m-1}{r-j-1} 2^{r-j-1}&>
1 +  \sum_{j=0}^{r-1} \binom{m}{j} \binom{n-m-1}{r-j-1} \\
&= 1 + \binom{n-1}{r-1}.
 \end{align*}
The last equality follows from the Vandermonde binomial identity.
\end{proof}

We now show that depth-two claws have the $r$-EKR property unless $r=n$, and as long as $r \neq n-1$ they in fact have the strict $r$-EKR property. This means that the largest intersecting family from $\mcal I_n^{r}$ is the canonical intersecting system except when $r=n$, and furthermore that canonical intersecting systems are the only intersecting families of the maximum cardinality except when $r=n-1$.

\begin{thm}
Let $G_n$ be a depth-two claw with $n$ leaves. Let $1 \leq r \leq n-1$, then $G_n$ is $r$-EKR.
Moreover, if $4 \leq r < n-1$, then $G_n$ is strict $r$-EKR.
\end{thm}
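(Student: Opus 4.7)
The plan is to case-split on whether $\cap\mathcal{F}$ is empty, and, if so, on the relationship of $\mathcal{F}$ to the root. If $\cap\mathcal{F}$ is non-empty, direct counting of $r$-independent sets through a common vertex $x$ shows that the canonical bound is achieved only when $x$ is a leaf: a common middle vertex gives $\binom{n-1}{r-1}2^{r-1}$ sets and a common root gives $\binom{n}{r-1}$, both strictly smaller. So I would now assume $\cap\mathcal{F}=\emptyset$, and write $\mathcal{F}=\mathcal{F}_0\sqcup\mathcal{F}_1$, where $\mathcal{F}_1$ is the subcollection of sets containing the root $v$. Since any common vertex of $\mathcal{F}_1$ must be $v$, the assumption $\cap\mathcal{F}=\emptyset$ forces $\mathcal{F}_0\neq\emptyset$.

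If $\mathcal{F}_1=\emptyset$, then $\mathcal{F}=\mathcal{F}_0$ is an intersecting family in $\mcal{I}_{n,2}^r$ with empty intersection, so Theorem~\ref{thm:main} gives $|\mathcal{F}|\leq|\mathcal{H}_{n,2}^r|=\binom{n-1}{r-1}2^{r-1}-X_r+1$, where $X_r$ denotes the $m=r$ sum from Lemma~\ref{lem:tech}. A brief algebraic manipulation (using $\binom{n}{r-1}=\binom{n-1}{r-1}+\binom{n-1}{r-2}$) reduces the target inequality to $X_r\geq 1+\binom{n-1}{r-1}$, which is exactly Lemma~\ref{lem:tech}'s conclusion at $m=r$ (strictly when $r<n-1$). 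If instead $\mathcal{F}_1\neq\emptyset$, fix any $F_1^*=\{v\}\cup L^*\in\mathcal{F}_1$; each $F_0\in\mathcal{F}_0$ must then meet $L^*$ in a leaf. I would case-split on $\cap\mathcal{F}_0$. If $\cap\mathcal{F}_0$ contains a leaf $\ell$, the assumption $\cap\mathcal{F}=\emptyset$ produces some $F_1'=\{v\}\cup L'\in\mathcal{F}_1$ with $\ell\notin L'$; then each $F_0$ contains $\ell$ and meets $L'$, so $|\mathcal{F}_0|\leq\binom{n-1}{r-1}2^{r-1}-X_{r-1}$ by a direct count, and combining with the trivial $|\mathcal{F}_1|\leq\binom{n}{r-1}$ and Lemma~\ref{lem:tech} at $m=r-1$ beats the canonical bound strictly. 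The sub-case where $\cap\mathcal{F}_0$ consists only of a middle vertex is analogous, using Lemma~\ref{lem:tech} at $m=r-1$ or $m=r-2$ depending on whether the paired leaf is in $L^*$; and the sub-case $\cap\mathcal{F}_0=\emptyset$ applies Theorem~\ref{thm:main} to $\mathcal{F}_0$, combines with the trivial bound on $|\mathcal{F}_1|$, and uses Lemma~\ref{lem:tech} at $m=r$.

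The main obstacle is the last sub-case ($\mathcal{F}_1\neq\emptyset$, $\cap\mathcal{F}_0=\emptyset$) when $r=n-1$: Lemma~\ref{lem:tech} at $m=r$ then gives only equality, so the decoupled bound $|\mathcal{H}_{n,2}^{n-1}|+\binom{n}{n-2}$ exceeds the canonical count by exactly one. To close this gap I would exploit the observation that every $F_0\in\mathcal{F}_0$ must contain at least one leaf in order to cross-intersect with a non-empty $\mathcal{F}_1$, so $\mathcal{F}_0$ cannot contain any $r$-independent set supported entirely on middle vertices; this refined constraint, combined with the cross-intersecting pair $(\mathcal{F}_0,\tilde{\mathcal{F}}_1)$ where $\tilde{\mathcal{F}}_1=\{F\setminus\{v\}:F\in\mathcal{F}_1\}$ is viewed as a family of $(r-1)$-independent sets of leaves, can be treated via Theorem~\ref{thm:aux} to yield the missing factor. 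For strict $r$-EKR under $4\leq r<n-1$, every invocation of Lemma~\ref{lem:tech} and Theorem~\ref{thm:main} is strict, so the canonical family is the unique extremum.
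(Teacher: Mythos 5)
Your overall architecture matches the paper's: split $\mathcal{F}$ by membership of the root, bound the root-containing part trivially by $\binom{n}{r-1}$, and handle the root-free part either by a common-element count (your Lemma~\ref{lem:tech} invocations at $m=r-1$ and $m=r-2$ are exactly the paper's Case (a)) or by Theorem~\ref{thm:main} plus Lemma~\ref{lem:tech} at $m=r$ (the paper's Case (b)). You also correctly isolate the one genuinely delicate point, namely $r=n-1$, where the decoupled bound exceeds the canonical count by exactly $1$. Two smaller omissions: you never say how $r\in\{1,2,3\}$ is handled (the paper imports the $2r-1\le n$ result of Feghali--Johnson--Thomas plus one exhaustive check, since Theorem~\ref{thm:main} and Lemma~\ref{lem:tech} need $r\ge 3$ and $r\ge 4$ respectively), and the reduction you describe for the sub-case $\mathcal{F}_1=\emptyset$ is the computation for the general sub-case; when $\mathcal{F}_1=\emptyset$ the bound is immediate.

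The genuine gap is your mechanism for closing the $r=n-1$ gap. You propose to shave the excess $1$ off $|\mathcal{F}_0|$ using (i) the observation that every $F_0$ must contain a leaf, and (ii) Theorem~\ref{thm:aux} applied to the pair $(\mathcal{F}_0,\tilde{\mathcal{F}}_1)$. Neither works as stated. For (i): the extremal family for Theorem~\ref{thm:main}, once stabilized, is anchored at leaves (it is one of the families $\mathcal{H}_i$ of the paper, consisting of $R_i=\{a_1,\dots,a_n\}\setminus\{a_i\}$ together with all root-free $(n-1)$-sets containing $a_i$ and meeting $R_i$), and \emph{every} set in it already contains a leaf; so the constraint is vacuous and cannot force $|\mathcal{F}_0|<|\mathcal{H}_{n,2}^{n-1}|$. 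For (ii): Theorem~\ref{thm:aux} requires both families to consist of $r$-independent sets of the \emph{same} uniformity in $\Gamma_{n,k}$, whereas $\mathcal{F}_0$ has uniformity $n-1$ and $\tilde{\mathcal{F}}_1$ has uniformity $n-2$ and is supported only on leaves, so the theorem does not apply; and even a mixed-uniformity cross-intersecting bound would be maximized by making one family a singleton, which is not the regime here. The correct resolution --- and the one the paper uses --- is that the loss occurs on the $\mathcal{F}_1$ side: if $|\mathcal{F}_0|=|\mathcal{H}_{n,2}^{n-1}|$ then (after stabilization) $\mathcal{F}_0=\mathcal{H}_i$ for some $i$, which contains a set such as $\{a_1,a_2\}\cup\{b_3,\dots,b_{n-1}\}$ using middle vertices; this set is disjoint from the root-containing independent set $\{c,a_3,\dots,a_n\}$, which therefore cannot lie in $\mathcal{F}_1$, giving $|\mathcal{F}_1|<\binom{n}{n-2}$ and restoring the strict inequality. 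Your sketch points the deduction in the wrong direction and cites a tool that does not cover the situation, so as written the $r=n-1$ case is not proved.
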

\begin{proof}
Let $G_n$ be a depth-two claw with $n$ leaves. It was shown in~\cite{claws} that $G_n$ is $r$-EKR, provided that $2r-1 \le n$; this immediately implies the first statement when $r \in \{1,2\}$, and also when $r=3$ except in the case $n=4$. An exhaustive search confirms the statement in this final small case. Henceforth in this proof we assume $r \ge 4$.

Denote the root vertex by $c$, the vertices adjacent to $c$ are labelled by $b_1,b_2, \dots, b_n$ and the leaves are labelled by $a_1, a_2, \dots, a_n$ where the leaf adjacent to $b_i$ is denoted by $a_i$.

Let $\mathcal{A} \subseteq \mcal{I}^{r}_n$ be intersecting. This family can be partitioned into two families
\[
\mathcal{B}=\{ A \in \mathcal{A} \, : \, c \not \in A\},
\qquad
\mathcal{C} = \{  A \in \mathcal{A} \, : \, c \in A \}.
\]
Note that for any $C \in \mathcal{C}$, since $c \in C$ for all $i \in [n]$, $b_i \not \in C$, this implies $|\mathcal{C}| \leq \binom{n}{r-1}$.

We will show that $| \mathcal{A} | \leq \binom{n-1}{r-1} 2^{r-1} + \binom{n-1}{r-2} $ and that this bound is met only if $\mathcal{A}$ is canonically intersecting.
We can assume that $\mathcal{A}$ is maximal, so every $r$-independent set in $G_n$ that intersects every set in $\mathcal{A}$ belongs to $\mathcal{A}$.

We will consider 2 cases, depending on whether $\cap\mcal B=\emptyset$. 

\paragraph{{\bf Case (a):} } Assume $\cap\mcal B \neq \emptyset$; say $x \in B$ for every $B \in \mathcal{B}$, where $x$ is a vertex of $G_n$.
In this case, if $x \in C$ for all $C \in \mathcal{C}$, then, since $\mathcal{A}$ is maximal, $\mathcal{A}$ is canonical. If there is a $C \in \mathcal{C}$ with $x \not \in C$, then, without loss of generality $C=\{c, a_1, a_2, \dots, a_{r-1}\}$. This means that each $B \in \mathcal{B}$ must intersect with $C$, so must contain at least one of the vertices $a_1,a_2, \dots, a_{r-1}$.

Let $\mathcal{T}$ be the set of all the $r$-independent sets that contain $x$ but not $c$. We have 
\[
| \mathcal{T} | = \binom{n-1}{r-1} 2^{r-1},
\]
and $\mathcal{B} \subset \mathcal{T}$. 

If $x$ is not $b_i$, for some $i \in [r-1]$ (so $x$ is not adjacent to any of the $a_1,a_2, \dots, a_{r-1}$ in $C$), then 
\[
\sum_{j=0}^{r-1} \binom{r-1}{j} \binom{n-r}{r-j-1} 2^{r-j-1} > \binom{n-1}{r-1}
\] 
elements in $\mathcal{T}$ do not intersect $C$. The last inequality follows from Lemma~\ref{lem:tech} with $m=r-1$; since $r<n$, we have $m < n-1$. 
 
Similarly, if $x$ is one of the $b_i$ with $i \in [r-1]$, then 
\[
\sum_{j=0}^{r-2} \binom{r-2}{j} \binom{n-r+1}{r-j-1} 2^{r-j-1} > \binom{n-1}{r-1}
\] 
elements in $\mathcal{T}$ do not intersect $C$ (again, the last inequality follows from Lemma~\ref{lem:tech} with $m=r-2$, since $r < n$, we have $m < n-1$).
We have in both cases that
\[
|\mathcal{B}| < \binom{n-1}{r-1} 2^{r-1} -\binom{n-1}{r-1}.
\]

Thus we have that
\begin{align*}
| \mathcal{A} | &= | \mathcal{B} | +| \mathcal{C} |  \\
& <  \binom{n-1}{r-1} 2^{r-1} - \binom{n-1}{r-1}  +  \binom{n}{r-1} \\
& = \binom{n-1}{r-1} 2^{r-1} + \binom{n-1}{r-2}.
\end{align*}
So this family is smaller than the canonical system.

\paragraph{{\bf Case (b):} } Assume $\cap\mcal B = \emptyset$, so we can apply our Hilton-Milner Theorem as $\mathcal{B}$ is equivalent to a non-canonically intersecting family of $r$-independent sets in $\Gamma_{n,2}$, which lies in $\mathcal I_{n,2}^r$. Thus, using Theorem~\ref{thm:main} to bound the size of $\mathcal{B}$, along with Lemma~\ref{lem:tech} with $m=r$, adding the additional hypothesis that $r<n-1$, and using our original formula for $|\mathcal H_{n,k}^r|$ rather than Equation~\eqref{eq:card-HM-family}, we have 
\begin{align*}
| \mathcal{A} | &= | \mathcal{B} | +| \mathcal{C} |  \\
& \leq 
\binom{n-1}{r-1} 2^{r-1} + 1 - \sum_{j=0}^{r-1} \binom{r}{j} \binom{n-r-1}{r-j-1} 2^{r-j-1} +  \binom{n}{r-1} \\
&< \binom{n-1}{r-1} 2^{r-1} - \binom{n-1}{r-1}  +  \binom{n}{r-1} \\
&=\binom{n-1}{r-1} 2^{r-1} + \binom{n-1}{r-2} ,
\end{align*}
which is the size of the canonical intersecting system. 

Finally, we need to consider the possibility within this case that $r=n-1$. We identify the induced subgraph $G_{n} \setminus \{c\}$ with $\Gamma_{n,2}$, via the isomorphism $\psi:G_{n} \setminus \{c\} \to \Gamma_{n,2}$, given by $\psi(a_{i})=(i,1)$ and $\psi(b_{i})= (i,2)$, for all $i\in [n]$. Define $\tilde{\mcal{C}}=\{C \setminus \{c\} \:\ C \in \mcal{C}\}$. We can transfer the projection and compression operations to the universe of subsets of the vertex set of $G_{n} \setminus \{c\}$. 
Since elements of any set in $\tilde{\mcal{C}}$ is made up of leaves, $\mcal{C}$ is stable under compression operations. Since our compression operation respect intersection, we may replace $\mcal{B}$ with a stable family, while leaving $\mcal{C}$ the same. We now assume without loss of generality that $\mcal{B}$ is stable. Given $i\in [n]$, define $R_{i}:=\{a_{1},\ldots, a_{n}\}\setminus \{a_{i}\}$ and define 
\[
\mcal{H}_{i}= 
\{X \in \mcal{I}^{n}_{n-1}\ :\ a_{i}\in X,\  c\notin X,\ \textrm{ and } \ R_i \cap X\neq \emptyset\}
\cup R_{i} .
\] 
Using Theorem~\ref{thm:main}, it follows that sets $\mcal{H}_{i}$ are the only size $|\mcal{H}_{n,2}^{n-1}|$ stable maximum intersecting families of $(n-1)$-independent sets in $G_{n} \setminus \{c\}$. Proceeding as in the $r<n-1$ case, using $|\mcal{C}| \leq \binom{n}{n-2}$, Theorem~\ref{thm:main}, and Lemma~\ref{lem:tech} with $r=m=n-1$ yields
\begin{align}
| \mathcal{A} | = | \mathcal{B} | +| \mathcal{C} | \leq |\mcal{H}_{n,2}^{n-1}|+|\mcal{C}|&=\binom{n-1}{n-2}2^{n-2}-\binom{n-1}{n-2}+1 +\binom{n}{n-2}\nonumber\\
&=\binom{n-1}{r-1} 2^{r-1} + \binom{n-1}{r-2} +1
\label{eq:n-1}
\end{align}
 with equality if and only if $\mcal{B} \cong \mcal{H}_{1}$ and $|\mcal{C}|= \binom{n}{n-2}$. 
Assume that $\mcal{B} \cong \mcal{H}_{1}$ and $|\mcal{C}|= \binom{n}{n-2}$.
Since $\mcal{B}$ is stable, we may assume without loss of generality that $\mcal{B}=\mcal{H}_{1}$. This means that the set $X:=\{a_{1},a_{2}\} \cup \{b_{i} \ :\ i \in [3,n-1]\}$ must be in $\mcal{B}$. If we define $C:=\{c\} \cup \{a_{i} \ :\ i \in [3,n]\}$, then, as $C\cap X= \emptyset$, we must have $C \notin \mcal{C}$ which implies $|\mcal{C}| < \binom{n}{n-2}$. Thus, the inequality in \eqref{eq:n-1} is strict. This concludes the proof.
\end{proof}

We conclude this section by showing that the condition $r \le n-1$ in the above theorem is necessary.

\begin{prop}\label{prop:r=n}
Let $r=n$ and $n>2$. The depth-2 claw $G_n$ with $n$ leaves, is not $r$-EKR.
\end{prop}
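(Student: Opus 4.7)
The plan is to exhibit an explicit intersecting family in $\mcal{I}_n^n$ whose size exceeds that of the canonical intersecting family. When $r=n$, the canonical family has size $\binom{n-1}{n-1}2^{n-1} + \binom{n-1}{n-2} = 2^{n-1} + (n-1)$ (attained by fixing a leaf), so it suffices to build an intersecting family of size $2^{n-1}+n$.

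Label the vertices of $G_n$ with root $c$, middle vertices $b_1, \ldots, b_n$, and leaves $a_1, \ldots, a_n$, where $b_i$ is adjacent to both $c$ and $a_i$. Observe that an $n$-independent set $X$ either contains $c$, in which case $X = C_i := \{c\} \cup \{a_j : j \neq i\}$ for a unique $i \in [n]$; or else $X$ contains exactly one of $\{a_i, b_i\}$ for each $i \in [n]$, in which case I write $X = T_L$ where $L = \{i : a_i \in X\} \subseteq [n]$. The only disjoint pairs among $n$-independent sets not containing $c$ are the complementary pairs $\{T_L,\, T_{[n]\setminus L}\}$.

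I would define $\mcal F = \{C_1, \ldots, C_n\} \cup \mcal T$, where $\mcal T$ is a largest collection of sets $T_L$ with $|L| \geq 2$ containing no complementary pair. Intersection is then easy to verify: any two $C_i, C_j$ share $c$; each $C_i$ shares some $a_j$ with any $T_L$ because $|L|\geq 2$ forces $L \setminus\{i\}$ to be non-empty; and any two members of $\mcal T$ intersect by the non-complementarity condition.

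The main bookkeeping step is computing $|\mcal T|$. Among $L$ with $|L|\ge 2$, every $L$ with $|L| \in \{n-1, n\}$ has complement of size at most $1$ (outside the pool), so all such $n+1$ sets can be included. The remaining $L$'s, those with $2\le |L|\le n-2$, pair up into complementary pairs; there are $\tfrac12\sum_{k=2}^{n-2}\binom{n}{k} = 2^{n-1}-n-1$ such pairs, and we keep one from each. Summing gives $|\mcal T|= (n+1)+(2^{n-1}-n-1)=2^{n-1}$, and therefore $|\mcal F|=n+2^{n-1}>2^{n-1}+(n-1)$, which proves $G_n$ is not $n$-EKR. There is no real obstacle here beyond the small combinatorial count for $|\mcal T|$; the construction works uniformly for every $n\ge 3$.
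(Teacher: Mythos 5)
Your proof is correct and follows essentially the same strategy as the paper's: take all $n$ independent sets containing the root together with one set from each complementary pair of root-free $n$-independent sets (each chosen to contain at least two leaves), giving $2^{n-1}+n$ versus the canonical $2^{n-1}+(n-1)$. The paper merely makes one explicit choice of such a selection (the majority-of-leaves rule, with a tiebreak when $n$ is even), whereas you argue abstractly that a selection of size $2^{n-1}$ avoiding complementary pairs and small leaf-sets exists; the verification is otherwise identical.
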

\begin{proof}
Define $\mathcal{B}$ to be all $n$-independent sets in $G_n$ that do not contain $c$ and have at least 
$\lceil \frac{n+1}{2} \rceil$ vertices from $\{a_1, a_2, \dots, a_n\}$. If $n$ is even, $\mathcal{B}$ will also contain all the $n$-independent sets in $G_n$ that contain $a_1$ and $\frac{n}{2}-1$ vertices from $\{a_2, \dots, a_n\}$. In either case, $|\mathcal{B}| = \frac{2^n}{2}$. This is the largest possible size of $\mathcal{B}$, since if $B$ is any $n$-independent set in $G_n$ that does not contain $c$, exactly one of $B$ or $B^c-\{c\}$ is in $\mathcal{B}$.  Define $\mathcal{C}$ to be all $n$-independent sets that include $c$. Then $\mathcal{B} \cup \mathcal{C}$ is intersecting and has size
\[
\frac{2^n}{2} + \binom{n}{n-1} = 2^{n-1} + n.
\]
In this case the size of a canonical set is $2^{n-1} + (n-1)$.
\end{proof}

\section{Further Work}
\label{sec:Further}

As Holroyd, Spencer, and Talbot~\cite{HST05} showed that a graph that is the union of $n \geq r$ complete graphs (each of order at least $2$) is $r$-EKR, a natural question is whether our Hilton-Milner Theorem-type result, Theorem~\ref{thm:main}, would extend to graphs that are the union of cliques of different sizes (all of order at least $2$). Here the canonical $r$-independent sets would be the set of all $r$-independent sets that contain a fixed vertex from a smallest clique. 

Another possible question is to ask whether, for any $k \geq 3$, the graph formed by taking $n$ copies of $K_k$ and attaching a ``root'' vertex to exactly one vertex from each copy of $K_k$ would be $r$-EKR for any $r \leq n-1$?  Taking $k = 2$ would give a depth-2 claw with $n$ leaves.  In these graphs, a vertex with the most $r$-independent sets containing it is always either the root or a vertex not adjacent to the root, but it is not clear which of these two canonically intersecting sets have the maximum size.   The number of $r$-independent sets containing the root is $\binom{n}{r-1}(k-1)^{r-1}$, while the number of $r$-independent sets containing a fixed vertex not adjacent to the root is $\binom{n-1}{r-2}(k-1)^{r-2} + \binom{n-1}{r-1}k^{r-1}$.  Consider the difference of these two quantities:
\begin{align*}
\binom{n-1}{r-2}&(k-1)^{r-2} + \binom{n-1}{r-1}k^{r-1}-\binom{n}{r-1}(k-1)^{r-1} \\
	&=\binom{n-1}{r-2}\frac{1}{r-1}\left((r-1)(k-1)^{r-2} + (n-r+1)k^{r-1} - n(k-1)^{r-1} \right).
\end{align*}
For example, in the case that $r = 2$, 
\begin{align*}
(r-1)(k-1)^{r-2} +& (n-r+1)k^{r-1} - n(k-1)^{r-1} \\
	&=1+(n-1)k - n(k-1)\\
	&=n+1-k
\end{align*}
which is negative when $k > n+1$ and non-negative for $k \leq n + 1$.  In general, one can show that for $k \leq \frac{n-2}{\ln(n/2)}$, then for all $r \in [2, n-1]$, a vertex not adjacent to the root will be contained in the maximum number of $r$-independent sets.  Further, for any $\varepsilon > 0$, if $\frac{(1+\varepsilon)(n-2)}{\ln(n/2)} \leq k \leq n+1$, then for some values of $r$, the root has the most $r$-independent sets containing it and for others it is a vertex not adjacent to the root that has the most $r$-independent sets.  When $k > n+1$, then for all $r \in [2, n-1]$, the root has the most $r$-independent sets.

Even in the case $k = 3$, where a vertex not adjacent to the root will have the most $r$-independent sets for any $n \geq 3$ and $2 \leq r \leq n-1$, our proof technique does not extend directly for all values of $r$.

It may be possible to apply the method adapted here from Deza and Frankl~\cite{DF22years} and Liao, Lv, Cao and Lu in~\cite{liao2024hiltonmilnertheoremkmultisets} to prove a Hilton-Milner-type theorem for other objects that have a natural projection to sets.

\section*{Acknowledgements}

The authors are all indebted to the support of the Pacific Institute for Mathematical Sciences (PIMS), through the establishment of the Collaborative Research Group on Movement and Symmetry in Graphs. In connection with this CRG, this paper has the report identifier PIMS-20251123-CRG36.

\bibliographystyle{plain}
\bibliography{bibtex.bib}                        
\end{document}